\newtheorem{thm}{Theorem}[section]
\newtheorem{cor}[thm]{Corollary}
\newtheorem{lem}[thm]{Lemma}
\newtheorem{prop}[thm]{Proposition}
\newtheorem{thmintro}{Theorem}
\newtheorem{propintro}[thmintro]{Proposition}
\newcommand{\Z}{\mathbb Z}
\newcommand{\Q}{\mathbb Q}
\newcommand{\R}{\mathbb R}
\newcommand{\C}{\mathbb C}
\newcommand{\mf}{\mathfrak}
\newcommand{\mc}{\mathcal}
\newcommand{\mb}{\mathbf}
\newcommand{\mh}{\mathbb}
\def\Irr{{\rm Irr}}
\newcommand{\mr}{\mathrm}
\newcommand{\enuma}[1]{\begin{enumerate}[\textup{(}a\textup{)}] {#1} \end{enumerate}}
\newcommand{\Fr}{\mathrm{Frob}}
\newcommand{\Sc}{\mathrm{sc}}
\newcommand{\ad}{\mathrm{ad}}
\newcommand{\cusp}{\mathrm{cusp}}
\newcommand{\nr}{\mathrm{nr}}
\newcommand{\Wr}{\mathrm{wr}}
\newcommand{\cpt}{\mathrm{cpt}}
\newcommand{\Rep}{\mathrm{Rep}}
\newcommand{\af}{\mathrm{aff}}
\def\tor{{\rm tor}}
\newcommand{\unip}{\mathrm{unip}}
\newcommand{\der}{\mathrm{der}}
\newcommand{\matje}[4]{\left(\begin{smallmatrix} #1 & #2 \\ 
#3 & #4 \end{smallmatrix}\right)}
\begin{document}

\title{A local Langlands correspondence \\ for unipotent representations}
\date{\today}
\thanks{The author is supported by a NWO Vidi grant "A Hecke algebra approach to the 
local Langlands correspondence" (nr. 639.032.528).}
\subjclass[2010]{Primary 22E50; Secondary 11S37, 20G25}
\maketitle

\ 
\begin{center}
{\Large Maarten Solleveld} \\[1mm]
IMAPP, Radboud Universiteit \\
Heyendaalseweg 135, 6525AJ Nijmegen, the Netherlands \\
email: m.solleveld@science.ru.nl \\[7mm] \
\end{center}

\begin{abstract}
Let $G$ be a connected reductive group over a non-archimedean local field $K$,
and assume that $G$ splits over an unramified extension of $K$. We establish
a local Langlands correspondence for irreducible unipotent representations of $G$.
It comes as a bijection between the set of such representations and the collection
of enhanced L-parameters for $G$, which are trivial on the inertia subgroup of
the Weil group of $K$. We show that this correspondence has many of the expected 
properties, for instance with respect to central characters, tempered representations,
the discrete series, cuspidality and parabolic induction.

The core of our strategy is the investigation of affine Hecke algebras on both sides of 
the local Langlands correspondence. When a Bernstein component of $G$-representations 
is matched with a Bernstein component of enhanced L-parameters, we prove a comparison 
theorem for the two associated affine Hecke algebras.

This generalizes work of Lusztig in the case of adjoint $K$-groups.\\[8mm]
\end{abstract}

\tableofcontents

\newpage

\section*{Introduction}

Let $K$ be a non-archimedean local field and let $\mc G$ be a connected reductive $K$-group.
We consider smooth, complex representations of the group $G = \mc G (K)$. An irreducible 
smooth $G$-representation $\pi$ is called unipotent if there exists a parahoric subgroup
$P_{\mf f} \subset G$ and an irreducible $P_{\mf f}$-representation $\sigma$, which is
inflated from a cuspidal representation of the finite reductive quotient of $P_{\mf f}$,
such that $\pi |_{P_{\mf f}}$ contains $\sigma$. These notions behave best when $\mc G$
splits over an unramified extension of $K$, so that assume that in the introduction (and
in most of the paper).

We will exhibit a local Langlands correspondence (LLC) for all irreducible unipotent
representations of such reductive $p$-adic groups. This generalizes results of Lusztig
\cite{LusUni1,LusUni2} for simple adjoint $K$-groups.

Let us make the statement more precise, referring to Section \ref{sec:2} for the details.
We denote the set of isomorphism classes of irreducible unipotent $G$-representations by 
$\Irr_\unip (G)$. As usual, we consider Langlands parameters 
\[
\phi : \mb W_K \times SL_2 (\C) \longrightarrow {}^L G  = G^\vee \rtimes \mb W_K .
\]
As component group of we take the group $\mc S_\phi$ from \cite{Art,HiSa,ABPSLLC}.
An enhancement of $\phi$ is an irreducible representation $\rho$ of $\mc S_\phi$,
and there is a $G$-relevance condition for such enhancements. We let $\Phi_e (G)$
be the set of $G^\vee$-association classes of $G$-relevant enhanced L-parameters
$\mb W_K \times SL_2 (\C) \to {}^L G$.

Let $\mb I_K$ be the inertia subgroup of the Weil group $\mb W_K$. An enhanced
L-parameter $(\phi,\rho)$ is called unramified if $\phi (w) = (1,w)$ for all
$w \in \mb I_K$. We denote the resulting subset of $\Phi_e (G)$ by $\Phi_{\nr,e}(G)$.

\begin{thmintro}\label{thm:A} \textup{(see Section \ref{sec:LLC})} \\
There exists a bijection
\[
\begin{array}{ccc}
\Irr_\unip (G) & \longrightarrow & \Phi_{\nr,e}(G) \\
\pi & \mapsto & (\phi_\pi, \rho_\pi) \\
\pi (\phi,\rho) & \text{\rotatebox[origin=c]{180}{$\mapsto$}} & (\phi,\rho)
\end{array}
\]
with the following properties.
\begin{enumerate}[(a)]
\item Compatibility with direct products of reductive $K$-groups.
\item Equivariance with respect to the canonical actions of the group $X_\Wr (G)$ of
weakly unramified characters of $G$.
\item The central character of $\pi$ equals the character of $Z(G)$ determined by $\phi_\pi$.
\item $\pi$ is tempered if and only if $\phi_\pi$ is bounded.
\item $\pi$ is essentially square-integrable if and only if $\phi_\pi$ is discrete.
\item $\pi$ is supercuspidal if and only if $(\phi_\pi, \rho_\pi)$ is cuspidal.
\item The local Langlands correspondences for the Levi subgroups of $G$ and the cuspidal
support maps form a commutative diagram
\[
\begin{array}{ccc}
\Irr_\unip (G) & \longrightarrow & \Phi_{\nr,e}(G) \\
\downarrow & & \downarrow \\
\bigsqcup_L \Irr_{\cusp,\unip}(L) \big/ (N_G (L)/L)  & 
\longrightarrow & \bigsqcup_L \Phi_{\nr,\cusp}(L) \big/ (N_G (L) / L) 
\end{array} .
\]
Here $L$ runs over a collection of representatives for the conjugacy classes of Levi subgroups 
of $G$. See Section \ref{sec:2} for explanation of the notation in the diagram.
\item Suppose that $P = L U$ is a parabolic subgroup of $G$ and that 
$(\phi,\rho^L) \in \Phi_{\nr,e}(L)$ is bounded. Then the normalized parabolically inducted 
representation $I_P^G \pi (\phi,\rho^L)$ is a direct sum of representations $\pi (\phi,\rho)$, \
with multiplicities $[\rho^L : \rho]_{\mc S_{\phi}^L}$.
\item Compatibility with the Langlands classification for representations of reductive groups
and the Langlands classification for enhanced L-parameters.
\end{enumerate}
\end{thmintro}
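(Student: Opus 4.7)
The natural approach, flagged already in the abstract, is to compare the two sides Bernstein-block by Bernstein-block through affine Hecke algebras. On the representation side, Morris and Lusztig attach to each unipotent Bernstein component of $G$ an affine Hecke algebra whose simple modules parametrize $\Irr_\unip(G)$ inside that block. On the L-parameter side, I would invoke the Aubert--Moussaoui--Solleveld machinery applied to the complex group $G^\vee \rtimes \langle \Fr \rangle$ and its unramified locus: this produces a matching Bernstein-type decomposition of $\Phi_{\nr,e}(G)$, with an affine Hecke algebra attached to each block via generalized Springer theory. The plan is to identify these two affine Hecke algebras (parameters and any 2-cocycle twist) in each block, and then read off the bijection of irreducible modules on both sides.

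\textbf{Reduction to adjoint simple groups.} Property (a) lets me split $G$ along direct-product factors, reducing to the case where $\mc G$ is almost $K$-simple, at which point restriction of scalars further reduces to the absolutely simple case. For $G$ adjoint and simple one has the work of Lusztig \cite{LusUni1,LusUni2}, which already provides the bijection and most of its properties, phrased in terms of the unipotent variety of $G^\vee$ with equivariant local systems. The passage to arbitrary isogeny forms is then a question of analyzing how both sides transform under an isogeny $G_\Sc \to G \to G_\ad$: on the L-parameter side, replacing $G_\ad$ by $G$ replaces $G^\vee_\Sc$ by a group whose center is related to the fundamental group of $G$, which changes the component group $\mc S_\phi$ in a controlled way and introduces weakly unramified twists; on the representation side, Clifford theory along the isogeny dictates how blocks split or merge and how endomorphism algebras change.

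\textbf{Checking the properties (a)--(i).} Given the Hecke-algebra matching, properties (c)--(e) and (i) should follow from standard dictionaries: central characters are read off the infinitesimal character of the Hecke module; temperedness corresponds to bounded parameters via matching of the Plancherel measure/residual structure; discreteness is characterized uniformly on both sides; and the Langlands classification on the $L$-parameter side (constructed via standard triples of bounded-plus-positive data) matches the one for $G$ once the matching is equivariant for tempered blocks and twists. Property (g) is essentially built into the Bernstein formulation: the block decomposition on both sides is indexed by inertial equivalence classes of cuspidal pairs, and Lusztig's parametrization of unipotent supercuspidals by cuspidal pairs $(\mc C, \mc E)$ on the dual side is precisely the cuspidality condition on $(\phi,\rho)$, so (f) comes along. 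Property (h), the multiplicity formula for parabolic induction of bounded parameters, falls out of the standard description of tempered modules of an affine Hecke algebra as parabolically induced from discrete series once the Hecke algebras on both sides are matched. Property (b) requires care with normalizations: both $X_\Wr(G)$ and its L-parameter analogue act on Bernstein components, and the Hecke-algebra isomorphisms must be chosen so as to intertwine these actions.

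\textbf{Main obstacle.} The hard part will be the block-wise comparison of the two affine Hecke algebras in full generality, including both the $q$-parameters (which on the representation side come from Lusztig's analysis of unipotent cuspidals of finite reductive quotients of parahorics, and on the L-parameter side from the generalized Springer correspondence for the possibly disconnected complex group $Z_{G^\vee}(\phi |_{\mb W_K}) \rtimes \langle \Fr \rangle$) and the 2-cocycles twisting the group-algebra factors when $\mc S_\phi$ is nonabelian. Matching these invariants case-by-case, especially for the exceptional types and for non-adjoint isogeny forms where extra components of $\mc S_\phi$ appear, is where the bulk of the technical effort will lie. Once this comparison is achieved, the bijection and the properties (a)--(i) follow by essentially formal arguments from the representation theory of affine Hecke algebras.
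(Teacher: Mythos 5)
Your overall architecture (block-by-block comparison of the Morris--Lusztig Hecke algebras with the Aubert--Moussaoui--Solleveld Hecke algebras for enhanced unramified parameters, then formal consequences for the properties) is indeed the paper's strategy, but there is a genuine gap in how you propose to get from the adjoint simple case to general $\mc G$. You want to split $G$ into almost $K$-simple factors using (a) and then transfer Lusztig's adjoint result to arbitrary isogeny forms by Clifford theory along $G_\Sc \to G \to G_\ad$. The first step already fails for a general reductive group (it is only an almost-direct product with a central torus, not a direct product; only $\mc G_\ad$ decomposes as a product of simple groups), and the second step is essentially the functoriality along isogenies that the paper explicitly leaves open as ``cumbersome'': the map $\mc G (K) \to \mc G_\ad (K)$ is not surjective, restriction of unipotent (super)cuspidals along it involves the cokernel acting on Bernstein components, and on the dual side the enhancements, the relevance condition via $\zeta_{\mc G}$, and the weakly unramified twists are not pinned down by the adjoint case alone. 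Descending the whole correspondence this way would force you to re-prove, by hand, exactly the normalization questions that make the non-adjoint case hard.

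The missing ingredient is the supercuspidal unipotent LLC of Feng--Opdam--Solleveld (Theorem \ref{thm:4.1}), established for arbitrary isogeny forms via formal degrees and the HII conjecture, together with the matching of Levi subgroups (Corollary \ref{cor:1.3}) and of Bernstein components (Proposition \ref{prop:4.2}). The paper anchors each block at the cuspidal level by this result: it identifies the tori $\mf s_L \leftrightarrow \mf s_L^\vee$ and, using $N_G(L)/L \cong N_{G^\vee}(L^\vee \rtimes \mb W_K)/L^\vee$, the finite groups $W_{\mf s}\cong W_{\mf s^\vee}$, directly for general $G$. Only the comparison of the remaining Hecke-algebra data (label functions, $q$-parameters, root systems) is reduced to the adjoint case, and this reduction is painless precisely because the algebras for $G$ and $G_\ad$ on both sides differ only in the underlying lattices/tori (Lemmas \ref{lem:4.3} and \ref{lem:4.4}), not in Weyl groups or parameters; Weil restriction then brings one to the absolutely simple adjoint case where Lusztig's theorems apply (Lemma \ref{lem:4.6}). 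Without this cuspidal-level anchor your plan has no way to fix the correspondence (even up to weakly unramified twists) for non-adjoint forms, and your isogeny/Clifford-theoretic descent would in effect have to reprove the FOS results. The verification of the individual properties (tempered, discrete, cuspidal support, Langlands classification, the multiplicity formula (h)) does then proceed along the formal lines you indicate, though (c) is not a purely formal ``infinitesimal character'' statement: the paper must reduce it to the supercuspidal case and compare with Langlands' construction of $\chi_\phi$ via a $z$-extension.
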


Since there are so many properties, one may wonder to what extent the LLC is characterized by them.
First we note that $\phi_\pi$ is certainly not uniquely determined by $\pi$ alone. Namely,
in many cases one can twist the LLC by a character of $\mc S_{\phi_\pi}$ and retain all the above
properties. 

The obvious next question is: do the above conditions determine the map 
\[
\Irr_\unip (G) \to \Phi_\nr (G) : \pi \mapsto \phi_\pi
\]
uniquely? Again the answer is no, for (sometimes) one can still adjust the map by the action of
a weakly unramified character of $G$. 

Then one may enquire whether $\pi \mapsto \phi_\pi$ is canonical up to twists by elements of
$X_\Wr (G)$. That is the case, and there are two ways to see it.
\begin{itemize}
\item With formal degrees \cite{FOS,Opd18}. Namely, 
\[
\Irr_\unip (G) \to \Phi_\nr (G) / X_\Wr (G) : \pi \mapsto X_\Wr (G) \phi_\pi 
\]
is the unique map which makes the Hiraga--Ichino--Ikeda conjectures \cite{HII} true in the 
supercuspidal case \cite[\S 16]{FOS} and makes the HII conjectures ``almost true" in general
\cite[Theorem 3.5.1]{Opd18}.
\item With functoriality and Lusztig's work \cite{LusUni1,LusUni2}. 
Although Lusztig does not make it explicit, he indicates in \cite[\S 6.6]{LusUni1} that his 
LLC is canonical (up to weakly unramified twists) when $\mc G$ is adjoint and simple. 
Our general case is related to the adjoint (simple) case by functoriality, 
which implies that $\pi \mapsto \phi_\pi$ is essentially unique.
\end{itemize}
\vspace{3mm}
Now we provide an overview of the setup and the general strategy of the paper. Foremostly,
everything runs via affine Hecke algebras. Commonly an affine Hecke algebra is associated to
one Bernstein component in $\Irr (G)$. To get them into play on the Galois side of the LLC,
one first needs a good notion of a Bernstein component there. That was achieved in \cite{AMS1},
by means of a cuspidal support map for enhanced L-parameters. (To this end the enhancements are 
essential. without them one cannot even define cuspidality of L-parameters.) In \cite{AMS1} the
cuspidal support of an enhanced L-parameter for $G$ is given as the $G^\vee$-association class
of a cuspidal L-parameter for a $G$-relevant Levi subgroup of ${}^L G$. For later comparison,
we need to translate this to a cuspidal L-parameter for a Levi subgroup of $G$, unique up to
$G$-conjugation. That is the purpose of the next result (which we actually prove in greater
generality).

\begin{propintro}\label{prop:B} \textup{(see Corollary \ref{cor:1.3})} \\
There exists a canonical bijection between:
\begin{itemize}
\item the set of $G$-conjugacy classes of Levi subgroups of $G$;
\item the set of $G^\vee$-conjugacy classes of $G$-relevant Levi
subgroups of ${}^L G$.
\end{itemize}
\end{propintro}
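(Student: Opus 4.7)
The plan is to identify both sides of the bijection with the same combinatorial data — $\mb W_K$-stable subsets of simple roots subject to a rationality condition — via the canonical duality between the based root data of $\mc G$ and $G^\vee$.

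First, I would fix a maximal $K$-split torus $\mc S \subseteq \mc G$ with centralizer $\mc T = Z_{\mc G}(\mc S)$ a maximal $K$-torus, together with a minimal $K$-parabolic $\mc P_0 \supseteq \mc T$. Write $\Gamma_K = \mr{Gal}(K^s/K)$. These choices single out a $\Gamma_K$-stable basis $\Delta$ of the absolute root system $\Phi = \Phi(\mc G_{K^s}, \mc T_{K^s})$, together with the basis of simple relative roots in $\Phi(\mc G, \mc S)$ obtained by restriction to $\mc S$. Standard Borel--Tits theory identifies $G$-conjugacy classes of Levi subgroups of $G$ with $\Gamma_K$-stable subsets $I \subseteq \Delta$ that are saturated, meaning $I \supseteq \{\alpha \in \Delta : \alpha|_{\mc S} = 0\}$; equivalently, with arbitrary subsets of the simple relative roots.

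On the dual side, I would fix a pinning of $G^\vee$ preserved by the $\mb W_K$-action used to define ${}^L G$. This yields a canonical bijection $\Delta \leftrightarrow \Delta^\vee$ intertwining the $\Gamma_K$- and $\mb W_K$-actions. A Levi subgroup of ${}^L G$ has the form $L^\vee \rtimes \mb W_K$ for a $\mb W_K$-stable Levi $L^\vee \subseteq G^\vee$, and its $G^\vee$-conjugacy class is determined by a $\mb W_K$-stable subset $J \subseteq \Delta^\vee$ modulo the relative Weyl group action. Unwinding the definition of $G$-relevance from \cite{AMS1}, $L^\vee \rtimes \mb W_K$ is $G$-relevant exactly when it is the Levi of some $G$-relevant parabolic of ${}^L G$, which happens precisely when $J$ corresponds under $\Delta \leftrightarrow \Delta^\vee$ to a saturated subset of $\Delta$ in the sense above. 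The desired bijection then sends the conjugacy class of a standard Levi $\mc L_I$ to that of $L^\vee_{I^\vee} \rtimes \mb W_K$, where $I \leftrightarrow I^\vee$. Canonicity follows because $(\mc S, \mc P_0)$ and the pinning of $G^\vee$ are each unique up to their respective rational Weyl groups, whose actions are quotiented out upon passing to conjugacy classes; that the two assignments are mutual inverses is immediate from the combinatorial identification.

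The main obstacle will be the precise matching of the $G$-relevance condition from \cite{AMS1} with the saturation condition from Borel--Tits theory, since the former is phrased via parabolic subgroups of ${}^L G$ while the latter lives in the relative root combinatorics of $\mc G$. This amounts to carefully comparing the two definitions and verifying that the rational Weyl group on the $G$-side coincides with the $\mb W_K$-fixed Weyl group action on the dual side; the argument is conceptually straightforward but notationally delicate. One further has to confirm that nothing in the argument actually requires the unramified hypothesis, so that the result holds in the greater generality claimed.
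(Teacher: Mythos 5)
Your proposal breaks down at the very first combinatorial identification. It is \emph{not} true that $G$-conjugacy classes of Levi $K$-subgroups are in bijection with saturated $\Gamma_K$-stable subsets $I\subseteq\Delta$ (equivalently, with arbitrary subsets of the simple relative roots): that statement is the Borel--Tits classification of conjugacy classes of \emph{parabolic} $K$-subgroups. Distinct subsets of simple roots frequently yield conjugate Levi subgroups --- already in $SL_3$ the two maximal standard Levis, attached to $\{\alpha_1\}$ and $\{\alpha_2\}$, are conjugate, and in $GL_n$ the Levis of types $(a,b)$ and $(b,a)$ are conjugate. The correct parametrization is by subsets of $(\Delta\setminus\Delta_0)/\mu_{\mc B}(\Gamma_K)$ \emph{up to association under the relative Weyl group} $W(\mc G,\mc S)$; proving this (Lemma \ref{lem:1.1} in the paper) is not a quotable fact but an argument in its own right. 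Note also that as written your two sides do not even match: you quotient by a Weyl group action on the dual side (``modulo the relative Weyl group action'') but not on the group side, so the ``immediate'' mutual inverses cannot be checked.

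Once the correct parametrizations are in place, the remaining work --- which you defer to the end as ``notationally delicate'' --- is in fact the substantive core of the proof. One must show that $G^\vee$-conjugacy of relevant standard L-Levi subgroups $L_I^\vee\rtimes\Gamma_K$ is governed by the group $\mr{Stab}_{W(G^\vee,T^\vee)^{\Gamma_K}}(\Z\Delta_0^\vee)\big/ W(L_{\Delta_0}^\vee,T^\vee)^{\Gamma_K}$ acting on $(\Delta^\vee\setminus\Delta_0^\vee)/\Gamma_K$ (this requires an argument with Borel subgroups and the minimal relevant L-Levi, Lemma \ref{lem:1.2} in the paper, and is more than ``the class is determined by $J$ modulo the relative Weyl group''), and then invoke the canonical isomorphism of this group with $W(\mc G,\mc S)$ from \cite[Proposition 3.1 and (43)]{ABPSLLC} (equation \eqref{eq:1.3}) to match the two orbit spaces. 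Your identification of the relevance condition with $\Delta_0\subset I$ (saturation) is fine and agrees with \eqref{eq:1.5}, and your remark that no unramifiedness is needed is correct, but without the association-up-to-$W(\mc G,\mc S)$ statements on both sides and the comparison of the two acting groups, the proposed bijection is neither well defined nor shown to be injective.
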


In Section \ref{sec:2} we show how one can associate, to every Bernstein component
$\Phi_e (G)^{\mf s^\vee}$ of enhanced L-parameters for $G$, an affine Hecke algebra 
$\mc H (\mf s^\vee,\vec{v})$. Here the array of complex parameters $\vec{v}$ can be chosen
freely. This relies entirely on \cite{AMS3}. The crucial properties of this algebra are:
\begin{itemize}
\item the irreducible representations of $\mc H (\mf s^\vee,\vec{v})$ are canonically
parametrized by $\Phi_e (G)^{\mf s^\vee}$ (at least when the parameters are chosen in
$\R_{>0}$);
\item the maximal commutative subalgebra of $\mc H (\mf s^\vee,\vec{v})$ (coming from the
Bernstein presentation) is the ring of regular functions on the complex torus $\mf s_L^\vee$
(which forms a cuspidal Bernstein component of enhanced L-parameters for a Levi subgroup
$L$ of $G$).
\end{itemize}
Only after that we really turn to unipotent $G$-representations. From work of Morris and
Lusztig \cite{Mor1,Mor2,LusUni1} it is known that every Bernstein block
Rep$(G)_{\mf s}$ of smooth unipotent $G$-representations admits a type, and that it is 
equivalent to the module category of an affine Hecke algebra. In the introduction, we will
denote that algebra simply by $\mc H_{\mf s}$. 
In Section \ref{sec:3} we work out the Bernstein presentation of $\mc H_{\mf s}$, that is,
we make the underlying torus and Weyl group explicit in terms of $\mf s$.

Armed with a good understanding of the affine Hecke algebras on both sides of the LLC,
we set out to compare them. Here we make good use of a local Langlands correspondence for
supercuspidal unipotent representations, which was established in \cite{FOS}.
Together with Proposition \ref{prop:B} that gives rise to:

\begin{propintro}\label{prop:C}
There exists a bijection, canonical up to twists by weakly unramified characters, between:
\begin{itemize}
\item the set $\mf{Be} (G)_\unip$ of Bernstein components in $\Irr (G)$ consisting of
unipotent representations;
\item the set $\mf{Be}^\vee (G)_\nr$ of Bernstein components in $\Phi_e (G)$ consisting
of unramified enhanced L-parameters.
\end{itemize}
\end{propintro}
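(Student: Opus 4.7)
The plan is to reduce the bijection to the supercuspidal case and then pass to inertial equivalence classes.

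First I would unpack the two sides. A Bernstein component $\mf s \in \mf{Be}(G)_\unip$ is an inertial equivalence class $[L,\sigma]_G$, where $L$ is a Levi subgroup of $G$, $\sigma$ is an irreducible supercuspidal unipotent representation of $L$, and the class is taken up to $G$-conjugation and twists of $\sigma$ by $X_\nr(L)$. By \cite{AMS1}, a Bernstein component $\mf s^\vee \in \mf{Be}^\vee(G)_\nr$ is similarly an inertial class of pairs $(L^\vee,(\phi_L,\rho_L))$ where $L^\vee$ is a $G$-relevant Levi of ${}^LG$, $(\phi_L,\rho_L)$ is a cuspidal enhanced L-parameter for $L^\vee$ which is unramified, and the class is taken up to $G^\vee$-conjugation and the canonical action of the torus of unramified twists on $\phi_L$.

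Next I would combine two ingredients. Proposition~\ref{prop:B} gives a canonical bijection between $G$-conjugacy classes of Levi subgroups $L$ of $G$ and $G^\vee$-conjugacy classes of $G$-relevant Levi subgroups $L^\vee$ of ${}^LG$; fix a set of representatives $L \leftrightarrow L^\vee$. For each such $L$, the local Langlands correspondence for supercuspidal unipotent representations proved in \cite{FOS} provides a bijection
\[
\Irr_{\cusp,\unip}(L) \; \longleftrightarrow \; \Phi_{\nr,\cusp}(L),
\]
canonical only up to twists by elements of $X_\Wr(L)$. The definition of the map is then: send $\mf s = [L,\sigma]_G$ to the inertial class of $(\phi_{L,\sigma},\rho_{L,\sigma})$, the \cite{FOS}-parameter of $\sigma$, viewed as an enhanced L-parameter for $L^\vee \subset {}^LG$. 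Going backwards, given a representative $(L^\vee,(\phi_L,\rho_L))$ for $\mf s^\vee$, send it to the inertial class of $(L,\sigma)$ with $\sigma$ the preimage of $(\phi_L,\rho_L)$ under \cite{FOS}.

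The rest of the proof is equivariance bookkeeping. I would need to check that the \cite{FOS} bijection for $L$ intertwines:
\begin{enumerate}[(i)]
\item the action of $X_\nr(L)$ by twist on $\Irr_{\cusp,\unip}(L)$ with the corresponding action on $\Phi_{\nr,\cusp}(L)$ by unramified twists of $\phi_L$, so that the map descends to inertial classes on both sides;
\item the action of $N_G(L)/L$, so that the map descends to $G$-, respectively $G^\vee$-, conjugacy.
\end{enumerate}
Both properties are among the compatibilities established in \cite{FOS}, once one identifies the Levis via Proposition~\ref{prop:B}; surjectivity is immediate since every unramified cuspidal enhanced L-parameter is the image of some $\sigma$ by \cite{FOS}, and injectivity follows from the injectivity of the supercuspidal LLC for $L$ together with the fact that inertially equivalent pairs on the Galois side come from the same $L$ up to conjugation.

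The main obstacle is point (i): one has to verify that the unramified-character action on the $G$-side corresponds exactly to the natural unramified-twist action on cuspidal L-parameters described in \cite{AMS1}. This is where the caveat ``canonical up to twists by weakly unramified characters'' enters: the indeterminacy in the supercuspidal LLC of \cite{FOS} is precisely an $X_\Wr(L)$-ambiguity, and since weakly unramified characters act trivially on inertial classes on the $G$-side but not on individual L-parameters, this ambiguity survives (and no more than this) when passing to $\mf{Be}(G)_\unip$ and $\mf{Be}^\vee(G)_\nr$.
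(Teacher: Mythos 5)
This is essentially the paper's own proof (Proposition \ref{prop:4.2}.a): Levis are matched via Corollary \ref{cor:1.3}, the supercuspidal correspondence of Theorem \ref{thm:4.1} is applied to each Levi $L$, and one descends first through $X_\nr (L)$-orbits using its $X_\Wr$-equivariance and then through conjugation using the natural isomorphism $N_G (L)/L \cong N_{G^\vee}(L^\vee \rtimes \mb W_K)/L^\vee$ combined with the equivariance of Theorem \ref{thm:4.1} under $\mb W_K$-automorphisms of the absolute root datum --- precisely your steps (i) and (ii). Only your closing justification of the caveat is slightly off: weakly unramified characters need not act trivially on inertial classes on the group side (only unramified ones do), but this is immaterial, since the $X_\Wr (L)$-ambiguity in Theorem \ref{thm:4.1} in any case changes the induced map on Bernstein components only by the action of weakly unramified twists, which is exactly the indeterminacy allowed in the statement.
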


When $\mf s \in \mf{Be}(G)_\unip$ corresponds to $\mf s^\vee \in \mf{Be}^\vee (G)_\nr$ 
via Proposition \ref{prop:C}, we show that the two associated Hecke algebras
$\mc H_{\mf s}$ and $\mc H (\mf s^\vee, \vec{v})$ have isomorphic Weyl groups, and that
the underlying tori are isomorphic (via the LLC on the cuspidal level). By reduction to
the case of adjoint groups, which was settled in \cite{LusUni1,LusUni2}, we prove that
the labels of these two affine Hecke algebras match. That leads to:

\begin{thmintro}\label{thm:D}
When $\mf s$ corresponds to $\mf s^\vee$ via Proposition \ref{prop:C}, 
$\mc H (\mf s^\vee,\vec{v})$ is canonically isomorphic to $\mc H_{\mf s}$, 
for an explicit choice of the parameters $\vec v$.
\end{thmintro}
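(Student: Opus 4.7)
The plan is to construct the isomorphism by matching the four pieces of data that specify an affine Hecke algebra in Bernstein presentation: the underlying torus, the finite Weyl group acting on it, the labels attached to the reduced root system, and (when present) the 2-cocycle twisting the extended affine Weyl group. Both $\mathcal{H}_{\mathfrak{s}}$ and $\mathcal{H}(\mathfrak{s}^\vee,\vec{v})$ have been given in Bernstein presentation (for $\mathcal{H}(\mathfrak{s}^\vee,\vec{v})$ via \cite{AMS3} and Section \ref{sec:2}; for $\mathcal{H}_{\mathfrak{s}}$ via Section \ref{sec:3}), so it suffices to exhibit a bijection on each of these ingredients and check compatibility.

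For the torus, I would invoke Proposition \ref{prop:C} together with the cuspidal LLC of \cite{FOS}: the cuspidal Bernstein datum on the $G$-side is a pair $(L,\sigma)$ with $\sigma$ a supercuspidal unipotent representation of $L$, and the cuspidal LLC identifies the unramified twists $X_{\mathrm{nr}}(L)\sigma$ with the torus $\mathfrak{s}_L^\vee$ of unramified twists of the underlying cuspidal L-parameter of $L$. This delivers a canonical isomorphism of tori (up to the $X_{\mathrm{wr}}(G)$-twist built into Proposition \ref{prop:C}). For the finite Weyl group, on the $G$-side it is the stabilizer inside $N_G(L)/L$ of the inertial class of $\sigma$, while on the Galois side it is the analogous stabilizer for the L-parameter; the cuspidal LLC being $N_G(L)/L$-equivariant, these two groups are canonically identified, and the identification of root data follows because the roots on both sides are read off from the same rational structure on the torus.

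The real work is in matching the labels $\vec{v}$ on the reduced root system. Here the strategy is reduction to the adjoint simple case, which is the content of \cite{LusUni1,LusUni2}: Lusztig explicitly computed, for each simple adjoint $K$-group, both the Hecke algebra attached to each Bernstein block of unipotent representations (with its labels) and the Hecke algebra attached to each unramified Bernstein component of enhanced L-parameters (the latter via the geometric construction underlying \cite{AMS3}), and showed that the two systems of labels agree. I would then bootstrap this to arbitrary $G$ splitting over an unramified extension by decomposing along the isogeny $\mathcal{G}_{\mathrm{sc}} \to \mathcal{G}_{\mathrm{der}} \hookrightarrow \mathcal{G}$: on the one hand, passing from $G$ to the adjoint quotient $G_{\mathrm{ad}}$ of its derived group preserves the root subsystems and the attached Hecke algebra labels on the $G$-side (the change affects only the lattice and the $R$-group, not the labels on simple roots), and on the other hand the analogous statement holds on the Galois side by the functorial behaviour of the construction in \cite{AMS3} with respect to central isogenies. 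Once the labels on each simple factor are forced to agree with Lusztig's tables, the two algebras have identical Bernstein parameters and are therefore isomorphic.

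The hard part will be the last reduction step, namely checking that when we pass from the adjoint simple case to a general $\mathcal{G}$ which splits over an unramified extension, both sides change in exactly the same controlled way. On the $G$-side one must keep track of how the type of Morris--Lusztig, the resulting Iwahori--Matsumoto presentation, and the Bernstein lattice transform under passing to the derived group and then quotienting by the center, as well as any 2-cocycle that appears because the extended affine Weyl group on the $G$-side need not be a semidirect product. On the Galois side one must show that the $\mathcal{H}(\mathfrak{s}^\vee,\vec{v})$ produced by the general procedure of \cite{AMS3} specializes correctly under the dual isogeny ${}^L \mathcal{G} \to {}^L \mathcal{G}_{\mathrm{ad}}$, and in particular that the labels are invariant under this descent. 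Provided the 2-cocycles on both sides can be identified (which one expects because they both come from the same finite component group of the cuspidal datum, identified by the cuspidal LLC), the isomorphism then follows, with $\vec{v}$ determined explicitly in terms of the residue characteristic of $K$ and Lusztig's tables.
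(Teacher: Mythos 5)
Your overall strategy is the paper's: the tori are matched via the supercuspidal unipotent LLC of \cite{FOS}, the finite Weyl groups via its equivariance (Proposition \ref{prop:4.2}.b), and the labels by reduction to the adjoint case followed by Lusztig's comparison of the ``arithmetic'' and ``geometric'' affine Hecke algebras. One reduction step is missing, though, and it would bite. After passing to $\mc G_\ad$ and factoring into simple adjoint $K$-groups, such a factor need not be \emph{absolutely} simple: it is $\mr{Res}_{K'/K}\,\mc G'$ for an absolutely simple $\mc G'$ over a finite unramified extension $K'/K$, and Lusztig's matching (\cite[Theorem 6.3]{LusUni1}, \cite[Theorem 10.11]{LusUni2}) is invoked in the absolutely simple case (Lemma \ref{lem:4.6}). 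Hence one more reduction is needed: on the $p$-adic side the identification $\mc G (K) = \mc G' (K')$ leaves the Hecke algebra of the type untouched but the parameter function becomes $q_{K'}^{\mc N'}$, while on the Galois side one must know that $\mc H (\mf s^\vee, \vec{v})$ is invariant under Weil restriction, which is exactly Lemma \ref{lem:2.2} (the whole construction takes place inside $G^\vee_{t \phi_L} = Z^1_{{G^\vee}_\Sc}(t \phi_L |_{\mb W_K})$, which is unchanged). Without this step the appeal to ``Lusztig's tables'' for a merely simple adjoint group is not licensed, and the explicit parameter on such a factor is a power of $q_{K'}^{1/2}$, not of $q_K^{1/2}$ as your last sentence suggests.

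The verifications you defer as ``the hard part'' are what Lemmas \ref{lem:4.3} and \ref{lem:4.4} supply. On the $p$-adic side the labels are isogeny-invariant because cuspidal unipotent representations of parahoric subgroups and their parabolic inductions depend only on the group up to isogeny, so the exponents $|n_1 - n_2|$ read off from the dimensions of the two constituents of $\mr{ind}_{P_J}^{P_{J \cup \{i\}}}(\sigma)$ coincide for $G$ and $G_\ad$; on the Galois side $W_{\mf s^\vee}, \Phi_{\mf s^\vee}, \lambda, \lambda^*$ are computed inside the groups $Z_{{G^\vee}_\Sc}(t \phi_L (\mb W_K))^\circ$, hence do not change when $G^\vee$ is replaced by ${G^\vee}_\Sc$; only the tori/lattices move, and those are matched by the cuspidal LLC. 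Finally, the $2$-cocycle worry is resolved differently from what you expect: Proposition \ref{prop:3.2} converts the extension by $\Omega_{\mf f} / \Omega_{\mf f,\tor}$ into a genuine affine Hecke algebra on the larger lattice $X_{\mf f}$, so no cocycle appears on the $p$-adic side, and in this unipotent situation $W_{\mf s^\vee}$ is the Weyl group of $\Phi_{\mf s^\vee}$ (see \eqref{eq:4.11}), so no twisted group algebra $\C[\mf R_{\mf s^\vee},\natural]$ intervenes on the Galois side either.
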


In combination with the aforementioned properties of the involved affine Hecke
algebras, Theorem \ref{thm:D} provides the bijection in Theorem \ref{thm:A}.
Most of further properties mentioned in our main theorem follow rather quickly
from earlier work on such algebras \cite{AMS2,AMS3,SolComp}. \\

A few properties which can be expected of a local Langlands correspondence remain
open in Theorem \ref{thm:A}. Comparing with Borel's list of desiderata in \cite[\S 10]{Bor},
one notes that we have shown all of them, except for the functoriality with respect to
homomorphisms of reductive groups with commutative kernel and commutative cokernel.
We believe that this holds in a sense which is more precise and stronger than the
formulation in \cite{Bor}, but the proof appears to be cumbersome.

Further, it would nice to establish the HII conjectures for all unipotent representations.
In \cite[\S 16]{FOS} that was done for supercuspidal unipotent representations,
and in \cite{Opd18} a weaker version was proven for all unipotent representations.

A rather ambitious issue is the stability of the L-packets constructed in this paper.
Given $\phi \in \Phi_\nr (G)$, is there a linear combination of the members of the
L-packet $\Pi_\phi (G)$ whose trace gives a stable distribution on $G$? And if so, is
$\Pi_\phi (G)$ minimal for this property?

We hope to address these open problems in future work.

\section{Langlands dual groups and Levi subgroups}
\label{sec:Levi}

For more background on the material in this section, cf. \cite[\S 1--3]{Bor} and 
\cite[\S 2]{SiZi}. Let $K$ be field with an algebraic closure $\overline K$ and a 
separable closure $K_s \subset \overline K$. Let $\Gamma_K$ be a dense subgroup of the 
Galois group of $K_s / K$, for example Gal$(K_s / K)$ or, when $K$ is local and 
nonarchimedean, the Weil group of $K$.

Let $\mc G$ be a connected reductive $K$-group. Let $\mc T$ be a maximal torus 
of $\mc G$, and let $\Phi (\mc G, \mc T)$ 
be the associated root system. We also fix a Borel subgroup $\mc B$ of $\mc G$ 
containing $\mc T$, which determines a basis $\Delta$ of $\Phi (\mc G, \mc T)$. For 
every $\gamma \in \Gamma_K$ there exists a $g_\gamma \in \mc G (K_s)$ such that
\[
g_\gamma \gamma (\mc T) g_\gamma^{-1} = \mc T \quad \text{and} \quad
g_\gamma \gamma (\mc B) g_\gamma^{-1} = \mc B . 
\]
One defines an action of $\Gamma_K$ on $\mc T$ by
\begin{equation}\label{eq:1.2}
\mu_{\mc B}(\gamma) (t) = \mr{Ad}(g_\gamma) \circ \gamma (t) .
\end{equation}
This also determines an action $\mu_{\mc B}$ of $\Gamma_K$ on $\Phi (\mc G,\mc T)$,
which stabilizes $\Delta$.

Let $\Phi (\mc G,\mc T)^\vee$ be the dual root system of $\Phi (\mc G,\mc T)$,
contained in the cocharacter lattice $X_* (\mc T)$. The based root datum of $\mc G$ is
\[
\big( X^* (\mc T), \Phi (\mc G,\mc T), X_* (\mc T), \Phi (\mc G,\mc T)^\vee, \Delta \big) .
\]
Let $\mc S$ be a maximal $K$-split torus in $\mc G$. By \cite[Theorem 13.3.6.(i)]{Spr} 
applied to $Z_{\mc G}(\mc S)$, we may assume that $\mc T$ is defined over $K$ and 
contains $\mc S$. Then $Z_{\mc G}(\mc S)$ is a minimal $K$-Levi subgroup 
of $\mc G$. Let
\[
\Delta_0 := \{ \alpha \in \Delta : \mc S \subset \ker \alpha \}
\]
be the set of simple roots of $(Z_{\mc G}(\mc S), \mc T)$. 

Recall from \cite[Lemma 15.3.1]{Spr} that the root system $\Phi (\mc G, \mc S)$ is the 
image of $\Phi (\mc G, \mc T)$ in $X^* (\mc S)$, without 0. The set of simple roots of
$(\mc G, \mc S)$ can be identified with $(\Delta \setminus \Delta_0 ) / \mu_{\mc B}(\Gamma_K)$.
The Weyl group of $(\mc G, \mc S)$ can be expressed in various ways:
\begin{equation}\label{eq:1.1}
\begin{aligned}
W(\mc G,\mc S) & = N_{\mc G}(\mc S) / Z_{\mc G}(\mc S) \cong
N_{\mc G (K)}(\mc S(K)) / Z_{\mc G (K)}(\mc S (K)) \\
& \cong N_{\mc G}(\mc S,\mc T) / N_{Z_{\mc G}(\mc S)}(\mc T) =
\big( N_{\mc G}(\mc S,\mc T) / \mc T \big) \big/ \big( N_{Z_{\mc G}(\mc S)}(\mc T) / \mc T \big) \\
& \cong \mr{Stab}_{W(\mc G,\mc T)} (\mc S) / W(Z_{\mc G}(\mc S), \mc T) . 
\end{aligned}
\end{equation}
Let $\mc P_{\Delta_0} = Z_{\mc G}(\mc S) \mc B$ the minimal parabolic $K$-subgroup of $\mc G$ 
associated to $\Delta_0$. It is well-known \cite[Theorem 15.4.6]{Spr} 
that the following sets are canonically in bijection:
\begin{itemize}
\item $\mc G (K)$-conjugacy classes of parabolic $K$-subgroups of $\mc G$;
\item standard (i.e. containing $\mc P_{\Delta_0}$) parabolic $K$-subgroups of $\mc G$;
\item subsets of $(\Delta \setminus \Delta_0 ) /  \mu_{\mc B}(\Gamma_K)$;
\item $ \mu_{\mc B}(\Gamma_K)$-stable subsets of $\Delta$ containing $\Delta_0$.
\end{itemize}
By \cite[Lemma 15.4.5]{Spr} every $ \mu_{\mc B}(\Gamma_K)$-stable subset $I \subset \Delta$ containing
$\Delta_0$ gives rise to a standard Levi $K$-subgroup $\mc L_I$ of $\mc G$, namely the
group generated by $Z_{\mc G}(\mc S)$ and the root subgroups for roots in
$\Z I \cap \Phi (\mc G,\mc T)$.
The following description of conjugacy classes of Levi $K$-subgroups of $\mc G$ is
undoubtedly known, we provide the proof because we could not find it in the literature.

\begin{lem}\label{lem:1.1}
\enuma{
\item Every Levi $K$-subgroup of $\mc G$ is $\mc G (K)$-conjugate to a standard Levi
$K$-subgroup of $\mc G$.
\item For two standard Levi $K$-subgroups $\mc L_I$ and $\mc L_J$ the following are
equivalent:
\begin{enumerate}[(i)]
%\item $\mc L_I$ and $\mc L_J$ are $\mc G (\overline K)$-conjugate;
\item $\mc L_I$ and $\mc L_J$ are $\mc G (K)$-conjugate;
%\item $I$ and $J$ are associate by an element of $\mr{Stab}_{W(\mc G,\mc T)} (\mc S)$;
\item $(I \setminus \Delta_0) /  \mu_{\mc B}(\Gamma_K)$ and $(J \setminus \Delta_0) /  \mu_{\mc B}(\Gamma_K)$
are $W (\mc G, \mc S)$-associate.
\end{enumerate} }
\end{lem}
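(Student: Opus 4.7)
For part (a), the plan is to exploit the parabolic bijection cited immediately above the lemma. Any Levi $K$-subgroup $\mc L$ of $\mc G$ is a Levi component of some parabolic $K$-subgroup $\mc P$ of $\mc G$. By \cite[Theorem 15.4.6]{Spr} there exist $g \in \mc G(K)$ and a standard parabolic $\mc P_I$ with $g \mc P g^{-1} = \mc P_I$; then $g \mc L g^{-1}$ is a Levi component of $\mc P_I$. Since any two Levi components of a given parabolic $K$-subgroup are conjugate under a $K$-point of its unipotent radical, $g \mc L g^{-1}$ is $\mc G(K)$-conjugate to $\mc L_I$.

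For (b), I first record the translation between the combinatorial data. Set $I_{\mc S} := \{\alpha|_{\mc S} : \alpha \in I \setminus \Delta_0\}$. Combining \cite[Lemma 15.3.1]{Spr} with the identification $(\Delta \setminus \Delta_0)/\mu_{\mc B}(\Gamma_K) \cong \Delta(\mc G, \mc S)$ noted in the text, $I_{\mc S}$ is a basis of the relative root system $\Phi(\mc L_I, \mc S) = \Z I_{\mc S} \cap \Phi(\mc G, \mc S)$ and is in canonical bijection with $(I \setminus \Delta_0)/\mu_{\mc B}(\Gamma_K)$; the statement of (b) thus reduces to: $\mc L_I$ and $\mc L_J$ are $\mc G(K)$-conjugate iff $I_{\mc S}$ and $J_{\mc S}$ are $W(\mc G, \mc S)$-associate.

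For the forward direction, suppose $g \mc L_I g^{-1} = \mc L_J$ with $g \in \mc G(K)$. Both $\mc S$ and $g \mc S g^{-1}$ are maximal $K$-split tori of $\mc L_J$ (else they would be non-maximal in $\mc G$), so by their $\mc L_J(K)$-conjugacy I replace $g$ by a left $\mc L_J(K)$-translate landing in $N_{\mc G(K)}(\mc S)$, retaining $g \mc L_I g^{-1} = \mc L_J$. Its image $w \in W(\mc G, \mc S)$ under \eqref{eq:1.1} then sends $\Phi(\mc L_I, \mc S)$ to $\Phi(\mc L_J, \mc S)$, so $w(I_{\mc S})$ is some basis of the latter; composing on the left by the element of $W(\mc L_J, \mc S) \subseteq W(\mc G, \mc S)$ that carries $w(I_{\mc S})$ back to $J_{\mc S}$ gives the desired $W(\mc G, \mc S)$-associating element. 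Conversely, given $w \in W(\mc G, \mc S)$ with $w(I_{\mc S}) = J_{\mc S}$, I lift $w$ to $n \in N_{\mc G(K)}(\mc S)$ via \eqref{eq:1.1}; then $n \mc L_I n^{-1}$ contains $n Z_{\mc G}(\mc S) n^{-1} = Z_{\mc G}(\mc S)$ and has $\mc S$-root system $w \Phi(\mc L_I, \mc S) = \Phi(\mc L_J, \mc S)$, so it must equal $\mc L_J$.

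The point most likely to need a separate justification is this last equality: that a Levi $K$-subgroup of $\mc G$ containing $Z_{\mc G}(\mc S)$ is determined by its $\mc S$-root system. I would extract this from the $\mc S$-weight space decomposition of $\mr{Lie}(\mc G)$: such a Levi has Lie algebra $\mr{Lie}(Z_{\mc G}(\mc S)) \oplus \bigoplus_{\alpha \in \Phi(\mc L, \mc S)} \mr{Lie}(\mc G)_\alpha$, which recovers it as a connected reductive $K$-subgroup. The remaining ingredients (conjugacy of maximal $K$-split tori inside $\mc L_J$, surjectivity $N_{\mc G(K)}(\mc S) \to W(\mc G, \mc S)$, and the identification of $I_{\mc S}$ as a basis) are all standard Borel--Tits input and pose no serious obstacle.
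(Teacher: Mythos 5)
Your proposal is correct and follows essentially the same route as the paper's proof: part (a) via \cite[Theorem 15.4.6]{Spr} plus conjugacy of Levi factors of a fixed parabolic over $K$ (the paper cites \cite[Proposition 16.1.1]{Spr}), and part (b) via conjugating $g\mc S g^{-1}$ back to $\mc S$ inside $\mc L_J$ by \cite[Theorem 15.2.6]{Spr}, adjusting by an element of $W(\mc L_J,\mc S)$, and in the converse direction lifting $w$ to $N_{\mc G(K)}(\mc S)$ and identifying $\mc L_J$ by its relative root system. Your extra justification that a Levi $K$-subgroup containing $Z_{\mc G}(\mc S)$ is determined by $\Phi(\mc L,\mc S)$ (via the $\mc S$-weight decomposition of $\mr{Lie}(\mc G)$) fills a step the paper leaves implicit.
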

\begin{proof}
(a) Let $\mc P$ be a parabolic $K$-subgroup of $\mc G$ with a Levi factor $\mc L$
defined over $K$. Since $\mc P$ is $\mc G (K)$-conjugate to a standard parabolic
subgroup $\mc P_I$ \cite[Theorem  15.4.6]{Spr}, $\mc L$ is $\mc G (K)$-conjugate
to a Levi factor of $\mc P_I$. By \cite[Proposition 16.1.1]{Spr} any two such factors
are conjugate by an element of $\mc P_I (K)$. In particular $\mc L$ is 
$\mc G (K)$-conjugate to $\mc L_I$.\\
(b) Suppose that (ii) is fulfilled, that is, 
\[
w(I \setminus \Delta_0) /  \mu_{\mc B}(\Gamma_K) = (J \setminus \Delta_0) /  \mu_{\mc B}(\Gamma_K)
\quad \text{for some } w \in W(\mc G,\mc S) .
\]
Let $\bar w \in N_{\mc G (K)}(\mc S(K))$ be a lift of $w$. Then $\bar w \mc L_I {\bar w}^{-1}$
contains $Z_{\mc G} (\mc S)$ and 
\[
\Phi (\bar w \mc L_I {\bar w}^{-1} ,\mc S) = w \Phi (\mc L_I,\mc S) = \Phi (\mc L_J ,\mc S) .
\]
Hence $\bar w \mc L_I {\bar w}^{-1} = \mc L_J$, showing that (i) holds.

Conversely, suppose that (ii) holds, so $g \mc L_I g^{-1} = \mc L_J$ for some $g \in 
\mc G (K)$. Then $g \mc S g^{-1}$ is a maximal $K$-split torus of $\mc L_J$.
By \cite[Theorem 15.2.6]{Spr} there is a $l \in \mc L_J (K)$ such that 
$l g \mc S g^{-1} l^{-1} = \mc S$. Thus $(lg) \mc L_I (l g)^{-1} = \mc L_J$ and
$lg \in N_{\mc G}(\mc S)$. Let $w_1$ be the image of $l g$ in $W(\mc G,\mc S)$. Then 
$w_1 (\Phi (\mc L_I,\mc S)) = \Phi (\mc L_J,\mc S)$, so $w_1 \big( (I \setminus \Delta_0) 
/ \mu_{\mc B}(\Gamma_K) \big)$ is a basis of $\Phi (\mc L_J,\mc S)$. Any two bases 
of a root system are associate under its Weyl group, so there exists a 
$w_2 \in W(\mc L_J,\mc S) \subset W(\mc G,\mc S)$ such that 
\[
w_2 w_1 \big( (I \setminus \Delta_0) /  \mu_{\mc B}(\Gamma_K) \big) = 
(J \setminus \Delta_0) / \mu_{\mc B}(\Gamma_K) . \qedhere
\]
\end{proof}

Let $\mc G^\vee$ be the split reductive group with based root datum
\[
\big( X_* (\mc T), \Phi (\mc G,\mc T)^\vee, X^* (\mc T), 
\Phi (\mc G,\mc T), \Delta^\vee \big) . 
\]
Then $G^\vee = \mc G^\vee (\C)$ is the complex dual group of $\mc G$. Via the choice of
a pinning, the action $\mu_{\mc B}$ of $\Gamma_K$ on the root datum of $\mc G$, from 
\eqref{eq:1.2}, determines an action of $\Gamma_K$ of $G^\vee$. That action stabilizes
the torus $T^\vee = X^* (\mc T) \otimes_\Z \C^\times$ and the Borel subgroup
$B^\vee$ determined by $T^\vee$ and $\Delta^\vee$. The Langlands dual group (in the 
version based on $\Gamma_K$) of $\mc G (K)$ is ${}^L G := G^\vee \rtimes \Gamma_K$.

Every subset $I \subset \Delta$ corresponds to a unique subset $I^\vee \subset \Delta^\vee$,
and as such gives rise to a standard parabolic subgroup $P_I^\vee \subset G^\vee$ and a
standard Levi subgroup $L_I^\vee$. Following \cite{Bor,AMS1}, we define a L-parabolic 
subgroup ${}^L P$ of ${}^L G$ to be the normalizer of a parabolic subgroup
$P^\vee \subset G^\vee$ for which the canonical map
$N_{G^\vee \rtimes \Gamma_K}(P^\vee) \to \Gamma_K$ is surjective. As $\Gamma_K \subset
\mr{Gal}(K_s / K)$ is totally disconnected, $({}^L P )^\circ = P^\vee$.

Let $T_L^\vee \subset G^\vee$ be a torus such that $Z_{G^\vee \rtimes \Gamma_K}(T_L^\vee) \to
\Gamma_K$ is surjective. Then we call $Z_{G^\vee \rtimes \Gamma_K}(T_L^\vee)$ a
Levi L-subgroup of ${}^L G$. Notice that $( Z_{G^\vee \rtimes \Gamma_K}(T_L^\vee) )^\circ
= Z_{G^\vee}(T_L^\vee)$ is a Levi subgroup of $G^\vee$.

Special cases include $P_I^\vee \rtimes \Gamma_K$ and $L_I^\vee \rtimes \Gamma_K$, where
$P_I^\vee$ (resp. $L_I^\vee$) is a standard Levi subgroup of $G^\vee$ such that
$I$ is $\Gamma_K$-stable. We call these standard L-parabolic (resp. L-Levi) subgroups
of ${}^L G$.

We say that a L-parabolic (resp. L-Levi) subgroup ${}^L H \subset {}^L G$ is 
$\mc G (K)$-relevant if the $G^\vee$-conjugacy class of $({}^L H)^\circ \subset G^\vee$
corresponds to a conjugacy class of parabolic (resp. Levi) $K$-subgroups of $\mc G$.
As observed in \cite[\S 3]{Bor}, for $\Gamma_K$-stable $I \subset \Delta$ 
\begin{equation}\label{eq:1.5}
\text{the groups } P_I^\vee \rtimes \Gamma_K \text{ and } L_I^\vee \rtimes \Gamma_K 
\text{ are } \mc G (K)\text{-relevant if and only if } \Delta_0 \subset I . 
\end{equation}
Moreover the correspondence
\begin{equation}\label{eq:1.4}
\mc P_I \longleftrightarrow P_I^\vee \rtimes \Gamma_K 
\end{equation}
provides a bijection between the set of $\mc G (K)$-conjugacy classes of parabolic
$K$-subgroups of $\mc G$ and the set of $G^\vee$-conjugacy classes of $\mc G(K)$-relevant
L-parabolic subgroups of ${}^L G$ \cite[\S 3]{Bor}. Similarly, there is a bijective
correspondence between the set of standard Levi $K$-subgroups of $\mc G$ and the set of
standard $\mc G (K)$-relevant L-Levi subgroups of ${}^L G$:
\begin{equation}\label{eq:1.6}
\mc L_I \longleftrightarrow L_I^\vee \rtimes \Gamma_K .
\end{equation}
The actions of $\Gamma_K$ on $\Phi (\mc G,\mc T)$ and on $\Phi (\mc G,\mc T)^\vee = 
\Phi (G^\vee,T^\vee)$ induce $\Gamma_K$-actions on the associated Weyl groups. The 
$\Gamma_K$-equivariant isomorphism 
\[
W(\mc G,\mc T) \cong W(G^\vee,T^\vee)
\]
can be modified to a version for $\mc S$. Namely, it was shown in 
\cite[Proposition 3.1 and (43)]{ABPSLLC} that there are canonical isomorphisms
\begin{equation}\label{eq:1.3}
W(\mc G,\mc S) \to \mr{Stab}_{W(G^\vee,T^\vee)^{\Gamma_K}}(\Z \Delta_0^\vee) \big/
W (L_{\Delta_0}^\vee, T^\vee)^{\Gamma_K} \to 
N_{G^\vee}(L_{\Delta_0}^\vee \rtimes \Gamma_K) \big/ L_{\Delta_0}^\vee .
\end{equation}
As $W(G^\vee,T^\vee)$ acts naturally on $X_* (\mc T) = X^* (T^\vee)$,
$\mr{Stab}_{W(G^\vee,T^\vee)^{\Gamma_K}}(\Z \Delta_0^\vee)$ acts on $X_* (\mc T) / 
\Z \Delta_0^\vee$. This descends to a natural action of 
\[
\mr{Stab}_{W(G^\vee,T^\vee)^{\Gamma_K}}(\Z \Delta_0^\vee) \big/
W (L_{\Delta_0}^\vee, T^\vee)^{\Gamma_K} \quad \text{on} \quad 
X_* (\mc T) / \Z \Delta_0^\vee,
\]
which stabilizes the image of $\Phi (\mc G,\mc T)^\vee$ in $X_* (\mc T) / \Z \Delta_0^\vee$.
As observed in \cite[Proposition 2.5.4]{SiZi}, the correspondences \eqref{eq:1.4} and 
\eqref{eq:1.6} are $W(\mc G ,\mc S)$-equivariant, with respect to \eqref{eq:1.3}.

\begin{lem}\label{lem:1.2}
\enuma{
\item Every $\mc G (K)$-relevant L-Levi subgroup of ${}^L G$ is $G^\vee$-conjugate to
a $\mc G (K)$-relevant standard L-Levi subgroup of ${}^L G$.
\item Let $I^\vee, J^\vee \subset \Delta^\vee$ be $\Gamma_K$-stable subsets containing
$\Delta_0^\vee$. The two $\mc G(K)$-relevant standard L-Levi subgroups 
$L_I^\vee \rtimes \Gamma_K$ and $L_J^\vee \rtimes \Gamma_K$ are $G^\vee$-conjugate if and only 
if there exists a $w^\vee \in \mr{Stab}_{W(G^\vee,T^\vee)^{\Gamma_K}}(\Z \Delta_0^\vee) /
W (L_{\Delta_0}^\vee, T^\vee)^{\Gamma_K}$ with 
$w^\vee (I^\vee \setminus \Delta_0 ) = J^\vee \setminus \Delta_0$.
} 
\end{lem}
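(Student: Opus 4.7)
The plan is to mirror the proof of Lemma \ref{lem:1.1} on the Langlands-dual side, using the correspondences \eqref{eq:1.4}--\eqref{eq:1.6} and the Weyl-group identification \eqref{eq:1.3} as translation dictionaries. For (a), given a $\mc G(K)$-relevant L-Levi ${}^L L = Z_{G^\vee \rtimes \Gamma_K}(T_L^\vee)$, I would first use $G^\vee$-conjugation on $T_L^\vee$ to arrange $T_L^\vee \subset T^\vee$, so that the connected part $({}^L L)^\circ = Z_{G^\vee}(T_L^\vee)$ becomes a standard Levi $L_{I^\vee}^\vee$ of $G^\vee$ for some $I^\vee \subset \Delta^\vee$. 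The surjectivity hypothesis supplies, for each $\gamma \in \Gamma_K$, a lift $(g_\gamma',\gamma) \in {}^L L$ whose conjugation preserves $L_{I^\vee}^\vee$. A further $L_{I^\vee}^\vee$-conjugation then allows us to assume that each $(g_\gamma',\gamma)$ stabilizes the restricted pinning of $L_{I^\vee}^\vee$ inherited from $(T^\vee, B^\vee)$, which forces $I^\vee$ to be $\mu_{\mc B}(\Gamma_K)$-stable. Then $\mc G(K)$-relevance combined with \eqref{eq:1.5} yields $\Delta_0^\vee \subset I^\vee$, placing ${}^L L$ in standard position.

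For (b), the easier implication (ii)$\Rightarrow$(i) uses \eqref{eq:1.3}: any $w^\vee$ in the indicated quotient lifts, via the second isomorphism there, to an element of $N_{G^\vee}(L_{\Delta_0^\vee}^\vee \rtimes \Gamma_K)$, and conjugation by this lift carries $L_{I^\vee}^\vee \rtimes \Gamma_K$ to $L_{J^\vee}^\vee \rtimes \Gamma_K$ since $w^\vee(I^\vee \setminus \Delta_0^\vee) = J^\vee \setminus \Delta_0^\vee$ and $\Delta_0^\vee$ is common to both. For (i)$\Rightarrow$(ii), suppose $g \in G^\vee$ satisfies $g(L_{I^\vee}^\vee \rtimes \Gamma_K)g^{-1} = L_{J^\vee}^\vee \rtimes \Gamma_K$. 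Then $g T^\vee g^{-1}$ is a maximal torus of $L_{J^\vee}^\vee$, so conjugacy of maximal tori in $L_{J^\vee}^\vee$ gives $l \in L_{J^\vee}^\vee$ with $lg \in N_{G^\vee}(T^\vee)$. The image $w^\vee$ of $lg$ in $W(G^\vee, T^\vee)$ sends $\Phi(L_{I^\vee}^\vee, T^\vee)$ to $\Phi(L_{J^\vee}^\vee, T^\vee)$, and since the conjugation continues to respect the semidirect-product structure with $\Gamma_K$, $w^\vee$ is $\Gamma_K$-fixed. A final adjustment by a suitable element of $W(L_{J^\vee}^\vee, T^\vee)^{\Gamma_K}$, exactly as at the end of the proof of Lemma \ref{lem:1.1}(b), brings $w^\vee$ into the stabilizer of $\Delta_0^\vee$, so that $w^\vee$ represents a class in the quotient of \eqref{eq:1.3} with the stated property.

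The main obstacle, in both parts, is separating the inner twists coming from $T_L^\vee$ (resp.\ from $l \in L_{J^\vee}^\vee$) from the ambient $\mu_{\mc B}$-action of $\Gamma_K$. In (a) this is addressed by the pinning-stabilization step, which aligns the $\Gamma_K$-action given by the lifts $(g_\gamma',\gamma)$ with the reference action $\mu_{\mc B}$; in (b) the analogous issue is verifying that the auxiliary $l$ is $\Gamma_K$-compatible so that $\Gamma_K$-invariance of $w^\vee$ is preserved. Both reduce to the dual-group analogues of the conjugacy results for maximal tori and Levi factors \cite[Theorem 15.2.6, Proposition 16.1.1]{Spr} that underpin Lemma \ref{lem:1.1}.
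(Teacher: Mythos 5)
Your part (a) and the implication (ii)$\Rightarrow$(i) of (b) are fine and close to the paper (for (a) the paper simply quotes \cite[Lemma 6.2]{AMS1}; your sketch compresses that argument, and the simultaneous alignment of the pinning for all $\gamma \in \Gamma_K$ at once would still need an argument, but this is minor). The genuine gap is in the converse direction of (b). After arranging $a := lg \in N_{G^\vee}(T^\vee)$, you assert that the image $w^\vee$ of $a$ in $W(G^\vee,T^\vee)$ is $\Gamma_K$-fixed ``since the conjugation respects the semidirect-product structure''. That is not what the hypothesis gives: from $a (L_I^\vee \rtimes \Gamma_K) a^{-1} = L_J^\vee \rtimes \Gamma_K$ one only gets, by taking $x=1$ in $(a,1)(x,\gamma)(a,1)^{-1} = (a x \gamma(a)^{-1},\gamma)$, that $a\,\gamma(a)^{-1} \in N_{L_J^\vee}(T^\vee)$ for every $\gamma$, i.e.\ $w^\vee \gamma(w^\vee)^{-1} \in W(L_J^\vee,T^\vee)$. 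So $w^\vee$ is $\Gamma_K$-fixed only modulo $W(L_J^\vee,T^\vee)$, not in $W(G^\vee,T^\vee)$. Moreover, even if $\Gamma_K$-fixedness were available, the lemma requires the class to lie in $\mr{Stab}_{W(G^\vee,T^\vee)^{\Gamma_K}}(\Z \Delta_0^\vee)\,/\,W(L_{\Delta_0}^\vee,T^\vee)^{\Gamma_K}$, so you must also show that a representative stabilizes $\Z\Delta_0^\vee$; your ``final adjustment by a suitable element of $W(L_{J^\vee}^\vee,T^\vee)^{\Gamma_K}$'' is not shown to accomplish either point -- the two relevant bases are associate under the full Weyl group of $\Phi(L_J^\vee,T^\vee)$, which does not by itself furnish a $\Gamma_K$-fixed adjusting element, and nothing in your argument forces the stabilization of $\Z\Delta_0^\vee$.

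The paper closes exactly this gap by a different mechanism. After producing $l_1$ with $l_1 g \in N_{G^\vee}(T^\vee)$, it chooses $l_2 \in N_{L_J^\vee}(T^\vee)$ so that $l_2 l_1 g$ carries $L_I^\vee \cap B^\vee$ to $L_J^\vee \cap B^\vee$. Then $l_2 l_1 g\, (L_{\Delta_0}^\vee \rtimes \Gamma_K)\, (l_2 l_1 g)^{-1}$ is a standard L-Levi subgroup which is $\mc G(K)$-relevant and minimal with that property; since $L_{\Delta_0}^\vee \rtimes \Gamma_K$ is the unique minimal standard $\mc G(K)$-relevant L-Levi subgroup, $l_2 l_1 g$ normalizes it, so $l_2 l_1 g \in N_{G^\vee}(L_{\Delta_0}^\vee \rtimes \Gamma_K, T^\vee)$. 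The isomorphism \eqref{eq:1.3} (from \cite{ABPSLLC}) then converts this normalizer element directly into an element of $\mr{Stab}_{W(G^\vee,T^\vee)^{\Gamma_K}}(\Z\Delta_0^\vee) / W(L_{\Delta_0}^\vee,T^\vee)^{\Gamma_K}$ sending $I^\vee$ to $J^\vee$. In other words, $\Gamma_K$-fixedness and the stabilization of $\Z\Delta_0^\vee$ are not verified by hand on $w^\vee$; they are obtained simultaneously by showing that the conjugating element normalizes $L_{\Delta_0}^\vee \rtimes \Gamma_K$ and then appealing to \eqref{eq:1.3}. To repair your proof you would need to insert this step (or an equivalent substitute), and note that it is also the place where $\mc G(K)$-relevance genuinely enters the converse direction.
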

\begin{proof}
(a) By \cite[Lemma 6.2]{AMS1} every L-Levi subgroup of ${}^L G$ is $G^\vee$-conjugate
to a standard L-Levi subgroup. By definition $G^\vee$-conjugacy preserves
$\mc G (K)$-relevance.\\
(b) Suppose that a $w^\vee$ with the indicated properties exists. Let $\tilde w \in
N_{G^\vee}(T^\vee)$ be a lift of $w^\vee$ (by \eqref{eq:1.3} it is unique up to
$N_{L_{\Delta_0}^\vee}(T^\vee)$). Then $\tilde w (L_I^\vee \rtimes \Gamma_K) {\tilde w}^{-1}$
contains $L_{\Delta_0}^\vee \rtimes \Gamma_K$ and the roots of 
\[
\big( \tilde w (L_I^\vee \rtimes \Gamma_K) {\tilde w}^{-1}\big)^\circ 
= \tilde w L_I^\vee {\tilde w}^{-1}
\]
with respect to $T^\vee$ are 
\[
w (\Phi (L_I^\vee,T^\vee)) = w (\Z I^\vee \cap \Phi (G^\vee,T^\vee)) =
\Z J^\vee \cap \Phi (G^\vee,T^\vee) = \Phi (L_J^\vee,T^\vee) .
\]
Hence $\tilde w (L_I^\vee \rtimes \Gamma_K) {\tilde w}^{-1} = L_J^\vee$.

Conversely, suppose that $g (L_I^\vee \rtimes \Gamma_K) g^{-1} = L_J^\vee \rtimes \Gamma_K$
for some $g \in G^\vee$. Then
\[
L_J^\vee = (L_J^\vee \rtimes \Gamma_K )^\circ = \big( g (L_I^\vee \rtimes \Gamma_K) 
g^{-1} \big)^\circ = g L_I^\vee g^{-1} .
\]
In the proof of Lemma \ref{lem:1.1}.b we showed that there exists a $l_1 \in L_J^\vee$
such that $l_1 g \in N_{G^\vee}(T^\vee)$ and $(l_1 g) L_I^\vee \rtimes \Gamma_K (l_1 g)^{-1}
= L_J^\vee \rtimes \Gamma_K$. Now $(l_1 g) (L_I^\vee \cap B^\vee) (l_1 g)^{-1}$ is a Borel
subgroup of $L_J^\vee$ containing $T^\vee$. By the conjugacy of Borel subgroups and maximal
tori, there exists $l_2 \in N_{L_J^\vee}(T^\vee)$ such that 
\[
l_2 l_1 g (L_I^\vee \cap B^\vee) g^{-1} l_1^{-1} l_2^{-1} = L_J^\vee \cap B^\vee.  
\]
Then conjugation by $l_2 l_1 g$ sends the set $\Delta_0$ of simple roots for $L_{\Delta_0}^\vee$
to the set of simple roots for $L_J^\vee \cap B^\vee$. Hence
\[
l_2 l_1 g (L_{\Delta_0}^\vee \rtimes \Gamma_K) g^{-1} l_1^{-1} l_2^{-1}
\; \subset \; L_J^\vee \rtimes \Gamma_K
\]
is a standard L-Levi subgroup of ${}^L G$. It is conjugate to $L_{\Delta_0}^\vee \rtimes \Gamma_K$, 
so $\mc G (K)$-relevant and minimal for that property. As $L_{\Delta_0}^\vee \rtimes \Gamma_K$
is the unique standard minimal $\mc G(K)$-relevant L-Levi subgroup of ${}^L G$, it must
be normalized by $l_2 l_1 g$. Thus $l_2 l_1 g \in N_{G^\vee}(L_{\Delta_0}^\vee \rtimes \Gamma_K,
T^\vee)$ sends $I^\vee$ to $J^\vee$. By \eqref{eq:1.3} there exists a
$w^\vee \in \mr{Stab}_{W(G^\vee,T^\vee)^{\Gamma_K}}(\Z \Delta_0^\vee)$ mapping to $l_2 l_1 g$,
and then $w^\vee (I^\vee) = J^\vee$.
\end{proof}

By \eqref{eq:1.3} the set of orbits of $W(\mc G,\mc S)$ on $(\Delta \setminus \Delta_0) /
\mu_{\mc B}(\Gamma_K)$ is canonically in bijection with the set of orbits of  
$\mr{Stab}_{W(G^\vee,T^\vee)^{\Gamma_K}}(\Z \Delta_0^\vee) \big/
W (L_{\Delta_0}^\vee, T^\vee)^{\Gamma_K}$ on $(\Delta^\vee \setminus \Delta_0^\vee ) /
\Gamma_K$. This and Lemmas \ref{lem:1.1} and \ref{lem:1.2} yield the version of \eqref{eq:1.4} 
for Levi subgroups that we were after:

\begin{cor}\label{cor:1.3}
The assignment $\mc L_I \mapsto L_I^\vee \rtimes \Gamma_K$ from \eqref{eq:1.6} provides a 
bijection between the set of $\mc G (K)$-conjugacy classes of Levi $K$-subgroups of $\mc G$ 
and the set of $G^\vee$-conjugacy classes of $\mc G(K)$-relevant L-Levi subgroups of ${}^L G$.
\end{cor}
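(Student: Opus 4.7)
The plan is to assemble the corollary from the two parametrization lemmas and the Weyl-group identification in \eqref{eq:1.3}. The key observation is that both sides of the asserted bijection are indexed, via the standard representatives, by essentially the same combinatorial data: $\mu_{\mc B}(\Gamma_K)$-stable subsets $I \subset \Delta$ containing $\Delta_0$, modulo an appropriate Weyl group action.

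First I would reduce to the standard case on both sides. By Lemma \ref{lem:1.1}(a), every $\mc G(K)$-conjugacy class of Levi $K$-subgroups of $\mc G$ has a standard representative $\mc L_I$ with $I$ a $\mu_{\mc B}(\Gamma_K)$-stable subset of $\Delta$ containing $\Delta_0$. By Lemma \ref{lem:1.2}(a), together with the relevance criterion \eqref{eq:1.5}, every $G^\vee$-conjugacy class of $\mc G(K)$-relevant L-Levi subgroups of ${}^L G$ has a standard representative $L_I^\vee \rtimes \Gamma_K$ with the same constraint on $I^\vee \subset \Delta^\vee$ (noting that $\Gamma_K$-stability of $I$ is equivalent to that of $I^\vee$ under the natural bijection $\Delta \leftrightarrow \Delta^\vee$). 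So the map $\mc L_I \mapsto L_I^\vee \rtimes \Gamma_K$ of \eqref{eq:1.6} sends standard representatives to standard representatives bijectively at the level of labels.

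Next I would match the two equivalence relations. By Lemma \ref{lem:1.1}(b), the fibers of the first parametrization are the orbits of $W(\mc G,\mc S)$ on the collection of subsets $(I \setminus \Delta_0)/\mu_{\mc B}(\Gamma_K)$ of $(\Delta \setminus \Delta_0)/\mu_{\mc B}(\Gamma_K)$. By Lemma \ref{lem:1.2}(b), the fibers of the second parametrization are the orbits of $\mr{Stab}_{W(G^\vee,T^\vee)^{\Gamma_K}}(\Z \Delta_0^\vee)/W(L_{\Delta_0}^\vee,T^\vee)^{\Gamma_K}$ on subsets of $(\Delta^\vee \setminus \Delta_0^\vee)/\Gamma_K$. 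The isomorphism \eqref{eq:1.3} and the remark preceding the corollary (that it identifies the two orbit spaces on $(\Delta \setminus \Delta_0)/\mu_{\mc B}(\Gamma_K)$ and $(\Delta^\vee \setminus \Delta_0^\vee)/\Gamma_K$) say precisely that these two equivalence relations correspond under $I \leftrightarrow I^\vee$.

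Combining the two steps: the composite bijection takes each $\mc G(K)$-conjugacy class of Levi $K$-subgroups, chooses a standard $\mc L_I$, sends it to $L_I^\vee \rtimes \Gamma_K$, and then passes to its $G^\vee$-conjugacy class; by the matching of the fibers the result is independent of the chosen representative and yields a bijection onto $G^\vee$-conjugacy classes of $\mc G(K)$-relevant L-Levi subgroups. No genuine obstacle remains beyond being careful that \eqref{eq:1.3} is indeed the same Weyl-group identification that governs both orbit relations; this is essentially the content of \cite[Proposition 2.5.4]{SiZi} cited just before the statement, which makes the correspondence \eqref{eq:1.6} equivariant for the two actions.
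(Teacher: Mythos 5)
Your argument is correct and follows the same route as the paper: reduce both sides to standard representatives via Lemmas \ref{lem:1.1}(a) and \ref{lem:1.2}(a), then identify the two equivalence relations on labels $I$ using Lemmas \ref{lem:1.1}(b), \ref{lem:1.2}(b) and the orbit correspondence coming from \eqref{eq:1.3}. The paper's own proof is exactly this assembly, stated in the paragraph preceding the corollary, so nothing further is needed.
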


\section{Hecke algebras for Langlands parameters}
\label{sec:2}

From now on $K$ is a non-archimedean local field with ring of integers $\mf o_K$ and
a uniformizer $\varpi_K$. Let $k = \mf o_K / \varpi_K \mf o_K$ be its residue
field, of cardinality $q_K$. Let $\mb W_K \subset \mr{Gal}(K_s / K)$ be the Weil group of $K$ 
and let $\Fr$ be an (arithmetic) Frobenius element. Let $\mb I_K  \subset \mb W_K$ be the 
inertia subgroup, so that $\mb W_K / \mb I_K \cong \Z$ is generated by $\Fr$.

We let $\mc G$ and its subgroups be as in Section \ref{sec:Levi}.
We write $G = \mc G(K)$ and similarly for other $K$-groups. 
Recall that a Langlands parameter for $G$ is a homomorphism 
\[
\phi : \mb W_K \times SL_2 (\C) \to {}^L G = G^\vee \rtimes \mb W_K ,
\]
with some extra requirements. In particular $\phi |_{SL_2 (\C)}$ has to be algebraic, 
$\phi (\mb W_K)$ must consist of semisimple elements and $\phi$ must respect the
projections to $\mb W_K$. 

We say that a L-parameter $\phi$ for $G$ is 
\begin{itemize}
\item discrete if there does not exist any proper L-Levi subgroup of ${}^L G$ containing the
image of $\phi$;
\item bounded if $\phi (\Fr) = (s,\Fr)$ with $s$ in a bounded subgroup of $G^\vee$;
\item unramified if $\phi (w) = (1,w)$ for all $w \in \mb I_K$.
\end{itemize}
Let ${G^\vee}_\ad$ be the adjoint group of $G^\vee$, and let ${G^\vee}_\Sc$ be its
simply connected cover. Let $\mc G^*$ be the unique $K$-quasi-split inner form of $\mc G$. 
We consider $\mc G$ as an inner twist of $\mc G^*$, so endowed with a $K_s$-isomorphism
$\mc G \to \mc G^*$. Via the Kottwitz isomorphism $\mc G$ is labelled by character
$\zeta_{\mc G}$ of $Z({G^\vee}_\Sc)^{\mb W_K}$ (defined with respect to $\mc G^*$).

Both ${G^\vee}_\ad$ and ${G^\vee}_\Sc$ act on $G^\vee$ by conjugation. As
\[
Z_{G^\vee}(\text{im } \phi) \cap Z(G^\vee) = Z(G^\vee)^{\mb W_K} ,
\]
we can regard $Z_{G^\vee}(\text{im } \phi) / Z(G^\vee)^{\mb W_K}$ as a subgroup of 
${G^\vee}_\ad$.  Let $Z^1_{{G^\vee}_\Sc}(\text{im } \phi)$ be its inverse image in
${G^\vee}_\Sc$ (it contains $Z_{{G^\vee}_\Sc}(\text{im } \phi)$ with finite index). The
S-group of $\phi$ is
\[
\mc S_\phi := \pi_0 \big( Z^1_{{G^\vee}_\Sc}(\text{im } \phi) \big) .
\]
An enhancement of $\phi$ is an irreducible representation $\rho$ of $\mc S_\phi$.
Via the canonical map $Z({G^\vee}_\Sc)^{\mb W_K} \to \mc S_\phi$, $\rho$ determines
a character $\zeta_\rho$ of $Z({G^\vee}_\Sc)^{\mb W_K}$. 
We say that an enhanced L-parameter $(\phi,\rho)$ is relevant for $G$ if $\zeta_\rho =
\zeta_{\mc G}$. This is equivalent to $\phi$ being $G$-relevant in terms of Levi
subgroups \cite[Lemma 9.1]{HiSa}. In view of \eqref{eq:1.5}, this means that 
$(\phi,\rho)$ is $G$-relevant if and only if every L-Levi subgroup of ${}^L G$ 
containing the image of $\phi$ is $G$-relevant. The group $G^\vee$ acts naturally on 
the collection of $G$-relevant enhanced L-parameters, by 
\[
g \cdot (\phi,\rho) = (g \phi g^{-1},\rho \circ \mr{Ad}(g)^{-1}) .
\]
We denote the set of $G^\vee$-equivalence classes of $G$-relevant (resp. enhanced) 
L-parameters by $\Phi (G)$, resp. $\Phi_e (G)$. A local Langlands correspondence 
for $G$ (in its modern interpretation) should be a bijection between $\Phi_e (G)$ and 
the set of irreducible smooth $G$-representations, with several nice properties.

Let $H^1 (\mb W_K, Z(G^\vee))$ be the first Galois cohomology group of $\mb W_K$ with
values in $Z(G^\vee)$. It acts on $\Phi (G)$ by
\begin{equation}\label{eq:2.9}
(z \phi) (w,x) = z' (w) \phi (w,x) \qquad \phi \in \Phi (G), w \in \mb W_K, x \in SL_2 (\C) ,
\end{equation}
where $z' : \mb W_K \to Z(G^\vee)$ represents $z \in H^1 (\mb W_K, Z(G^\vee))$. 
This extends to an action of $H^1 (\mb W_K, Z(G^\vee))$ on $\Phi_e (G)$, which does 
nothing to the enhancements.

Let us focus on cuspidality for enhanced L-parameters \cite[\S 6]{AMS1}. Consider
\[
G^\vee_\phi := Z^1_{{G^\vee}_\Sc}( \phi |_{\mb W_K}),
\]
a possibly disconnected complex reductive group. Then $u_\phi := \phi \big( 1, 
\big( \begin{smallmatrix} 1 & 1 \\ 0 & 1 \end{smallmatrix} \big)\big)$ is a unipotent element 
of $(G^\vee_\phi )^\circ$ and $\mc S_\phi \cong \pi_0 (Z_{G^\vee_\phi}(u_\phi))$. We say that 
$(\phi,\rho) \in \Phi_e (G)$ is cuspidal if $\phi$ is discrete and $(u_\phi,\rho)$ is a 
cuspidal pair for $G^\vee_\phi$. The latter means that $(u_\phi,\rho)$ determines a 
$G^\vee_\phi$-equivariant cuspidal local system on the $(G^\vee_\phi)^\circ$-conjugacy class 
of $u_\phi$. Notice that a L-parameter alone does not contain enough information to detect 
cuspidality, for that we really need an enhancement. Therefore we will often say 
"cuspidal L-parameter" for an enhanced L-parameter which is cuspidal. 

The set of $G^\vee$-equivalence classes of $G$-relevant cuspidal L-parameters is denoted
$\Phi_\cusp (G)$. It is conjectured that under the LLC $\Phi_\cusp (G)$ corresponds to
the set of supercuspidal irreducible smooth $G$-representations.

The cuspidal support of any $(\phi,\rho) \in \Phi_e (G)$ is defined in \cite[\S 7]{AMS1}. 
It is unique up to $G^\vee$-conjugacy and consists of a $G$-relevant L-Levi subgroup
${}^L L$ of ${}^L G$ and a cuspidal L-parameter  $(\phi_v, q\epsilon)$ for ${}^L L$. 
By Corollary \ref{cor:1.3} this ${}^L L$ corresponds to a unique (up to $G$-conjugation)
Levi $K$-subgroup $\mc L$ of $\mc G$. This allows us to express the aforementioned 
cuspidal support map as
\begin{equation}\label{eq:2.1}
\mb{Sc}(\phi,\rho) = (\mc L (K), \phi_v, q \epsilon), \quad \text{where }
(\phi_v, q \epsilon) \in \Phi_\cusp (\mc L (K)) .
\end{equation}
It is conjectured that under the LLC this map should correspond to Bernstein's cuspidal
support map for irreducible smooth $G$-representations.

Sometimes we will be a little sloppy and write that $L = \mc L (K)$ is a Levi subgroup of $G$. 
Let $X_\nr (L)$ be the group of unramified characters $L \to \C^\times$.
As worked out in \cite[\S 3.3.1]{Hai}, it is naturally isomorphic to 
$(Z (L^\vee)^{\mb I_K} )_\Fr^\circ \subset H^1 (\mb W_K, Z(L^\vee))$. As such it acts on 
$\Phi_e (L)$ and on $\Phi_\cusp (L)$ by \eqref{eq:2.9}.
A cuspidal Bernstein component of $\Phi_e (L)$ is a set of the form
\[
\Phi_e (L)^{\mf s_L^\vee} := 
X_\nr (L) \cdot (\phi_L,\rho_L) \text{ for some } (\phi_L,\rho_L) \in \Phi_\cusp (L) . 
\]
The group $G^\vee$ acts on the set of cuspidal Bernstein components for all Levi subgroups
of $G$. The $G^\vee$-action is just by conjugation, but to formulate it precisely, more 
general L-Levi subgroups of ${}^L G$ are necessary. We prefer to keep those out of the notations, 
since we do not need them to get all classes up to equivalence. With that convention, we can 
define an inertial equivalence class for $\Phi_e (G)$ as 
\[
\mf s \text{ is the } G^\vee \text{-orbit of } 
(L, X_\nr (L) \cdot (\phi_L,\rho_L) ), \text{ where } (\phi_L,\rho_L) \in \Phi_\cusp (L).
\]
The underlying inertial equivalence class for $\Phi_e (L)$ is $\mf s_L^\vee = (L,X_\nr (L) \cdot
(\phi_L,\rho_L))$. Here it is not necessary to take the $L^\vee$-orbit, for 
$(\phi_L,\rho_L) \in \Phi_e (L)$ is fixed by $L^\vee$-conjugation.

We denote the set of inertial equivalence classes for $\Phi_e (G)$ by $\mf{Be}^\vee (G)$.
Every $\mf s^\vee \in \mf{Be}^\vee (G)$ gives rise to a Bernstein component in $\Phi^e (G)$
\cite[\S 8]{AMS1}, namely
\begin{equation}\label{eq:2.10}
\Phi_e (G)^{\mf s^\vee} = \{ (\phi,\rho) \in \Phi_e (G) : \mb{Sc}(\phi,\rho) \in \mf s^\vee \} .
\end{equation}
The set of such Bernstein components is also parametrized by $\mf{Be}^\vee (G)$, and forms a 
partition of $\Phi_e (G)$.

Notice that $\Phi_e (L)^{\mf s^\vee_L} \cong \mf s_L^\vee$ has a canonical topology, 
coming from the transitive action of $X_\nr (L)$. More precisely, let $X_\nr (L,\phi_L)$ 
be the stabilizer in $X_\nr (L)$ of $\phi_L$. Then the complex torus 
\[
T_{\mf s_L^\vee} := X_\nr (L) / X_\nr (L,\phi_L)
\] 
acts simply transitively on $\mf s_L^\vee$. This endows $\mf s_L^\vee$ with the structure of an 
affine variety. (There is no canonical group structure on $\mf s_L^\vee$ though, for that one 
still needs to choose a basepoint.) 

To $\mf s^\vee$ we associate a finite group $W_{\mf s^\vee}$, in many
cases a Weyl group. For that, we choose $\mf s_L^\vee = (L, X_\nr (L) \cdot (\phi_L,\rho_L) )$
representing $\mf s^\vee$ (up to isomorphism, the below does not depend on this choice). We define 
$W_{\mf s^\vee}$ as the stabilizer of $\mf s_L^\vee$ in $N_{G^\vee}(L^\vee \rtimes \mb W_K) / L^\vee$. 
In this setting we write $T_{\mf s^\vee}$ for $T_{\mf s_L^\vee}$. Thus $W_{\mf s^\vee}$ acts on 
$\mf s_L^\vee$ by algebraic automorphisms and on $T_{\mf s^\vee}$ by group automorphisms (but the 
bijection $T_{\mf s^\vee} \to \mf s_L^\vee$ need not be $W_{\mf s^\vee}$-equivariant). 

Next we quickly review the construction of an affine Hecke algebra from a Bernstein component
of enhanced Langlands parameters. We fix a basepoint $\phi_L$ for $\mf s_L^\vee$ as in
\cite[Proposition 3.9]{AMS3}, and use that to identify $\mf s_L^\vee$ with $T_{\mf s_L^\vee}$.
Consider the possibly disconnected reductive group
\[
G^\vee_{\phi_L} = Z^1_{{G^\vee}_\Sc} (\phi_L |_{\mb W_K}) .
\]
Let $L_c^\vee$ be the Levi subgroup of ${G^\vee}_\Sc$ determined by $L^\vee$. There
is a natural homomorphism 
\begin{equation}\label{eq:2.2}
Z(L_c^\vee)^{\mb W_K,\circ} \to X_\nr (L) \to T_{\mf s_L^\vee}
\end{equation}
with finite kernel \cite[Lemma 3.7]{AMS3}. Using that and \cite[Lemma 3.10]{AMS3}, 
$\Phi (G_{\phi_L}^\circ, Z(L_c^\vee)^{\mb W_K,\circ})$ gives rise to a reduced root system 
$\Phi_{\mf s^\vee}$ in $X^* (T_{\mf s^\vee})$. The coroot system $\Phi^\vee_{\mf s^\vee}$ 
is contained in $X_* (T_{\mf s^\vee})$. That gives a root datum $\mc R_{\mf s^\vee}$, whose
basis can still be chosen arbitrarily. 

The construction of label functions $\lambda$ and $\lambda^*$ for $\mc R_{\mf s^\vee}$ 
consists of several steps. The numbers $\lambda (\alpha), \lambda^* (\alpha) \in \Z_{\geq 0}$
will be defined for all roots $\alpha \in \Phi_{\mf s^\vee}$. First, we pick
$t \in (Z(L_c^\vee)^{\mb I_K})^\circ_\Fr$ such that the reflection $s_\alpha$ fixes 
$t \phi_L (\Fr)$. Then $q \alpha$ lies in $\Phi \big( (G^\vee_{t \phi_L})^\circ,
Z(L_c^\vee)^{\mb W_K,\circ} \big)$ for some $q \in \Q_{>0}$, and $\lambda (\alpha), 
\lambda^* (\alpha)$ are related $\Q$-linearly to the labels $c(q\alpha), c^* (q\alpha)$ 
for a graded Hecke algebra \cite[\S 1]{AMS3} associated to
\begin{equation}\label{eq:2.3}
(G^\vee_{t \phi_L})^\circ = Z_{{G^\vee}_\Sc}(t \phi_L (\mb W_K))^\circ, 
Z(L_c^\vee)^{\mb W_K,\circ}, u_{\phi_L} \text{ and } \rho_L .
\end{equation}
These integers $c(q\alpha), c^* (q\alpha)$ were defined in \cite[Propositions 2.8, 2.10 and 
2.12]{LusCusp}, in terms of the adjoint action of $\log (u_{\phi_L})$ on
\[
\mr{Lie} (G^\vee_{t \phi_L})^\circ = \mr{Lie} \big( Z_{{G^\vee}_\Sc}(t \phi_L (\mb W_K)) \big) .
\]
In \cite[Proposition 3.11 and Lemma 3.12]{AMS3} it is described which 
$t \in (Z(L_c^\vee)^{\mb I_K})^\circ_\Fr$ we need to determine all labels: for each $\alpha 
\in \Phi_{\mf s^\vee}$ just one with $\alpha (t) = 1$, and sometimes one with $\alpha (t) = -1$. 

Finally, we choose an array $\vec{v}$ of nonzero complex numbers, one $v_j$ for every 
irreducible component of $\Phi_{\mf s^\vee}$. To these data one can attach an affine Hecke 
algebra $\mc H (\mc R_{\mf s^\vee}, \lambda, \lambda^*, \vec{v})$, as in \cite[\S 2]{AMS3}. 

The group $W_{\mf s^\vee}$ acts on $\Phi_{\mf s^\vee}$ and contains the Weyl group 
$W_{\mf s^\vee}^\circ$ of that root system. It admits a semidirect factorization
\[
W_{\mf s^\vee} = W_{\mf s^\vee}^\circ \rtimes \mf R_{\mf s^\vee} ,
\]
where $\mf R_{\mf s^\vee}$ is the stabilizer of a chosen basis of $\Phi_{\mf s^\vee}$.

Using the above identification of $T_{\mf s^\vee}$ with $\mf s_L^\vee$, we can reinterpret 
$\mc H (\mc R_{\mf s^\vee}, \lambda, \lambda^*, \vec{v})$ as an algebra 
$\mc H (\mf s_L^\vee, W_{\mf s^\vee}^\circ, \lambda, \lambda^*,\vec{v})$ 
whose underlying vector space is $\mc O (\mf s_L^\vee) \otimes \C [W_{\mf s^\vee}^\circ]$.
The group $\mf R_{\mf s^\vee}$ acts naturally on the based root datum $\mc R_{\mf s^\vee}$, 
and hence on \\
$\mc H (\mf s_L^\vee, W_{\mf s^\vee}^\circ, \lambda, \lambda^*,\vec{v})$ by algebra 
automorphisms \cite[Proposition 3.13.a]{AMS3}. From \cite[Proposition 3.13.b]{AMS3} 
we get a 2-cocycle $\natural : \mf R_{\mf s^\vee}^2 \to \C^\times$ and a twisted group 
algebra $\C[\mf R_{\mf s^\vee},\natural]$. Now we can define the twisted affine Hecke algebra
\begin{equation}\label{eq:2.4}
\mc H (\mf s^\vee, \vec{v}) := \mc H (\mf s_L^\vee, W_{\mf s^\vee}^\circ, \lambda, 
\lambda^*, \vec{v}) \rtimes \C[\mf R_{\mf s^\vee},\natural] .
\end{equation}
Up to isomorphism it depends only on $\mf s^\vee$ and $\vec{v}$ \cite[Lemma 3.14]{AMS3}.

The multiplication relations in $\mc H (\mf s^\vee, \vec{v})$ are based on the Bernstein
presentation of affine Hecke algebras, let us make them explicit. The vector space
$\C [W_{\mf s^\vee}^\circ] \subset \mc H (\mf s^\vee, \vec{v})$ is the Iwahori--Hecke algebra
$\mc H (W_{\mf s^\vee}^\circ, \vec{v}^{\, 2 \lambda})$, where $\vec{v}^{\, \lambda} (\alpha) 
= v_j^{\lambda (\alpha)}$ for the entry $v_j$ of $\vec{v}$ specified by $\alpha$. 
The conjugation action of $\mf R_{\mf s^\vee}$ on $W_{\mf s^\vee}^\circ$ induces an
action on $\mc H (W_{\mf s^\vee}^\circ, \vec{v}^{\, 2 \lambda})$.

The vector space $\mc O (\mf s_L^\vee)$ is embedded in $\mc H (\mf s^\vee, \vec{v})$ as
a maximal commutative subalgebra. The group $W_{\mf s^\vee}$ acts on it via its action
of $\mf s_L^\vee$, and every root $\alpha \in \Phi_{\mf s^\vee} \subset X^* (T_{\mf s^\vee})$ 
determines an element $\theta_\alpha \in \mc O (\mf s_L^\vee )^\times$, which does not
depend on the choice of the basepoint $\phi_L$ of $\mf s_L^\vee$ by 
\cite[Proposition 3.9.b]{AMS3}. For $f \in \mc O (\mf s_L^\vee)$
and a simple reflection $s_\alpha \in W_{\mf s^\vee}^\circ$ the following version of the
Bernstein--Lusztig--Zelevinsky relation holds:
\[
f N_{s_\alpha} - N_{s_\alpha} f = \big( (\mb z_j^{\lambda (\alpha)} \! - \mb z_j^{-\lambda (\alpha)}) 
+ \theta_{-\alpha} (\mb z_j^{\lambda^* (\alpha)} \! - \mb z_j^{-\lambda^* (\alpha)}) \big) 
(f - s_\alpha \cdot f) / (1 - \theta^{\, 2}_{-\alpha}) .
\]
Thus $\mc H (\mf s^\vee, \vec{v})$ depends on the following objects:
$\mf s_L^\vee, W_{\mf s^\vee}$ and the simple reflections therein, the label functions
$\lambda, \lambda^*$, the parameters $\vec{v}$ and the functions $\theta_\alpha :
\mf s_L^\vee \to \C^\times$ for reduced roots $\alpha \in \Phi_{\mf s^\vee}$. When
$W_{\mf s^\vee} \neq W_{\mf s^\vee}^\circ$, we also need the 2-cocycle $\natural$ on
$\mf R_{\mf s^\vee}$.

As in \cite[\S 3]{Lus-Gr}, the above relations entail that the centre of 
$\mc H (\mf s^\vee, \vec{v})$ is $\mc O (\mf s_L^\vee)^{W_{\mf s^\vee}}$. In other words,
the space of central characters for $\mc H (\mf s^\vee, \vec{v})$-representations
is $\mf s_L^\vee / W_{\mf s^\vee}$.

We note that when $\mf s^\vee$ is cuspidal,
\begin{equation}\label{eq:2.11}
\mc H (\mf s^\vee, \vec{z}) = \mc O (\mf s^\vee)
\end{equation}
and every element of $\mf s^\vee$ determines a character of $\mc H (\mf s^\vee,\vec{v})$.

The main reason for introducing $\mc H (\mf s^\vee, \vec{v})$ is the next result.
(See \cite[Definition 2.6]{AMS3} for the definition of tempered and essentially 
discrete series representations.)

\begin{thm}\label{thm:2.1} \textup{\cite[Theorem 3.16]{AMS3}} \\
Let $\mf s^\vee$ be an inertial equivalence class for $\Phi_e (G)$ and assume that the 
parameters $\vec{v}$ lie in $\R_{>1}$. Then there exists a canonical bijection
\[
\begin{array}{ccc}
\Phi_e (G)^{\mf s^\vee} & \to & \Irr (\mc H (\mf s^\vee, \vec{v})) \\
(\phi,\rho) & \mapsto & \bar M (\phi,\rho,\vec{v}) 
\end{array}
\]
with the following properties.
\begin{itemize}
\item $\bar M (\phi,\rho,\vec{v})$ is tempered if and only if $\phi$ is bounded.
\item $\phi$ is discrete if and only if $\bar M (\phi,\rho,\vec{v})$ is essentially
discrete series and the rank of $\Phi_{\mf s^\vee}$ equals 
$\dim_\C (T_{\mf s^\vee} / X_\nr (G))$.
\item The central character of $\bar M (\phi,\rho,\vec{v})$ is the product of
$\phi (\Fr)$ and a term depending only on $\vec{v}$ and a cocharacter associated to $u_\phi$.
\item Suppose that $\mb{Sc}(\phi,\rho) = (L, \chi_L \phi_L, \rho_L)$, where 
$\chi_L \in X_\nr (L)$. Then $\bar M (\phi,\rho,\vec{v})$ is a constituent of 
$\mr{ind}_{\mc H (\mf s^\vee_L, \vec{v})}^{\mc H (\mf s^\vee, \vec{v})} 
(L, \chi_L \phi_L,\rho_L)$.
\end{itemize}
\end{thm}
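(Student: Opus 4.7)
The plan is to reduce the classification to the geometric parametrization of irreducibles of graded affine Hecke algebras attached to cuspidal local systems, in the form extended by [AMS2] to disconnected complex reductive groups and unequal parameters. From the Bernstein presentation displayed above, the centre of $\mc H(\mf s^\vee,\vec v)$ is $\mc O(\mf s_L^\vee)^{W_{\mf s^\vee}}$, so irreducibles are classified by a central character in $\mf s_L^\vee / W_{\mf s^\vee}$ together with fibre data. First I would localise at a chosen central character $s \in \mf s_L^\vee$: a standard formal-completion argument yields a Morita equivalence between the completion of $\mc H(\mf s^\vee,\vec v)$ at $s$ and the completion of a graded affine Hecke algebra $\mb H_s$ built from the isotropy group $W_{\mf s^\vee,s}$, the corresponding localised root system, and the labels $\lambda, \lambda^*$ evaluated accordingly.

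The crucial identification is that $\mb H_s$ coincides with the graded Hecke algebra of \eqref{eq:2.3} attached to $\bigl((G^\vee_{t\phi_L})^\circ, Z(L_c^\vee)^{\mb W_K,\circ}, u_{\phi_L}, \rho_L\bigr)$, where $t \in (Z(L_c^\vee)^{\mb I_K})^\circ_\Fr$ is chosen so that $t\phi_L(\Fr) = s$. This compatibility is exactly what the definition of $\lambda, \lambda^*$ enforces. The classification theorem of [AMS2], generalising Lusztig's work in [LusCusp], then parametrises $\Irr(\mb H_s)$ for $\vec v \in \R_{>1}$ by $G^\vee_{t\phi_L}$-orbits of pairs $(u,\rho)$ with $u$ a unipotent element of $(G^\vee_{t\phi_L})^\circ$ and $\rho$ an irreducible representation of the appropriate component group. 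Propagating the $\mf R_{\mf s^\vee}$-action and the cocycle $\natural$ from [AMS3, Prop.~3.13] through the Morita equivalence delivers the parametrization of $\Irr(\mc H(\mf s^\vee,\vec v))$ as a whole.

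Matching this geometric data with $\Phi_e(G)^{\mf s^\vee}$ is the next step. Given $(\phi,\rho)$ with cuspidal support in $\mf s^\vee$, one may arrange after $G^\vee$-conjugation that $\phi|_{\mb I_K} = \phi_L|_{\mb I_K}$, so that $\phi(\Fr)$ determines a point of $\mf s_L^\vee$ and $u_\phi$ is a unipotent element of $(G^\vee_{\phi_L})^\circ$ commuting with it; the identification $\mc S_\phi \cong \pi_0 Z_{G^\vee_{\phi_L}}(u_\phi)$ then turns $\rho$ into the required enhancement. This yields the assignment $(\phi,\rho) \mapsto \bar M(\phi,\rho,\vec v)$, whose bijectivity follows from the geometric classification. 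The four enumerated properties are then read off from familiar features of the Bernstein presentation: temperedness of $\bar M$ is equivalent to the real part of its central character lying in the obtuse negative cone, which on the L-parameter side translates into boundedness of $\phi(\Fr)$; essential discreteness requires the central character to lie strictly inside that cone, giving discreteness of $\phi$ precisely once the rank condition $\dim_\C T_{\mf s^\vee}/X_\nr(G) = \mr{rk}\, \Phi_{\mf s^\vee}$ is imposed; the central-character formula is transparent since $s = t\phi_L(\Fr) = \phi(\Fr)$, with the $\vec v$-dependent correction encoding the cocharacter attached to $u_\phi$; and the parabolic-induction compatibility is the standard statement that $\bar M(\phi,\rho,\vec v)$ sits inside the module induced from the one-dimensional $\mc H(\mf s^\vee_L,\vec v) = \mc O(\mf s_L^\vee)$-module at $\chi_L \phi_L$, in accordance with \eqref{eq:2.11}.

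The principal technical obstacle is harmonising the disconnectedness of $G^\vee_{\phi_L}$ with the possibly unequal labels $\lambda, \lambda^*$: for connected groups with equal parameters the classification is [LusCusp], and the extension to the present generality, together with the correct bookkeeping of the twisted group algebra $\C[\mf R_{\mf s^\vee},\natural]$ produced by the $\mf R_{\mf s^\vee}$-action on the geometric side, is exactly the material of [AMS2, AMS3] that must be carefully assembled.
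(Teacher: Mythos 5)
First, note that the paper does not prove this statement at all: Theorem \ref{thm:2.1} is imported verbatim from \cite[Theorem 3.16]{AMS3}, so the only ``proof'' in the paper is the citation. Measured against the actual argument in \cite{AMS3}, your skeleton is the right one: localisation at a central character in $\mf s_L^\vee / W_{\mf s^\vee}$, reduction (via Lusztig-style first and second reduction theorems, adapted to the twisted algebra $\C[\mf R_{\mf s^\vee},\natural]$) to the graded Hecke algebras attached to the data \eqref{eq:2.3}, the classification of their irreducibles from \cite{AMS2} generalising \cite{LusCusp} to disconnected groups, and the translation of triples $(t\phi_L(\Fr), u, \rho)$ into enhanced L-parameters via $\mc S_\phi \cong \pi_0 (Z_{G^\vee_\phi}(u_\phi))$. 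That much is a fair summary of how the cited theorem is established.

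However, the way you propose to ``read off'' the first two bullet points would fail. Temperedness of an irreducible module over an affine (or graded) Hecke algebra is \emph{not} detected by its central character: the Casselman-type criterion involves all $\mc O (\mf s_L^\vee)$-weights occurring in the module, and distinct irreducibles with the same central character can differ in temperedness (already for the Iwahori--Hecke algebra of $SL_2$, the Steinberg-type and trivial-type constituents of an unramified principal series share a central character, yet only the former is tempered, indeed discrete series). The same objection applies to your criterion for essential discreteness. In \cite{AMS2,AMS3} these equivalences are not obtained from the central character but are carried through the reduction: one proves them first for the standard and irreducible modules of the graded Hecke algebras (using the weights of Lusztig's geometrically constructed standard modules and the explicit cocharacter attached to $u_\phi$) and then checks that the Morita equivalences and the $\mf R_{\mf s^\vee}$-twisting preserve temperedness and the discrete series property. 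So your parametrisation step is essentially the approach of the source, but the argument you give for the temperedness and discreteness statements needs to be replaced by this weight-level analysis; only the third bullet (the central character formula) and the fourth (induction from $\mc O (\mf s_L^\vee)$, via \eqref{eq:2.11}) are of the ``read off from the Bernstein presentation'' kind you describe.
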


The irreducible module $\overline{M} (\phi,\rho,\vec{v})$ in Theorem \ref{thm:2.1}
is a quotient of a ``standard module" $\overline{E} (\phi,\rho,\vec{v})$, also studied
in \cite[Theorem 3.15]{AMS3}. By \cite[Lemma 3.16.a]{AMS3} every such standard module
is a direct summand of a module obtained by induction from a standard module associated
to a discrete enhanced L-parameter for a Levi subgroup of $G$.

The action of $H^1 (\mb W_K, Z(G^\vee))$ on $\Phi_e (G)$ commutes with that of its
subgroup $X_\nr (G)$, so it induces an action on $\mf{Be}^\vee (G)$. For 
$z \in H^1 (\mb W_K, Z(G^\vee))$ we write that as $\mf s^\vee \mapsto z \mf s^\vee$.
Since $z \phi_L$ differs from $\phi_L$ only by central elements (of $\mc G^\vee$), almost 
all data used to construct $\mc H (\mf s^\vee, \vec{z})$ are the same for $z \mf s^\vee$:
\[
T_{z \mf s^\vee} = T_{\mf s^\vee} ,\; W_{z \mf s^\vee} = W_{\mf s^\vee} \text{ and }
\Phi_{z \mf s^\vee} = \Phi_{\mf s^\vee} .
\]
Furthermore the objects $\lambda, \lambda^*, \natural$ for $\mf s^\vee$ and $z \mf s^\vee$
can be identified, and the action of $z$ gives a bijection $\mf s_L^\vee \to 
z \mf s_L^\vee$. Thus $z$ canonically determines an algebra isomorphism
\begin{equation}\label{eq:2.12}
\begin{array}{ccccc}
\mc H (z) : & \mc H (\mf s^\vee ,\vec{v}) & \to & \mc H (z \mf s^\vee, \vec{v}) & \\
 & f N_w & \mapsto & (f \circ z^{-1}) N_w & 
 \qquad f \in \mc O (\mf s_L^\vee), w \in W_{\mf s^\vee} .
\end{array}
\end{equation}
This defines a group action of $H^1 (\mb W_K, Z(G^\vee))$ on the algebra 
$\bigoplus_{\mf s^\vee \in \mf S^\vee} \mc H (\mf s^\vee, \vec{v})$, where $\mf S^\vee$
is a union of $H^1 (\mb W_K, Z(G^\vee))$-orbits in $\mf{Be}^\vee (G)$.

Composition with $\mc H (z)^{-1}$ gives a functor between module categories:
\[
z \otimes : \mr{Mod}(\mc H (\mf s^\vee, \vec{v})) \to 
\mr{Mod}(\mc H (z\mf s^\vee, \vec{v})) .
\]

\begin{lem}\label{lem:2.3}
\enuma{
\item The bijections from Theorem \ref{thm:2.1} are $H^1 (\mb W_K, Z(G^\vee))$-equivariant:
\[
\bar M (z \phi,\rho,\vec{v}) = z \otimes \bar M (\phi,\rho,\vec{v}) \qquad
(\phi,\rho) \in \Phi_e (G)^{\mf s^\vee}, z \in H^1 (\mb W_K, Z(G^\vee)) .
\]
\item The same holds for the standard modules from \cite[Theorem 3.15]{AMS3}:
\[
\bar E (z \phi,\rho,\vec{v}) = z \otimes \bar E (\phi,\rho,\vec{v}) \qquad
(\phi,\rho) \in \Phi_e (G)^{\mf s^\vee}, z \in H^1 (\mb W_K, Z(G^\vee)) .
\]
\item Suppose that $\phi$ is bounded and that $z \in H^1 (\mb W_K, Z(G^\vee))$. Then
\[
\bar M (z \phi,\rho,\vec{v}) = \bar E (z \phi,\rho,\vec{v}) .
\]
}
\end{lem}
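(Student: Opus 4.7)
The plan for parts (a) and (b) is to revisit the construction of the standard module $\bar E(\phi,\rho,\vec{v})$ and its irreducible quotient $\bar M(\phi,\rho,\vec{v})$ in \cite[\S 3]{AMS3}, and observe that every piece of geometric data feeding into the construction is invariant under twisting by $z \in H^1(\mb W_K, Z(G^\vee))$. Concretely, since $(z\phi_L)(w) = z'(w)\phi_L(w)$ with $z'(w) \in Z(G^\vee)$, we have $G^\vee_{z\phi_L} = G^\vee_{\phi_L}$, $u_{z\phi_L} = u_{\phi_L}$ (as $z$ is trivial on $SL_2(\C)$), and $\mc S_{z\phi} = \mc S_\phi$, so that $\rho$ makes sense as an enhancement of both $\phi$ and $z\phi$. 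The auxiliary element $t \in (Z(L_c^\vee)^{\mb I_K})^\circ_\Fr$ used to locate each reflection $s_\alpha$ is also unaffected, because the reflections fixing $t\phi_L(\Fr)$ coincide with those fixing $tz'(\Fr)\phi_L(\Fr)$ for $z'(\Fr) \in Z(G^\vee)$. Consequently the root datum $\mc R_{\mf s^\vee}$, the labels $\lambda,\lambda^*$, the finite group $\mf R_{\mf s^\vee}$, and its 2-cocycle $\natural$ are literally identified for $\mf s^\vee$ and $z\mf s^\vee$.

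The isomorphism $\mc H(z)$ of \eqref{eq:2.12} is defined precisely so that, under this identification, the $\mc H(\mf s^\vee,\vec{v})$-module structure on the geometrically constructed module for $(\phi,\rho)$ is transported to the $\mc H(z\mf s^\vee,\vec{v})$-module structure on the same underlying vector space, now viewed as a module for $(z\phi,\rho)$: the only change is that each $f \in \mc O(\mf s_L^\vee)$ is replaced by $f \circ z^{-1}$, which matches pull-back along the bijection $\mf s_L^\vee \to z\mf s_L^\vee$. Since both $\bar E$ and $\bar M$ are built functorially from these data in \cite[Theorems 3.15, 3.16]{AMS3}, the isomorphism $\mc H(z)$ transports one to the other, yielding (a) and (b). For (c) I would combine these with the fact that for bounded $\phi$ the standard module already equals its irreducible quotient, i.e.\ $\bar E(\phi,\rho,\vec{v}) = \bar M(\phi,\rho,\vec{v})$. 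This follows from the Langlands classification for affine Hecke algebras (cf.\ \cite[Theorem 3.15]{AMS3}), because $\phi$ bounded forces $\bar M(\phi,\rho,\vec{v})$ to be tempered by Theorem \ref{thm:2.1}. Applying $z \otimes -$ to this identity and invoking (a) and (b) then gives $\bar M(z\phi,\rho,\vec{v}) = z \otimes \bar M(\phi,\rho,\vec{v}) = z \otimes \bar E(\phi,\rho,\vec{v}) = \bar E(z\phi,\rho,\vec{v})$.

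The main obstacle is the bookkeeping for (a) and (b): the construction of $\bar E$ in \cite{AMS3} passes through a graded Hecke algebra associated to $(G^\vee_{t\phi_L})^\circ$, $u_{\phi_L}$ and $\rho$ as in \eqref{eq:2.3}, and each step (choice of basepoint $\phi_L$ identifying $\mf s_L^\vee$ with $T_{\mf s_L^\vee}$, choice of $t$ per root, geometric realization via the cuspidal local system, and reassembly using the cocycle $\natural$) must be verified to commute with the central twist by $z'$. In every case the check reduces to the observation that $z'$ takes values in $Z(G^\vee)$, so it cannot alter centralizers, unipotent elements, cuspidal local systems, or the root-theoretic data, and the only resulting effect is the reparametrization of $\mf s_L^\vee$ already encoded in $\mc H(z)$. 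Granting this verification, the three claims follow formally.
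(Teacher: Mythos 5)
Your proposal is correct and follows essentially the same route as the paper: parts (a) and (b) rest on the observation that twisting by a cocycle valued in $Z(G^\vee)$ leaves all the geometric input of the construction in \cite[\S 3]{AMS3} unchanged (the paper phrases this as the ``complete analogy'' between the constructions for $\phi$ and $z\phi$, supplemented by a central character comparison and the special case $z \in X_\nr (G)$ from \cite[Theorem 3.15.e]{AMS3}), and part (c) is obtained exactly as you do, by combining the $z=1$ case of $\bar E = \bar M$ for bounded parameters (\cite[Theorem 3.15.f]{AMS3}) with (a) and (b).
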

\begin{proof}
(a) For $z \in X_\nr (G) \cong (Z(G^\vee)^{\mb I_K})^\circ_{\mb W_K}$ this was 
shown in \cite[Theorem 3.15.e]{AMS3}. For general $z$, Theorem \ref{thm:2.1} and the 
definition of $\mc H (z)^{-1}$ show that $\bar M (z \phi,\rho,\vec{v})$ and 
$z \otimes \bar M (\phi,\rho,\vec{v})$ have the same central character (an element of
$z \mf s_L^\vee / W_{\mf s^\vee}$). Then the complete analogy between the construction
of $\bar M (z \phi,\rho,\vec{v})$ and of $\bar M (\phi,\rho,\vec{v})$ in \cite{AMS3}
entails that $\bar M (z \phi,\rho,\vec{v}) = z \otimes \bar M (\phi,\rho,\vec{v})$. \\
(b) This can be shown in the same way as (a).\\
(c) For $z=1$ this is \cite[Theorem 3.15.f]{AMS3}. Apply parts (a) and (b) to that.
\end{proof}

Let us investigate the compatibility of Theorem \ref{thm:2.1} with suitable versions
of the Langlands classification. The Langlands classification for (extended) affine
Hecke algebras \cite[Corollary 2.2.5]{SolAHA} says, roughly, that every irreducible
module of $\mc H (\mf s^\vee,\vec{v})$ can be obtained from an irreducible tempered
module of a parabolic subalgebra, by first twisting with a strictly positive character,
then parabolic induction and subsequently taking the unique irreducible quotient.

Let $\phi \in \Phi (G)$ be arbitrary. The Langlands classification for L-parameters
\cite[Theorem 4.6]{SiZi} says that there exists a parabolic subgroup $P$ of $G$
with Levi factor $Q$, such that im$(\phi) \subset {}^L Q$ and $\phi$ can be written
as $z \phi_b$ with $\phi_b \in \Phi (Q)$ bounded and $Z \in X_\nr (Q)$ strictly
positive with respect to $P$. Furthermore $P$ is unique up to $G$-conjugation, and this
provides a bijection between L-parameters for $G$ and such triples $(P,\phi_b,z)$
considered up to $G$-conjugacy.

Let $\zeta$ be the character of $Z(\mc G^\vee_\Sc)$ determined by $\rho$, an extension 
of the character $\zeta_{\mc G} \in \Irr (Z(\mc G^\vee_\Sc)^{\mb W_F})$ which was used
to define $\mc G (F)$-relevance. Let $\zeta^{\mc Q} \in \Irr (Z(\mc Q^\vee_\Sc))$ be 
derived from $\zeta$ as in \cite[Lemma 7.4]{AMS1}. Let $p_\zeta \in \C [\mc S_\phi]$ and 
$p_{\zeta^{\mc Q}} \in \C [\mc S_\phi^{\mc Q}]$ be the central idempotents associated to 
these characters. By \cite[Theorem 7.10.b]{AMS1} there are natural isomorphisms 
\begin{equation}\label{eq:2.13}
p_{\zeta^{\mc Q}} \C [\mc S_{\phi_b}^{\mc Q}] = 
p_{\zeta^{\mc Q}} \C [\mc S_{z \phi_b}^{\mc Q}] \to p_\zeta \C [\mc S_{\phi}] .
\end{equation}
Hence $\phi$ and $\phi_b$ admit the same relevant enhancements.

\begin{prop}\label{prop:2.4}
Let $\vec{z} \in \R_{>1}^d, (\phi,\rho) \in \Phi_e (G)$ and let $P,Q$ be as above.
\enuma{
\item $\bar{E} (\phi,\rho,\vec{v}) \cong \mc H (\mf s^\vee,\vec{z}) 
\underset{\mc H (\mf s_Q^\vee, \vec{v})}{\otimes}  \bar{E}^Q (\phi,\rho,\vec{v})$.
\item $\bar{M} (\phi,\rho,\vec{v})$ is the unique irreducible quotient of
$\mc H (\mf s^\vee,\vec{z}) \underset{\mc H (\mf s_Q^\vee, \vec{v})}{\otimes} \bar{M}^Q 
(\phi,\rho,\vec{v}) \cong \bar{E} (\phi,\rho,\vec{v})$.
\item The $\mc H (\mf s_Q^\vee, \vec{v})$-module $\bar{M}^Q (\phi,\rho,\vec{v})$
is a twist of a tempered module by a character which is strictly positive with
respect to $P$.
}
\end{prop}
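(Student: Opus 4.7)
The plan is to align the Langlands classification for enhanced L-parameters \cite[Theorem 4.6]{SiZi} with the Langlands classification for modules of extended affine Hecke algebras \cite[Corollary 2.2.5]{SolAHA}, using Theorem \ref{thm:2.1} and Lemma \ref{lem:2.3} as the bridge. From \cite[Theorem 4.6]{SiZi} write $\phi = z \phi_b$ with $\phi_b \in \Phi (Q)$ bounded and $z \in X_\nr (Q)$ strictly positive with respect to $P$; via \eqref{eq:2.13} we view $\rho$ as an enhancement of both $\phi$ and $\phi_b$.

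For (c), Theorem \ref{thm:2.1} applied to $Q$ shows that $\bar{M}^Q (\phi_b, \rho, \vec{v})$ is tempered, since $\phi_b$ is bounded. Applying Lemma \ref{lem:2.3}(a) to $Q$ in place of $G$ yields
\[
\bar{M}^Q (\phi, \rho, \vec{v}) \;=\; z \otimes \bar{M}^Q (\phi_b, \rho, \vec{v}).
\]
By \eqref{eq:2.12} the functor $z \otimes$ only translates the $\mc{O} (\mf{s}_{QL}^\vee)$-action by the character of $T_{\mf{s}_Q^\vee}$ induced from $z$; that character is strictly positive with respect to $P$ by the Langlands classification for L-parameters.

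For (a), I would revisit \cite[Theorem 3.15 and Lemma 3.16.a]{AMS3}, where $\bar{E} (\phi, \rho, \vec{v})$ is obtained as a summand of parabolic induction from a standard module attached to a discrete enhanced L-parameter on a Levi $L' \subset G$. Under the Langlands classification $L'$ can be chosen inside $Q$, so that transitivity of parabolic induction through the chain $\mc{H} (\mf{s}_{L'}^\vee, \vec{v}) \subset \mc{H} (\mf{s}_Q^\vee, \vec{v}) \subset \mc{H} (\mf{s}^\vee, \vec{v})$, together with Lemma \ref{lem:2.3}(b) to absorb the $z$-twist, gives the isomorphism $\bar{E} (\phi, \rho, \vec{v}) \cong \mc{H} (\mf{s}^\vee, \vec{v}) \otimes_{\mc{H} (\mf{s}_Q^\vee, \vec{v})} \bar{E}^Q (\phi, \rho, \vec{v})$.

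For (b), Lemma \ref{lem:2.3}(c) applied to $\phi_b$ and the twist $z$ yields $\bar{M}^Q (\phi, \rho, \vec{v}) = \bar{E}^Q (\phi, \rho, \vec{v})$; combined with (a) this identifies $\mc{H} (\mf{s}^\vee, \vec{v}) \otimes_{\mc{H} (\mf{s}_Q^\vee, \vec{v})} \bar{M}^Q (\phi, \rho, \vec{v})$ with $\bar{E} (\phi, \rho, \vec{v})$. By (c) it is parabolic induction of a tempered module twisted by a strictly positive character, so \cite[Corollary 2.2.5]{SolAHA} produces a unique irreducible quotient — the Langlands quotient. Since $\bar{M} (\phi, \rho, \vec{v})$ is already known to be an irreducible quotient of $\bar{E} (\phi, \rho, \vec{v})$ (noted immediately after Theorem \ref{thm:2.1}), it must coincide with that Langlands quotient. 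The main technical hurdle is to verify that strict positivity of $z$ with respect to $P$ in the sense of \cite{SiZi} matches the positivity condition on the cocharacter lattice in \cite[\S 2.2]{SolAHA}, via the embedding $X_\nr (Q) \hookrightarrow T_{\mf{s}_Q^\vee}$ and the root-system identifications from Section \ref{sec:2} together with Corollary \ref{cor:1.3}; this reduces to matching the roots of $\Phi_{\mf{s}^\vee}$ arising from $Q^\vee$ against those lying outside it.
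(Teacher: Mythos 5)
Your part (c) is essentially the paper's argument (Lemma \ref{lem:2.3}.a plus Theorem \ref{thm:2.1} plus \eqref{eq:2.12}), but parts (a) and (b) have a genuine gap. The paper's proof of (a) is not just an invocation of \cite[Lemma 3.16.a]{AMS3}: that lemma only applies under the hypothesis \eqref{eq:2.14}, i.e.\ $\epsilon_{u_\phi,j}(z,\vec{v}) \neq 0$, and verifying this is the actual content of the proof. It reduces to the determinant condition \eqref{eq:2.15} for an operator built from $P,Q,\log (u_\phi)$ and a simple factor of $G_{\phi_b}$, and the nonvanishing is obtained from the strict positivity of $z$ with respect to $P$ via \cite[Lemma A.2]{AMS2}. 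Your proposal never mentions this condition, and your substitute route --- writing $\bar E (\phi,\rho,\vec v)$ as a summand of induction from a discrete standard module on some Levi $L' \subset Q$ and then using transitivity of induction plus Lemma \ref{lem:2.3}.b --- does not deliver the claimed isomorphism: a direct-summand relation on both sides does not identify $\bar E (\phi,\rho,\vec v)$ with $\mc H (\mf s^\vee,\vec v) \otimes_{\mc H (\mf s_Q^\vee,\vec v)} \bar E^Q (\phi,\rho,\vec v)$, and Lemma \ref{lem:2.3}.b only says that standard modules are compatible with $z$-twists, not that induction of $\bar E^Q$ is $\bar E$. So (a), and hence the identification of the induced module in (b), is not established.

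For (b) you take a genuinely different route from the paper once the isomorphism of (a) is granted: you want to apply the Langlands classification for affine Hecke algebras \cite[Corollary 2.2.5]{SolAHA} to the induction of the tempered module twisted by a strictly positive character, and then match $\bar M (\phi,\rho,\vec v)$ with the Langlands quotient because it is already known to be an irreducible quotient of $\bar E (\phi,\rho,\vec v)$. This could work, but it hinges precisely on the positivity comparison you defer at the end (that $z$, strictly positive in the sense of \cite{SiZi}, induces a character of $T_{\mf s_Q^\vee}$ strictly positive in the sense of \cite[\S 2.2]{SolAHA} for the relevant parabolic subalgebra); the paper does spell out this comparison, but only in part (c), via the cone of the root system $\Phi \big( Z_{\mc Q^\vee_\Sc}(\phi|_{\mb I_F})^\circ, T_{\mf s^\vee_L}\big)$. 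The paper's own proof of (b) avoids \cite{SolAHA} altogether: it reduces, via \cite[Lemma 1.3 and Theorem 1.4]{AMS3}, to modules over a graded Hecke algebra with one parameter $r_j = \log (z_j) \neq 0$ and quotes \cite[Theorem 3.20.a]{AMS2} for the uniqueness of the irreducible quotient. To repair your write-up, the essential missing steps are the verification of \eqref{eq:2.14}--\eqref{eq:2.15} in (a) and an actual proof (not just a flag) of the positivity matching you rely on in (b).
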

\begin{proof}
(a) In view of \eqref{eq:2.13}, the statement is an instance of \cite[Lemma 3.16.a]{AMS3}.
But to apply that lemma we need to check that its condition
\begin{equation}\label{eq:2.14}
\epsilon_{u_\phi,j}(z,\vec{v}) \neq 0  
\end{equation}
is fulfilled. Write $r_j = \log (v_j) \in \R_{>0}$. Via the definitions in 
\cite[pages 28 and 12]{AMS3} and using the notations from \cite[\S 1]{AMS3}, 
\eqref{eq:2.14} boils down to 
\begin{equation}\label{eq:2.15}
\det \Big( \mr{ad} \big( \matje{r_j}{0}{0}{-r_j} - \log (z_j) \big) - 2 r_j \Big) \neq 0 .
\end{equation}
Here we take the determinant of an endomorphism of a vector space defined in terms
of $P,Q, \log (u_\phi)$ and a semisimple factor $G^\circ_{\phi_b ,j}$ of $G_{\phi_b}$.
This brings us to the setting of modules for a graded Hecke algebra $\mh H^Q (r_j)$
with one parameter $r_j > 0$, associated a Levi subgroup of the complex group
$G_{\phi_b ,j}$. Using that $- \log (z_j)$ is strictly negative with respect to the
parabolic subgroup of $G_{\phi_b ,j}$ determined by $P$, it is not hard to verify
\eqref{eq:2.15} -- this was done in \cite[Lemma A.2]{AMS2}.\\
(b) By  part (a) and Lemma \ref{lem:2.3}.c 
\[
\bar{E} (\phi,\rho,\vec{v}) \cong \mc H (\mf s^\vee,\vec{z}) 
\underset{\mc H (\mf s_Q^\vee, \vec{v})}{\otimes} \bar{E}^Q (\phi,\rho,\vec{v}) =
\bar{E} (\phi,\rho,\vec{v}) \cong \mc H (\mf s^\vee,\vec{z}) 
\underset{\mc H (\mf s_Q^\vee, \vec{v})}{\otimes} \bar{M}^Q (\phi,\rho,\vec{v}) .
\]
Via the construction of $\bar{E} (\phi,\rho,\vec{v})$ in \cite[\S 3]{AMS3} and
\cite[Lemma 1.3 and Theorem 1.4]{AMS3}, we can reduce the statement about quotients
to modules over a graded Hecke algebra $\mh H (r_j)$ with one parameter $r_j$, 
associated to the complex group $G_{\phi_b ,j}$. Since $r_j = \log (z_j) \neq 0$, 
\cite[Theorem 3.20.a]{AMS2} applies, and says that $\bar{E} (\phi,\rho,\vec{v})$
has a unique irreducible quotient, namely $\bar{M} (\phi,\rho,\vec{v})$.\\
(c) By Lemma \ref{lem:2.3}.a  
\[
\bar{M}^Q (\phi,\rho,\vec{v}) = z \otimes \bar{M}^Q (\phi,\rho,\vec{v}) ,
\]
and by Theorem \ref{thm:2.1} $\bar{M}^Q (\phi,\rho,\vec{v})$ is tempered.
From \eqref{eq:2.12} we see that $z \otimes$ is a twist by a character which on
$P$-positive elements of $X^* (T_{\mf s^\vee_L})$ takes values in $\R_{>1}$.  
Here $P$-positive refers to those elements of $X^* (T_{\mf s^\vee_L})$ which lie in
the interior of the positive cone associated to the root system
$\Phi (Z_{\mc Q_\Sc^\vee} (\phi |_{\mb I_F})^\circ, T_{\mf s^\vee_L})$ (constructed
similarly as $\Phi_{\mf s^\vee}$) with the simple roots determined by $P$.
\end{proof}

Suppose that $K' / K$ is a finite extension inside the separable closure $K_s$. 
Suppose also that $\mc G'$ is a connected reductive $K'$-group and that 
$\mc G = \mr{Res}_{K' / K}(\mc G')$, the Weil restriction of $\mc G'$. Then 
\begin{equation}\label{eq:2.7}
\mc G (K) = \mc G' (K') \quad \text{and} \quad
\mc G^\vee = \mr{ind}_{\mb W_{K'}}^{\mb W_K} (\mc G'^\vee).
\end{equation}
According to \cite[Proposition 8.4]{Bor}, Shapiro's lemma yields a natural bijection
between L-parameters for the $K$-group $\mc G$ and for the $K'$-group $\mc G'$.
By \cite[Lemma A.3]{FOS} it extends naturally to a bijection
\begin{equation}\label{eq:2.5}
\Phi_e (\mc G (K)) \to \Phi_e (\mc G' (K')) , 
\end{equation}
which preserves cuspidality. For Levi $K$-subgroup $\mc L$ of $\mc G$ there is 
a unique Levi $K'$-subgroup $\mc L'$ of $\mc G'$ with $\mc L (K) = \mc L' (K')$.
By \cite[Proposition 3.1]{ABPSLLC} there are natural isomorphisms
\begin{equation}\label{eq:2.8}
N_{G'^\vee}(L'^\vee \rtimes \mb W_{K'}) / L'^\vee \cong W(\mc G', \mc L') \cong
W (\mc G, \mc L) \cong N_{G^\vee}(L^\vee \rtimes \mb W_K) / L^\vee .
\end{equation}
Applying \ref{eq:2.5} to all Levi subgroups of $\mc G (K)$ and invoking \eqref{eq:2.8}, 
we obtain a bijection
\begin{equation}\label{eq:2.6}
\mf{Be}^\vee (\mc G (K)) \to \mf{Be}^\vee (\mc G' (K')) :
\mf s^\vee \mapsto \mf s'^\vee .
\end{equation}

\begin{lem}\label{lem:2.2}
The algebra $\mc H (\mf s^\vee, \vec{v})$ from \eqref{eq:2.4} is invariant under 
Weil restriction of reductive groups. That is, in the above situation there is
a natural isomorphism $\mc H (\mf s^\vee, \vec{v}) \cong \mc H (\mf s'^\vee, \vec{v})$.
\end{lem}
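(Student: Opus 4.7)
The plan is to transport each piece of data underlying the construction of $\mc H(\mf s^\vee,\vec{v})$ in \eqref{eq:2.4} across the bijection \eqref{eq:2.6}, and then appeal to \cite[Lemma 3.14]{AMS3}, which says that the algebra depends only on these pieces up to isomorphism. The ingredients to be matched are: the cuspidal Bernstein component $\mf s_L^\vee$ (identified via a basepoint with the torus $T_{\mf s^\vee}$); the group $W_{\mf s^\vee}$ together with its factorization $W_{\mf s^\vee}^\circ \rtimes \mf R_{\mf s^\vee}$; the root system $\Phi_{\mf s^\vee}$; the label functions $\lambda,\lambda^*$; the functions $\theta_\alpha$; the 2-cocycle $\natural$; and the parameters $\vec{v}$.

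The first items transfer almost formally. Because $L = \mc L(K) = \mc L'(K') = L'$ as topological groups, we have $X_\nr(L) = X_\nr(L')$, and the restriction of \eqref{eq:2.5} to cuspidal parameters is $X_\nr(L)$-equivariant, so it identifies $\mf s_L^\vee$ with $\mf s'^{\vee}_{L'}$ and hence $T_{\mf s^\vee}$ with $T_{\mf s'^\vee}$. The isomorphism \eqref{eq:2.8} transports $W_{\mf s^\vee}$ to $W_{\mf s'^\vee}$ compatibly with these torus actions. The parameters $\vec{v}$ are chosen identically on the two sides.

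The substantial step is the identification of $\Phi_{\mf s^\vee}$, the labels $\lambda,\lambda^*$, the functions $\theta_\alpha$ and the cocycle $\natural$. All of these are constructed from the complex-group data in \eqref{eq:2.3}: the (possibly disconnected) reductive group $(G^\vee_{t\phi_L})^\circ$, its Levi subgroup $Z(L_c^\vee)^{\mb W_K,\circ}$, the unipotent element $u_{\phi_L}$ and the enhancement $\rho_L$. The Shapiro isomorphism behind \eqref{eq:2.5}, spelled out in \cite[Lemma A.3]{FOS}, produces a natural isomorphism $Z^1_{G^\vee_\Sc}(\phi_L|_{\mb W_K}) \cong Z^1_{G'^{\vee}_\Sc}(\phi'_{L'}|_{\mb W_{K'}})$: an element of the induced dual group $\mr{ind}_{\mb W_{K'}}^{\mb W_K}(G'^\vee)$ centralising the image of $\phi_L|_{\mb W_K}$ is completely determined by any of its coordinates, and this identification respects $Z(L_c^\vee)^{\mb W_K,\circ}$, $u_{\phi_L}$ and $\rho_L$. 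The same applies after twisting $\phi_L$ by the elements $t \in (Z(L_c^\vee)^{\mb I_K})^\circ_\Fr$ needed in \cite[Proposition 3.11 and Lemma 3.12]{AMS3} to determine all labels. Thus the root system $\Phi_{\mf s^\vee}$, the integers $c(q\alpha), c^*(q\alpha)$ from \cite[Propositions 2.8, 2.10, 2.12]{LusCusp} and hence $\lambda,\lambda^*$, the Bernstein functions $\theta_\alpha$, and the decomposition $W_{\mf s^\vee}^\circ \rtimes \mf R_{\mf s^\vee}$ with its 2-cocycle $\natural$, all coincide with their primed counterparts.

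The main obstacle is precisely this verification that the Shapiro bijection is sufficiently canonical as a map of complex reductive groups equipped with their Levi subgroups, unipotent orbits and cuspidal local systems; once this is granted, reading the Bernstein--Lusztig--Zelevinsky presentation term by term yields the desired algebra isomorphism $\mc H(\mf s^\vee,\vec{v}) \cong \mc H(\mf s'^\vee,\vec{v})$.
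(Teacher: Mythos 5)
Your proposal is correct and follows essentially the same route as the paper: the paper's proof likewise rests on the identification $G^\vee_{\Sc} = \mr{ind}_{\mb W_{K'}}^{\mb W_K}(G'^\vee_{\Sc})$ giving $Z^1_{G'^\vee_\Sc}(t\phi_L|_{\mb W_{K'}}) = Z^1_{G^\vee_\Sc}(t\phi_L|_{\mb W_K})$ for all the relevant twists $t$, followed by the observation that the entire construction of $\mc H(\mf s^\vee,\vec v)$ takes place inside these groups. Your version merely spells out more explicitly the list of ingredients (torus, $W_{\mf s^\vee}$, roots, labels, $\theta_\alpha$, $\natural$, $\vec v$) that this identification carries over.
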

\begin{proof}
Let $(t \phi_L, \rho_L) \in \Phi_\cusp (\mc L (K))$ be as in \eqref{eq:2.3}. Via
\eqref{eq:2.5} we can regard it also as a cuspidal L-parameter for a (unique) Levi 
subgroup $\mc L' (K') \subset \mc G' (K')$. From \eqref{eq:2.7} we see that
${G^\vee}_\Sc = \mr{ind}_{\mb W_{K'}}^{\mb W_K} ({G'^\vee}_\Sc)$ and 
\[
G'^\vee_{t \phi_L} = Z^1_{{G'^\vee}_\Sc} \big(t \phi_L \big|_{\mb W_{K'}} \big) =
Z^1_{{G^\vee}_\Sc} \big(t \phi_L \big|_{\mb W_K} \big) = G^\vee_{t \phi_L} .
\]
The entire construction of $\mc H (\mf s^\vee, \vec{v})$ in \cite[\S 3.2]{AMS3},
as recalled above, takes place in groups $G^\vee_{t \phi_L}$ for some
$t \in (Z(L_c^\vee)^{\mb I_K})^\circ_\Fr$. So if we start with $\mc G'$ and $K'$ 
instead of $\mc G$ and $K$, we end up with the same algebra.
\end{proof}

\section{Hecke algebras for unipotent representations}
\label{sec:3}

We preserve the setup from the previous sections. Since we will discuss unipotent 
representations, it is convenient to require (for the remainder of the paper) that $\mc G$ 
splits over an unramified extension of $K$. A large part of this section will be based on 
\cite{LusUni1,Mor2}. Although Lusztig works with split simple adjoint $K$-groups in 
\cite{LusUni1}, most of the first section of that paper holds just as well for our $\mc G$.

\subsection{Buildings, facets and associated groups} \

We denote the enlarged Bruhat--Tits building of $G$ by $\mc B (\mc G,K)$. It is the Cartesian
product of the semisimple Bruhat--Tits building $\mc{BT}(\mc G,K)$ and the vector space 
$X_* (Z_G (S)) \otimes_\Z \R$. Whereas $Z(G)$ acts trivially on $\mc{BT} (\mc G,K)$, only its 
maximal compact subgroup $Z (G)_\cpt$ fixes $\mc B (\mc G,K)$ pointwise. 

Let $\Sigma = \Phi \times \Z$ be the affine root system of $(\mc G,\mc S)$, which projects onto
the finite root system $\Phi = \Phi (\mc G,\mc S)$. Let $\mh A = X_* (\mc S) \otimes_\Z \R$
be the apartment of $\mc B (\mc G,K)$ associated to $\mc S$. Let $C_0$ be the unique
chamber in the positive Weyl chamber in $\mh A$ (determined by $\Delta$), whose closure 
contains $0$. Let $\Delta_\af$ be the set of simple affine roots in $\Sigma$ determined
by $C_0$. It contains $\Delta$ and one additional affine reflection for every simple
factor of $G$ which is not a torus and not anisotropic. The associated set of simple
affine reflections $S_\af$ generates an affine Weyl group $W_\af$. 
The standard Iwahori subgroup of $G$ is $P_{C_0}$ and the Iwahori--Weyl group of $(G,S)$ is
\begin{equation}\label{eq:3.31}
W := N_G (S) / (N_G (S) \cap P_{C_0}) \cong Z_G (S) / (Z_G (S) \cap P_{C_0}) \rtimes W(G,S) .
\end{equation}
We note that it acts on $\mh A$, with $W(G,S)$ acting linearly and $Z_G (S) / 
(Z_G (S) \cap P_{C_0})$ by translations. The kernel of this action is the finite subgroup
$Z_G (S)_\cpt / Z_{P_{C_0}}(S)$. 

Furthermore $W$ contains $W_\af$ as the subgroup supported on the kernel of the Kottwitz 
homomorphism for $G$. The group $\Omega := \{ w \in W : w (C_0) = C_0\}$ 
forms a complement to $W_\af$:
\begin{equation}\label{eq:3.10}
W = W_\af \rtimes \Omega .
\end{equation}
In particular $\Omega \cong W / W_\af$, which is isomorphic to the image of the Kottwitz
homomorphism for $G$, a subquotient of $\Irr (Z (G^\vee))$. This shows that $\Omega$ is abelian.

Every facet $\mf f$ of $\mc B (\mc G,K)$ is the Cartesian product of $X_* (Z_G (S)) \otimes_\Z \R$ 
and a facet in $\mc{BT}(\mc G,K)$. Let $P_{\mf f} \subset G$ be the parahoric subgroup 
associated to $\mf f$, and let $U_{\mf f}$ be its pro-unipotent radical. Then 
$\overline P_{\mf f} = P_{\mf f} / U_{\mf f}$ can be regarded as the $k$-points of a connected 
reductive group. More precisely, Bruhat and Tits
\cite{BrTi2} constructed an $\mf o_K$-model $\mc G_{\mf f}^\circ$ of $\mc G$, such that
$P_{\mf f} = \mc G_{\mf f}^\circ (\mf o_K)$. Then $\overline P_{\mf f}$ is the maximal reductive
quotient of $\mc G_{\mf f}^\circ (\mf o_K / \varpi_K \mf o_K)$. 
Let $\hat P_{\mf f}$ be the pointwise stabilizer of $\mf f$ in $G$. It contains $P_{\mf f}$ with
finite index, and $\hat P_{\mf f} / U_{\mf f}$ is the group of $k$-rational points of a
(possibly disconnected) reductive group. As $P_{\mf f}$ is a characteristic subgroup of
$\hat P_{\mf f}$, these two have the same normalizer in $G$.

Since $G$ acts transitively on the collection of chambers of $\mc B (\mc G,K)$, we may assume
without loss of generality 
that $\mf f$ is contained in the closure of $C_0$. Let $\Sigma_{\mf f}$ be the set of
affine roots that vanish on $\mf f$ and let $J := \Delta_\af \cap \Sigma_{\mf f}$ be its
subset of simple affine roots. The associated set of (affine) reflections $\{ s_j : j \in J\}$
generates a finite Weyl group $W_J$, which can be identified with the Weyl group of the
$k$-group $\overline P_{\mf f}$ (with respect to the torus $\mc S (k)$).

Let $\Phi^c_{\mf f}$ be the set of roots for $(G,S)$ that are constant on $\mf f$,
a parabolic root subsystem of $\Phi (G,S)$. Let $\mc L_{\mf f}$ be the Levi $K$-subgroup 
of $\mc G$ determined by $\mc S$ and $\Phi^c_{\mf f}$. By \cite[Theorem 2.1]{Mor2} 
\[
P_{L,\mf f} := P_{\mf f} \cap L_{\mf f}
\]
is a maximal parahoric subgroup of $L_{\mf f}$ and
\begin{equation}\label{eq:3.13}
\hat P_{L,\mf f} / P_{L,\mf f} = (\hat{P}_{\mf f} \cap L_{\mf f}) / (P_{\mf f} \cap L_{\mf f})
\cong \hat P_{\mf f} / P_{\mf f}. 
\end{equation}
Let $\Phi_{\mf f}$ be the image of $\Sigma_{\mf f}$ in $\Phi (G,S)$. Its closure $(\Q \Phi_{\mf f}) 
\cap \Phi (G,S)$ equals $\Phi^c_{\mf f}$. Although $\Phi^c_{\mf f}$ and $\Phi_{\mf f}$ have the 
same rank, it is quite possible that they have different Weyl groups. We write
\[
\Omega_{\mf f} = \{ \omega \in \Omega : \omega (\mf f) = \mf f \} =
\{ \omega \in \Omega : P_{\mf f} \omega P_{\mf f} \subset N_G (P_{\mf f}) \}
\cong N_G (P_{\mf f}) / P_{\mf f} .
\]
Since $\Omega$ is abelian (see the lines following\eqref{eq:3.10}), so is $\Omega_{\mf f}$.

Next we analyse a group that underlies a relevant Hecke algebra:
\begin{equation}\label{eq:3.2}
W(J,\sigma) := N_W (W_J) / W_J \cong \{ w \in W : w (J) = J \} ,
\end{equation}
This does not depend on $\sigma$, which will only be introduced later. We include $\sigma$
in the notation to comply with \cite{Mor2}. 

It is easy to see that the right hand side of \eqref{eq:3.2} contains $\Omega_{\mf f}$.
When $P_{\mf f}$ is a maximal parahoric subgroup of $G$, $W(J,\sigma)$ coincides with 
$\Omega_{\mf f}$. 
Otherwise $G$ has at least one simple factor such that $\Delta_\af \setminus J$ contains 
two vertices belonging to that factor. Let $\Delta_{\mf f, \af} \subset \Delta_\af \setminus J$
be the collection of all indices belonging to such simple factors of $G$. According to 
\cite[\S 1.18]{LusUni1}, every $i \in \Delta_{\mf f, \af}$ corresponds to a unique order two 
element $s_i \in W$, and we write $S_{\mf f,\af} = \{ s_i : i \in \Delta_{\mf f, \af} \}$.
This set generates an affine Weyl group $W_\af (J,\sigma)$ in $W(J,\sigma)$, and
\begin{equation}\label{eq:3.4}
W(J,\sigma) = W_\af (J,\sigma) \rtimes \Omega_{\mf f} .
\end{equation}
The Coxeter group $W_\af (J,\sigma)$ is the direct product of irreducible affine
Weyl groups, one for every simple factor of $G$ to which at least two elements of
$\Delta_\af \setminus J$ belong. Hence it can be written as
\begin{equation}\label{eq:3.7}
W_\af (J,\sigma) = X(J) \rtimes W^\circ (J,\sigma) ,
\end{equation}
where $X(J)$ is a lattice (in $X_* (Z_G (S)) \cong Z_G(S) / Z_G (S)_\cpt$) and 
$W^\circ (J,\sigma)$ is a finite Weyl group. The subgroup $X(J) \subset W_\af (J,\sigma)$
is canonically defined, namely as the set of elements whose conjugacy class is finite.

The set of simple reflections $S_{\mf f} = \{ s_i : i \in \Delta_{\mf f} \}$
for $W^\circ (J,\sigma)$ is a subset of $S_{\mf f,\af}$, such that the affine extension
of $\Delta_{\mf f}$ is $\Delta_{\mf f,\af}$. We note that 
\begin{multline}\label{eq:3.27}
|S_{\mf f}| = |\Delta_{\mf f}| = \mr{rk}(X(J)) = \dim (\mf f \cap \mc{BT}(\mc G,K)) 
= \mr{rk}_K (\mc L_{\mf f} / Z(\mc G)) = \mr{rk}\,\Phi (G,L_{\mf f}) . 
\end{multline} 
The set $\Delta_{\mf f} \cup J$ determines
a unique vertex $x_{\mf f}$ of $\bar{\mf f} \cap \mc{BT}(\mc G,K)$, and
\begin{equation}\label{eq:3.6}
W^\circ (J,\sigma) = \{ w \in W_\af (J,\sigma) : w (x_{\mf f}) = x_{\mf f} \} . 
\end{equation}
Let $\mh A_m \subset \mh A$ be the product of the standard apartments of those
simple factors of $G$ for which $\Delta_{\mf f}$ has just one element. Let 
$\mh A_{\mf f} \subset \mh A$ be the product of the standard apartments of the 
remaining simple factors of $G$ (those for which $\Delta_{\mf f}$ has more than one
element or which are isotropic tori). We have a $W$-stable decomposition
\begin{equation}\label{eq:3.9}
\mh A = \mh A_m \times \mh A_{\mf f} . 
\end{equation}
The group $\Omega_{\mf f}$ acts by conjugation on the normal subgroup $W_\af (J,\sigma)$
of $W(J,\sigma)$. This action stabilizes $S_{\mf f,\af}$ setwise, and the pointwise
stabilizer $\Omega_{\mf f}^1$ of $S_{\mf f,\af}$ consists of those $\omega \in \Omega_{\mf f}$ 
which fix the image of $\mf f$ in $\mc{BT}(\mc G,K)$ pointwise. Since $\Omega_{\mf f}$ 
is abelian and the centre of $W_\af (J,\sigma)$ is trivial (as for every affine Weyl group),
\begin{equation}\label{eq:3.11}
Z(W(J,\sigma)) = \Omega_{\mf f}^1 .
\end{equation}
Let $\Omega_{\mf f,\tor}$ be the pointwise stabilizer of $\mf f$ in 
$\Omega_{\mf f}$, a central subgroup of $W(J,\sigma)$. Since $W$ acts on $\mh A$ with
finite stabilizers, $\Omega_{\mf f,\tor}$ is finite. We note that
\begin{equation}\label{eq:3.14}
\hat P_{\mf f} / P_{\mf f} = \Omega_{\mf f,\tor} . 
\end{equation}

\begin{prop}\label{prop:3.2}
There exists a lattice $X_{\mf f}$ in $X_* (Z_G (S)) \cap \mh A_{\mf f}$ such that
\[
W(J,\sigma) / \Omega_{\mf f,\tor} = W_\af (J,\sigma) \rtimes 
\Omega_{\mf f} / \Omega_{\mf f,\tor} \cong W^\circ (J,\sigma) \ltimes X_{\mf f}.
\]
\end{prop}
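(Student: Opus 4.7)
The plan is to produce $X_{\mf f}$ as an abelian normal subgroup of $W(J,\sigma)/\Omega_{\mf f,\tor}$ complementing a conjugate of $W^\circ(J,\sigma)$, then verify it is a lattice embedding into $\mh A_{\mf f}$.

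First, the equality $W(J,\sigma)/\Omega_{\mf f,\tor} = W_\af(J,\sigma) \rtimes \bigl(\Omega_{\mf f}/\Omega_{\mf f,\tor}\bigr)$ follows immediately from \eqref{eq:3.4}: by \eqref{eq:3.11} the subgroup $\Omega_{\mf f,\tor} \subseteq \Omega_{\mf f}^1 = Z(W(J,\sigma))$ is central, and $\Omega_{\mf f} \cap W_\af(J,\sigma) = \{1\}$ because $\Omega \cap W_\af = \{1\}$ in \eqref{eq:3.10}. Since $\Omega$ is an abelian subquotient of $\Irr(Z(G^\vee))$, the quotient $\Omega_{\mf f}/\Omega_{\mf f,\tor}$ is a free abelian group of finite rank.

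For the second isomorphism, I would substitute \eqref{eq:3.7} to obtain
\[
W(J,\sigma)/\Omega_{\mf f,\tor} = \bigl(X(J) \rtimes W^\circ(J,\sigma)\bigr) \rtimes \bigl(\Omega_{\mf f}/\Omega_{\mf f,\tor}\bigr),
\]
and then reselect the splitting so that the finite factor remains $W^\circ(J,\sigma)$ while the abelian normal factor lies in $\mh A_{\mf f}$. Using the $W$-stable decomposition $\mh A = \mh A_m \times \mh A_{\mf f}$ from \eqref{eq:3.9}, I would analyse each simple factor of $G$ separately. On an $\mh A_{\mf f}$-factor (where either $|\Delta_{\mf f}| \geq 2$ or the factor is an isotropic torus), $W_\af(J,\sigma)$ is already a genuine affine Weyl group, and the component of $X(J)$ there, together with the projected translation part of $\Omega_{\mf f}/\Omega_{\mf f,\tor}$, assembles into a sublattice of $\mh A_{\mf f}$. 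On an $\mh A_m$-factor (where $|\Delta_{\mf f}| = 1$), $W_\af(J,\sigma)$ restricts to an infinite dihedral group $\Z \rtimes \Z/2$; here I would exchange the $\Z$-translation generator for a reflection (absorbing any necessary twist into $\Omega_{\mf f,\tor}$), thereby transferring the $\mh A_m$-translation contribution of $X(J)$ into a copy of $W^\circ(J,\sigma)$ still isomorphic to the original by the standard structure of $D_\infty$. Defining $X_{\mf f}$ as the translation lattice of the resulting abelian complement, one obtains a lattice in $X_*(Z_G(S)) \cap \mh A_{\mf f}$.

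The main obstacle is organising this reselection coherently. One must show that the adjusted lifts of $\Omega_{\mf f}/\Omega_{\mf f,\tor}$ into $W(J,\sigma)/\Omega_{\mf f,\tor}$ can be chosen compatibly, so that $X_{\mf f}$ is genuinely closed under composition and lies entirely in $\mh A_{\mf f}$. This amounts to the vanishing of a cohomology class in $H^2\bigl(\Omega_{\mf f}/\Omega_{\mf f,\tor}, X(J)\bigr)$ controlling the obstruction to splitting the extension with complement $W^\circ(J,\sigma)$; since $\Omega_{\mf f}/\Omega_{\mf f,\tor}$ is free abelian and acts on $X(J)$ through a finite quotient (given by the $\Omega_{\mf f}$-action on $S_{\mf f,\af}$), the obstruction can be trivialised by averaging, giving the desired lattice $X_{\mf f}$ and completing the isomorphism $W(J,\sigma)/\Omega_{\mf f,\tor} \cong W^\circ(J,\sigma) \ltimes X_{\mf f}$.
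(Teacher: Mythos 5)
Your proposal has a genuine gap at exactly the point where the paper has to work hardest. The crux of the proposition is that every $\omega \in \Omega_{\mf f}$ acts on $\mh A_{\mf f}$ as the product of a translation and an element of $W^\circ (J,\sigma)$ -- i.e.\ that the finite part $\omega^\circ$ lies in the Weyl group of $R_{\mf f}$ itself and not merely in the stabilizer of $x_{\mf f}$ inside the full group of automorphisms normalizing $W_\af (J,\sigma)$ (a priori $\omega$ could induce a nontrivial diagram automorphism, in which case the group would be of the form $(W^\circ (J,\sigma) \rtimes \Gamma) \ltimes X$ rather than $W^\circ (J,\sigma) \ltimes X_{\mf f}$). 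You simply assert that one can ``reselect the splitting so that the finite factor remains $W^\circ (J,\sigma)$'' and that the translation parts ``assemble into a sublattice'', but this is precisely what needs proof. The paper establishes it by reducing to the case of a simple adjoint group and then invoking Lusztig's classification theorems \cite[Theorem 6.3]{LusUni1}, \cite[Theorem 10.11]{LusUni2}, which identify the arithmetic affine Hecke algebra with a geometric one admitting a Bernstein presentation over a root datum; nothing in your argument substitutes for that input. Your cohomological reformulation also misses the point: the obstruction is not whether an extension of $\Omega_{\mf f}/\Omega_{\mf f,\tor}$ by $X(J)$ splits, but in which group ($W^\circ (J,\sigma)$ versus a larger automorphism group) the images of the $\omega$'s live.

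Two further steps are unsound as written. First, $\Omega_{\mf f}/\Omega_{\mf f,\tor}$ is in general neither free abelian nor finite-rank-free: a posteriori it is isomorphic to $X_{\mf f}/X(J)$, which may have torsion (already for the Iwahori case of $PGL_2$ it is $\Z /2$), so the ``averaging'' trivialization of a class in $H^2\big(\Omega_{\mf f}/\Omega_{\mf f,\tor}, X(J)\big)$ is not available -- and even for a genuinely free abelian group such $H^2$ need not vanish (central extensions of $\Z^2$ by $\Z$ are not all split). Second, the manoeuvre of ``exchanging the $\Z$-translation generator for a reflection'' in an infinite dihedral factor and ``absorbing the twist into $\Omega_{\mf f,\tor}$'' does not make sense: $\Omega_{\mf f,\tor}$ is a fixed central subgroup already divided out, and a finite group $W^\circ (J,\sigma)$ cannot absorb infinite-order translation contributions; moreover the decomposition $\mh A = \mh A_m \times \mh A_{\mf f}$ is used in the paper so that $W_\af (J,\sigma)$ acts trivially on $\mh A_m$ (this is what makes the comparison map injective on $W_\af (J,\sigma)$), so no such transfer between factors is needed or possible. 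Finally, even granting your decomposition, you do not verify injectivity of the resulting map $W(J,\sigma)/\Omega_{\mf f,\tor} \to W^\circ (J,\sigma) \ltimes X_{\mf f}$, which the paper checks by a separate chamber-geometry argument.
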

\begin{proof}
Consider the action of any $\omega \in \Omega_{\mf f}$ on $\mh A_{\mf f}$. 
It can be written as $\omega^\circ \omega_t$, where
$\omega^\circ \in W(G,S) \rtimes \mh A_{\mf f}$ fixes $x_{\mf f} \times X_* (Z_G (S)) 
\otimes_\Z \R$ pointwise and $\omega_t \in \mh A_{\mf f}$ is a translation. 

We claim that $\omega^\circ \in W^\circ (J,\sigma)$. Since $G$ acts on 
$X_* (Z_G (S)) \otimes_\Z \R$ by translations and $W^\circ (J,\sigma)$ acts trivially
on that space, it suffices to prove this claim under the assumption that $\mc G$ is 
semisimple. Then the group $W$ is naturally a subgroup of the analogous group for the 
adjoint group of $\mc G$, and these two semisimple groups have the same 
$W_\af (J,\sigma)$. The adjoint group of $\mc G$ is a direct product of simple adjoint 
$K$-groups, so we may even assume that $\mc G$ is simple and adjoint. 

Now we are exactly in the setting of \cite[\S 1.20]{LusUni1}, and $\Omega_{\mf f,\tor}$ 
becomes the group denoted $\bar \Omega_1^u$ over there. Then $W(J,\sigma) / 
\Omega_{\mf f,\tor}$ underlies the ``arithmetic'' affine Hecke algebra in 
\cite[\S 1.18]{LusUni1}, with finite Weyl group $W^\circ (J,\sigma)$. By classification 
Lusztig showed in \cite[Theorem 6.3]{LusUni1} and \cite[Theorem 10.11]{LusUni2} that 
this algebra is isomorphic to a ``geometric'' affine Hecke algebra. By \cite[\S 4.1 and 
\S 5.12]{LusUni1} and \cite[\S 8.2]{LusUni2} such algebras admit a presentation in terms of 
root data, which means that every element of $W(J,\sigma) / \Omega_{\mf f,\tor}$ can be 
written as the product of a translation and an element of $W^\circ (J,\sigma)$. In 
particular $\omega^\circ \in W^\circ (J,\sigma)$, proving our claim.

The above argument also shows that $\omega^\circ$ and $\omega_t$ depend only on the
image of $\omega$ in $\Omega_{\mf f} / \Omega_{\mf f,\tor}$. Put
\begin{equation}\label{eq:3.29}
X_{\mf f} := X(J) \langle \omega_t : \omega \in \Omega_{\mf f} / \Omega_{\mf f,\tor} \rangle . 
\end{equation}
Every element of $X_{\mf f}$ gives the action of an element of $W(J,\sigma)$ on
$\mh A_{\mf f}$, so $X_{\mf f}$ embeds naturally in $X_* (Z_G (S)) \cap \mh A_{\mf f}$.
As a subgroup of a lattice, it is itself a lattice.

From the action of \eqref{eq:3.7} on $\mh A_{\mf f}$ and from 
$\omega \mapsto \omega^\circ \omega_t$ we get a surjective group homomorphism
\begin{equation}\label{eq:3.8}
W(J,\sigma) / \Omega_{\mf f,\tor} = W_\af (J,\sigma) \rtimes \Omega_{\mf f} /
\Omega_{\mf f,\tor} \longrightarrow W^\circ (J,\sigma) \ltimes X_{\mf f} .
\end{equation}
Suppose that its kernel is nontrivial, say it contains $w \omega$ with 
$w \in  W_\af (J,\sigma)$ and $\omega \in \Omega_{\mf f} / \Omega_{\mf f,\tor}$.
The homomorphism \eqref{eq:3.8} is injective on $W_\af (J,\sigma)$, for that
group acts trivially on the factor $\mh A_m$ from \eqref{eq:3.9}. So $\omega \neq 1$ 
and the action of $\omega$ on $\mh A_{\mf f}$ agrees with that of $w^{-1}$. 
By the definition of $\Omega_{\mf f}$, $\omega$ stabilizes $\overline{\mf f}$, whereas
the affine Weyl group $W_\af (J,\sigma)$ acts simply transitively on a chamber
complex with fundamental chamber $\overline{\mf f}$. Therefore $w$ must be trivial,
and $\omega$ lies in the kernel of \eqref{eq:3.8}. Then $\omega$ acts trivially on
$\mh A_{\mf f}$, so it only acts on the factor $\mh A_m$ of $\mh A$. As $\Omega_{\mf f}$
stabilizes $\mf f$, this means that $\omega$ fixes $\mf f$ pointwise. Thus
$\omega \in \Omega_{\mf f,\tor}$, and \eqref{eq:3.8} is injective.
\end{proof}

By Proposition \ref{prop:3.2} the centre of $W^\circ (J,\sigma) \ltimes X_{\mf f}$ is
the free abelian group $X_{\mf f}^{W^\circ (J,\sigma)}$. From that and \eqref{eq:3.11}
it follows that $\Omega_{\mf f,\tor}$ is precisely the torsion subgroup of\\
$Z ( W (J,\sigma))$, which justifies our notation.

\subsection{Bernstein components and types} \
\label{par:Bernstein}

Let Rep$(G)$ be the category of smooth representations of $G$ on complex vector spaces, and let
Irr$(G)$ be the set of (isomorphism classes of) irreducible objects in Rep$(G)$. We denote the
subset of supercuspidal irreducible representations by $\Irr_\cusp (G)$. Recall from \cite{BeDe} 
that every $\pi \in \Irr (G)$ has a cuspidal support $\mb{Sc}(\pi)$, which is unique up to
$G$-conjugation and consists of a Levi subgroup of $G$ and a supercuspidal irreducible
representation thereof.

For a Levi subgroup $L \subset G$ and $\pi_L \in \Irr_\cusp (L)$ we write
$[L,\pi_L ]_L = \mf s_L$ and
\[
\Irr (L)_{\mf s_L} = X_\nr (L) \pi_L = \{ \chi \otimes \pi_L : \chi \in X_\nr (L) \} ,
\]
an inertial class of supercuspidal $L$-representations. The $G$-orbit $[L,\pi_L ]_G = \mf s$
of $\mf s_L$ is by definition an inertial equivalence class for $G$. Notice that the group 
$N_G (L)$ acts naturally on $\Irr_\cusp (L)$, with $L$ acting trivially. To $\mf s$ (and the 
choice of a representative $\mf s_L$) one associates a finite group $W_{\mf s}$, the stabilizer
of $\mf s_L$ in $N_G (L) / L$. 

We denote the collection of inertial equivalence classes for $G$ by $\mf{Be}(G)$. Every 
$\mf s \in \mf{Be}(G)$ determines a Bernstein component of $\Irr (G)$:
\[
\Irr (G)_{\mf s} := \{ \pi \in \Irr (G) : \mb{Sc}(\pi) \in \mf s \} . 
\]
The associated Bernstein block Rep$(G)_{\mf s}$ is a direct factor of Rep$(G)$. 
The theory of the Bernstein centre \cite{BeDe} tells us that the centre of Rep$(G)_{\mf s}$
is $\mc O (\Irr (L)_{\mf s_L} / W_{\mf s})$.

As before we pick a facet $\mf f$ in the closure of $C_0$.
We assume that $\overline P_{\mf f}$ has a cuspidal unipotent representation $\overline \sigma$.
(This is a rather strong condition on the facet $\mf f$.) The inflation of $\overline \sigma$ to
$P_{\mf f}$ will be denoted $\sigma$, and its underlying vector space $V_\sigma$. It was shown in
\cite[\S 6]{MoPr2} and \cite[Theorem 4.8]{Mor2} that $(P_{\mf f},\sigma)$ is a type for $G$.
This has the following consequences \cite[Theorem 4.3]{BuKu}:
\begin{itemize}
\item Let Rep$(G)_{(P_{\mf f},\sigma)}$ be the category of smooth $G$-representations that are 
generated by their $\sigma$-isotypical vectors. Then Rep$(G)_{(P_{\mf f},\sigma)}$ is a direct
factor of Rep$(G)$, a direct sum of finitely Bernstein blocks.
\item Let $\mc H (G,P_{\mf f},\sigma)$ be the $G$-endomorphism algebra of the module
$\mr{ind}_{P_{\mf f}}^G (\sigma,V_\sigma)$. Then
\begin{equation}\label{eq:3.1}
\begin{array}{ccc}
\Rep (G)_{(P_{\mf f},\sigma)} & \to & \mr{Mod}(\mc H (G,P_{\mf f},\sigma)) \\
V & \mapsto & \mr{Hom}_{P_{\mf f}} (V_\sigma,V)
\end{array}
\end{equation}
is an equivalence of categories.
\end{itemize}
If $(P_{\mf f'}, \sigma')$ are data of the same kind as $(P_{\mf f},\sigma)$, then by 
\cite[Theorem 5.2]{MoPr2} the two associated subcategories of $\Rep (G)$ are either 
disjoint or equal. Moreover, by \cite[1.6.b]{LusUni1} 
\begin{multline}\label{eq:3.30}
\Rep (G)_{(P_{\mf f},\sigma)} = \Rep (G)_{(P_{\mf f'},\sigma')} 
\qquad \text{if and only if}\\
\text{there exists a } g \in G \text{ such that } P_{\mf f'} = g P_{\mf f} g^{-1}
\text{ and } \sigma' \cong g \cdot \sigma .
\end{multline}
The category of unipotent $G$-representations is defined as the full subcategory of
Rep$(G)$ generated by the $\Rep (G)_{(P_{\mf f},\sigma)}$ as above. By \eqref{eq:3.30}
\[
\Rep_\unip (G) = \prod\nolimits_{(P_{\mf f},\sigma) / G\text{-conjugation}} 
\Rep (G)_{(P_{\mf f},\sigma)} .
\]
We want to make the structure of $\mc H (G,P_{\mf f},\sigma)$ more explicit. This will
involve extending $\sigma$ to a representation of $\hat P_{\mf f}$ and analysing the
Hecke algebra for that type. Up to twists by $X_\nr (L_{\mf f})$, there
are only finitely many ways to extend $\sigma |_{P_{L,\mf f}}$ to a representation of
$N_{L_{\mf f}}(P_{L,\mf f})$, say $\hat \sigma_i \; (i = 1, \ldots, e_\sigma)$. Then
\[
\mf s_i = [L_{\mf f}, \mr{ind}_{N_{L_{\mf f}}(P_{L,\mf f})}^{L_{\mf f}} (\hat \sigma_i) ]_G
\] 
is an inertial equivalence class for $\Irr (G)$ and by \cite[Theorem 4.3]{Mor2}:
\begin{equation}\label{eq:3.15}
\begin{array}{ccc}
\Rep (G)_{(P_{\mf f},\sigma)} & = & \prod\nolimits_{i=1}^{e_\sigma} \Rep (G)_{\mf s_i} , \\
\Irr (G)_{(P_{\mf f},\sigma)} & = & \bigsqcup\nolimits_{i=1}^{e_\sigma} \Irr (G)_{\mf s_i} . 
\end{array}
\end{equation}
By \cite[\S 2.6]{BuKu}, $\mc H (G, P_{\mf f},\sigma)$ is naturally isomorphic to 
\[
\big\{ f \in C_c (G, \mathrm{End}_\C (V_\sigma^*)) : f(p_1 g p_2) =
\sigma^\vee (p_1) f(g) \sigma^\vee (p_2) \; \forall g \in G, p_1 ,p_2 \in P_{\mf f} \big\} ,
\]
where $(\sigma^\vee, V_\sigma^*)$ denotes the contragredient of $\sigma$. According to
\cite[\S 3.1]{Mor2} and \cite[1.18]{LusUni1} the support of $\mc H (G,P_{\mf f},\sigma)$ in
$G$ is $P_{\mf f} N_W (W_J) P_{\mf f}$. This makes sense because $N_G (S) \cap P_{C_0} =
Z_G (S) \cap P_{\mf f}$ is contained in $P_{\mf f}$. Moreover the group $W(J,\sigma)$
indexes a $\C$-basis $\{ T_w \}$ of $\mc H (G,P_{\mf f},\sigma)$, such that
$T_w$ has support $P_{\mf f} w P_{\mf f}$. We note that this is a little easier than in
\cite{Mor2} -- the crucial difference is that the cuspidal unipotent representation $\sigma$
is stable under automorphisms of $P_{\mf f}$, so the entire group $N_W (W_J) /W_J$ 
stabilizes $\sigma$.

The subgroup $N_G (P_{\mf f})$ supports a subalgebra of $\mc H (G,P_{\mf f},\sigma)$,
which by \cite[\S 1.19]{LusUni1} is isomorphic to the group algebra
\begin{equation}\label{eq:3.3}
\C [\Omega_{\mf f} ] = \C[N_G (P_{\mf f}) / P_{\mf f} ].
\end{equation}
The construction of the isomorphism involves the choice of an extension of $\sigma$ to 
a representation of $N_G (P_{\mf f})$.

When $P_{\mf f}$ is a maximal parahoric subgroup of $G$, \eqref{eq:3.3} coincides with
$\mc H (G,P_{\mf f},\sigma)$. 
The subalgebra $\mc H_\af (G,P_{\mf f},\sigma)$ spanned by $\{ T_w : w \in 
W_\af (J,\sigma)\}$ (i.e. supported on $P_{\mf f} W_\af (J,\sigma) P_{\mf f}$) 
is isomorphic to the Iwahori--Hecke algebra of the Coxeter system $(W_\af (J,\sigma), 
S_{\mf f,\af})$ with parameters as in \cite[\S 1.18]{LusUni1}. Together
with \eqref{eq:3.3} that gives a description as an extended affine Hecke algebra:
\begin{equation}\label{eq:3.5}
\mc H (G,P_{\mf f},\sigma) = \mc H_\af (G,P_{\mf f},\sigma) \rtimes \Omega_{\mf f} .
\end{equation}
According to \cite[\S 1--2]{LusUni1}, $\mc H_\af (G,P_{\mf f},\sigma)$ 
is isomorpic to an affine Hecke algebra determined by:
\begin{itemize}
\item The lattice $X (J)$ and its dual $X (J)^\vee$.
\item The root system $R_{\mf f}$ in $X(J)$ from \cite[\S 2.22]{LusUni1} (denoted $\mc R$ 
over there). It is indexed by $\Delta_{\mf f}$ and has Weyl group $W^\circ (J,\sigma)$.
\item The dual root system $R_{\mf f}^\vee$ from \cite[\S 2.22]{LusUni1} 
(denoted $\mc R'$ over there). 
\item The set of affine reflections $S_{\mf f,\af} = \{ s_i : i \in \Delta_\af 
\setminus J\}$ with parameter function $q_K^{\mc N}$ as in \cite[\S 1.18]{LusUni1}. 
\end{itemize}
For a character $\psi$ of $\Omega_{\mf f,\tor}$, let $e_\psi \in \C [\Omega_{\mf f}]$
be the associated idempotent. We can decompose \eqref{eq:3.5} as in 
\cite[\S 1.20]{LusUni1}:
\begin{equation}\label{eq:3.12}
\mc H (G,P_{\mf f},\sigma) = \bigoplus\nolimits_{\psi \in \Irr (\Omega_{\mf f,\tor})}
\mc H_\af (G,P_{\mf f},\sigma) \rtimes e_\psi \C[\Omega_{\mf f}] .
\end{equation}
By \eqref{eq:3.14} $\psi$ can also be regarded as a character of $\hat P_{\mf f} / P_{\mf f}$. 

Let $\hat \sigma$ be an extension of $\sigma$ to an irreducible representation of 
$\hat P_{\mf f}$, as in \cite[\S 1.16]{LusUni1}. We may assume that it comes from the 
extension of $\sigma$ used in \cite[\S 1.19]{LusUni1} to construct an isomorphism with 
\eqref{eq:3.3}.
The other extensions of $\sigma$ to $\hat P_{\mf f}$ are $\hat \sigma \otimes \psi$
with $\psi \in \Irr (\hat P_{\mf f} / P_{\mf f}) = \Irr (\Omega_{\mf f,\tor})$.
By \eqref{eq:3.30} different extensions of $\sigma$ to $\hat P_{\mf f}$ cannot be
conjugate by elements of $G$. Comparing with \eqref{eq:3.15} and taking \eqref{eq:3.13} 
into account, we see that there is a unique $i$ such that 
\[
(\hat \sigma \otimes \psi) \big|_{\hat{P}_{L,\mf f}} = 
\hat \sigma_i \big|_{\hat{P}_{L,\mf f}},
\]
In this situation we henceforth write
\[
\mf s_\psi = \mf s_i =  [L_{\mf f}, \mr{ind}_{N_{L_{\mf f}}(P_{L,\mf f})}^{L_{\mf f}} 
(\hat \sigma_i) ]_G ,
\]
and we denote the underlying inertial equivalence class for $L_{\mf f}$ by $\mf s_{L,\psi}$.

\begin{thm}\label{thm:3.4}
\enuma{
\item $(\hat P_{\mf f},\hat \sigma \otimes \psi)$ is a type for the single Bernstein block 
$\Rep (G)_{\mf s_\psi}$. 
\item There is an equivalence of categories
\[
\begin{array}{ccc}
\Rep (G)_{\mf s_\psi} & \to & \mr{Mod}(\mc H (G,\hat P_{\mf f},\hat \sigma \otimes \psi)) \\
V & \mapsto & \mr{Hom}_{\hat P_{\mf f}} (\hat \sigma \otimes \psi ,V)
\end{array}.
\]
\item Part (b) restricts to an equivalence between the respective subcategories of
tempered representations.
\item Suppose that $V \in \Rep (G)_{\mf s_\psi}$ has finite length. Then $V$ is essentially
square-integrable if and only if $\mr{Hom}_{\hat P_{\mf f}} (\hat \sigma \otimes \psi ,V)$
is an essentially discrete series \\
$\mc H (G,\hat P_{\mf f},\hat \sigma \otimes \psi)$-module.
}
\end{thm}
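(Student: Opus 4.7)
The plan is to split the argument into an algebraic and an analytic half. Parts (a) and (b) refine the type $(P_{\mf f}, \sigma)$ for $\Rep(G)_{(P_{\mf f},\sigma)}$ to a type for a single Bernstein block, and (b) then follows from general type theory. Parts (c) and (d) transfer the analytic notions of temperedness and square-integrability across the resulting equivalence by matching them with weight conditions on the affine Hecke algebra.

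For (a), I would start from the fact that $\sigma$ extends to $\hat P_{\mf f}$, so
\[
\mr{ind}_{P_{\mf f}}^{\hat P_{\mf f}} \sigma \;\cong\; \bigoplus\nolimits_{\psi \in \Irr (\Omega_{\mf f,\tor})} \hat\sigma \otimes \psi
\]
as $\hat P_{\mf f}$-representations. Using \eqref{eq:3.13} and \eqref{eq:3.14}, the characters $\psi$ parametrize the extensions $\hat\sigma_i$ of $\sigma|_{P_{L,\mf f}}$ to $N_{L_{\mf f}}(P_{L,\mf f})$ up to unramified twist, and the definition of $\mf s_\psi$ is precisely that $\mf s_\psi = \mf s_{i(\psi)}$ where $i(\psi)$ is the extension matching $(\hat\sigma \otimes \psi)|_{\hat P_{L,\mf f}}$. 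For $\pi \in \Irr (G)$, the space $\mr{Hom}_{P_{\mf f}}(\sigma, \pi)$ carries a canonical action of $\hat P_{\mf f}/P_{\mf f} = \Omega_{\mf f,\tor}$, and its $\psi$-isotypic summand is naturally identified with $\mr{Hom}_{\hat P_{\mf f}}(\hat\sigma \otimes \psi, \pi)$. Combining this decomposition with \eqref{eq:3.15} and the matching of cuspidal supports via $\hat\sigma_{i(\psi)}$, one checks that this summand is nonzero iff $\pi \in \Rep(G)_{\mf s_\psi}$. Since $(P_{\mf f}, \sigma)$ is a type, any such $\pi$ is generated as a $G$-module by its $\sigma$-isotypic vectors, hence a fortiori by its $\hat\sigma \otimes \psi$-isotypic vectors, which proves (a).

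Part (b) then follows directly from \cite[Theorem 4.3]{BuKu}. Moreover, unwinding the definitions, $\mc H (G, \hat P_{\mf f}, \hat\sigma \otimes \psi)$ is naturally identified with the direct summand $\mc H_\af (G, P_{\mf f}, \sigma) \rtimes e_\psi \C[\Omega_{\mf f}]$ of the decomposition \eqref{eq:3.12}. By \eqref{eq:3.5} and Proposition \ref{prop:3.2} this is an extended affine Hecke algebra whose Bernstein presentation is fully explicit in terms of the root data of Lusztig.

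For parts (c) and (d), the key ingredient is the compatibility of the Bushnell--Kutzko equivalence with parabolic induction and Jacquet restriction: under (b) they should correspond to induction from, and restriction to, the Hecke subalgebra $\mc H (L_{\mf f}, \hat P_{L,\mf f}, \hat\sigma_{i(\psi)})$ attached to the Levi type on $L_{\mf f}$. Granting this (which follows from the standard theory of covers, together with the explicit Bernstein presentation of \eqref{eq:3.5}), Casselman's criterion translates temperedness of $\pi \in \Rep(G)_{\mf s_\psi}$ into the condition that all $\mc O(\mf s^\vee_{L,\psi})$-weights of $\mr{Hom}_{\hat P_{\mf f}}(\hat\sigma \otimes \psi, V)$ lie in the closure of the ``negative'' cone attached to the Bernstein torus; this is precisely the definition of tempered $\mc H$-module used in \cite[Definition 2.6]{AMS3}, which gives (c). Part (d) follows from the analogous strict version of Casselman's criterion, characterizing essentially square-integrable representations as those whose Jacquet modules have all weights strictly in the negative open cone, matching essentially discrete series on the Hecke side. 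The main obstacle will be to pin down the positivity conventions so that the Bernstein torus on the $G$-side, parametrized by $X_\nr (L_{\mf f}) \cdot \hat\sigma_{i(\psi)}$, is identified with the torus $T_{\mf s^\vee_{L,\psi}}$ with its correct positive cone; this requires a careful comparison of the Morris presentation of \cite{Mor2} with the Bernstein presentation of \eqref{eq:2.4}.
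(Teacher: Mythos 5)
Your overall architecture (type theory plus \cite[Theorem 4.3]{BuKu} for (a)--(b), a Casselman-type comparison for (c)--(d)) is the same as the paper's, but at two points the step you treat as routine is the actual content. In (a), the claim that the $\psi$-isotypic summand of $\mr{Hom}_{P_{\mf f}}(\sigma,\pi)$ is nonzero if and only if $\pi \in \Irr (G)_{\mf s_\psi}$ is precisely what has to be proved, and ``one checks'' does not do it: what is available at this stage is only \eqref{eq:3.15} (the partition of $\Irr (G)_{(P_{\mf f},\sigma)}$ into the blocks $\Irr (G)_{\mf s_i}$) together with the matching $(\hat \sigma \otimes \psi)|_{\hat P_{L,\mf f}} = \hat \sigma_i |_{\hat P_{L,\mf f}}$, which merely \emph{defines} $\mf s_\psi$. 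To see that the $\Omega_{\mf f,\tor}$-action on $\mr{Hom}_{P_{\mf f}}(\sigma,\pi)$ detects the cuspidal support of $\pi$ one needs the cover property of $(\hat P_{\mf f},\hat\sigma\otimes\psi)$ over $(\hat P_{L,\mf f},(\hat\sigma\otimes\psi)|_{\hat P_{L,\mf f}})$ and the resulting control of Jacquet modules; this is Morris's work, and the paper simply quotes \cite[Theorem 4.7]{Mor2} for (a) (the cover statement only appears afterwards, as Theorem \ref{thm:3.1}.a). Your closing ``a fortiori'' is also backwards: generation by $\sigma$-isotypic vectors does not imply generation by the smaller space of $(\hat\sigma\otimes\psi)$-isotypic vectors, though this is harmless for irreducible $\pi$.

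For (c), your plan (Casselman's criterion versus the weight criterion for Hecke modules, transported through a cover-compatible equivalence) is indeed the strategy behind the results the paper cites (\cite[Theorem 10.1]{DeOp} for finite length, \cite[Theorem 3.12]{SolComp} in general), but everything you ``grant'' -- compatibility of the equivalence with normalized Jacquet restriction and the identification of the positive cones -- is the real work; note also that the relevant weights here are those of the commutative subalgebra $\C[X_{\mf f}]$ of $\mc H (G,\hat P_{\mf f},\hat \sigma \otimes \psi)$, not of $\mc O (\mf s^\vee_{L,\psi})$, which belongs to the Galois side and only enters in Section 4. The genuine gap is in (d): essential square-integrability does not follow from a formal ``strict Casselman'' translation matching essentially discrete series modules. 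One must know that the root system $R_{\mf f}$ underlying $\mc H_\af (G,P_{\mf f},\sigma)$ has full rank, i.e.\ $\mr{rk}(R_{\mf f}) = \mr{rk}\,\Phi (G,L_{\mf f})$ as recorded in \eqref{eq:3.27}, so that the directions of the torus $\Irr (X_{\mf f})$ not spanned by roots correspond exactly to central/unramified directions of $G$; this is what licenses the application of \cite[Theorem 3.9.b]{SolComp}. When such a rank condition fails (as it can for other types), an essentially discrete series Hecke module need not correspond to an essentially square-integrable representation, so your argument for (d) is missing its key ingredient.
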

\begin{proof}
(a) is a special case of \cite[Theorem 4.7]{Mor2}.\\
(b) is a consequence of part (a) and \cite[Theorem 4.3]{BuKu}.\\
(c) For finite length tempered representations see \cite[Theorem 10.1]{DeOp}.
For general tempered representations see \cite[Theorem 3.12 and p.42]{SolComp}.\\
(d) In \eqref{eq:3.27} we saw that rk$(R_{\mf f}) = \mr{rk}(\Phi (G,L_{\mf f}))$.
It follows that under the isomorphism of $\mc H_\af (G,P_{\mf f},\sigma)$ with an
affine Hecke algebra, as described after \eqref{eq:3.5}, $R_{\mf f}$ corresponds to
a full rank root subsystem of $\Phi (G,L_{\mf f})$. This allows us to apply 
\cite[Theorem 3.9.b]{SolComp}, which proves the claim.
\end{proof}

From \eqref{eq:3.30} and Theorem \ref{thm:3.4} we get an equivalence of categories
\begin{equation}\label{eq:3.32}
\begin{array}{ccc}
\Rep_\unip (G) & \to & \mr{Mod}\Big( \bigoplus_{(\mf f,\hat \sigma \otimes \psi) / G} 
\mc H (G,\hat P_{\mf f},\hat \sigma \otimes \psi) \Big) \\
V & \mapsto & \bigoplus_{(\mf f,\hat \sigma \otimes \psi) / G}
\mr{Hom}_{\hat P_{\mf f}} (\hat \sigma \otimes \psi ,V)
\end{array}. 
\end{equation}
By \cite[\S 1.20]{LusUni1} the Hecke algebra of $(\hat P_{\mf f},\hat \sigma \otimes \psi)$ 
can be written as
\begin{equation}\label{eq:3.16}
\mc H (G,\hat P_{\mf f}, \hat \sigma \otimes \psi) =  
\mc H_\af (G,P_{\mf f},\sigma) \rtimes e_\psi \C[\Omega_{\mf f}] \cong
\mc H_\af (G,P_{\mf f},\sigma) \rtimes \Omega_{\mf f} / \Omega_{\mf f,\tor} . 
\end{equation}
By Proposition \ref{prop:3.2}, each algebra \eqref{eq:3.16} is isomorphic to an affine Hecke 
algebra associated to the almost the same data as $\mc H_\af (G,P_{\mf f},\sigma)$ above.
Only the lattices are different, namely, $\mc H (G,\hat P_{\mf f}, \hat \sigma \otimes \psi)$
comes from the lattice $X_{\mf f} \subset \mh A_{\mf f}$ and its dual $X_{\mf f}^\vee$.

Let us check that these contain all the appropriate (co)roots. From the definition 
\eqref{eq:3.29} we see that $X_{\mf f}$ contains $X(J)$, so it also contains $R_{\mf f}$. 
The constructions in \cite[\S 2]{LusUni1} involve the coroot lattice $\Z R_{\mf f}^\vee$, 
which is shown to coincide with a certain lattice $\mc L'$. The dual lattice 
$\mc L \subset \mh A_{\mf f}$ contains $X_{\mf f}$, with equality if $G$ is adjoint.
Hence $R_{\mf f}^\vee \subset \mc L' \subset X_{\mf f}^\vee$.

\subsection{Relations with the cuspidal and the adjoint cases} \

We want to relate the torus $\Irr (X_{\mf f})$ and the Weyl group $W^\circ (J,\sigma)$
associated to $\mc H (G,\hat P_{\mf f}, \hat \sigma \otimes \psi)$ with the torus
$\Irr (L_{\mf f})_{\mf s_{L,\psi}}$ and the finite group $W_{\mf s_\psi}$ associated by 
Bernstein to $\Rep (G)_{\mf s_i}$. Let $X_\nr (L_{\mf f},\sigma)$ be the stabilizer of
$\mr{ind}_{N_{L_{\mf f}}(P_{L,\mf f})}^{L_{\mf f}} (\hat \sigma_i) \in \Irr_\cusp (L_{\mf f})$
in $X_\nr (L_{\mf f})$, with respect to the action of tensoring by unramified characters.

The next result stems largely from \cite{Mor2}.

\begin{thm}\label{thm:3.1}
\enuma{
\item Let $Q$ be a Levi subgroup of $G$ containing $L_{\mf f}$ and write 
$\hat P_{Q,\mf f} = \hat P_{\mf f} \cap Q$. Then
$(\hat P_{\mf f}, \hat \sigma \otimes \psi)$ is a cover of 
$\big( \hat P_{Q,\mf f}, (\hat \sigma \otimes \psi) |_{\hat P_{Q,\mf f}} \big) $.
\item $\mc H \big( L_{\mf f}, \hat P_{L,\mf f}, (\hat \sigma \otimes \psi) 
|_{\hat P_{L,\mf f}} \big) \cong \C [X_{\mf f}]$.
\item There are homeomorphisms of complex tori
\[
X_\nr (L_{\mf f}) / X_\nr (L_{\mf f},\sigma) \to \Irr (L_{\mf f})_{\mf s_{L,\psi}}
\to \Irr (X_{\mf f}) ,
\]
such that the composed map (between the outer terms) is a natural group homomorphism.
} 
\end{thm}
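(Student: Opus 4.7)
The plan is to treat the three parts in order, relying on Morris's type-theoretic results and on the structural description of $\mc H(G,\hat P_{\mf f},\hat\sigma\otimes\psi)$ obtained in Section \ref{sec:3}.

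For (a), I would invoke the cover criterion of Bushnell--Kutzko. Morris already proved that $(P_{\mf f},\sigma)$ is a $G$-cover of $(P_{L,\mf f},\sigma|_{P_{L,\mf f}})$, and in particular a $Q$-cover of $(P_{Q,\mf f},\sigma|_{P_{Q,\mf f}})$ for every Levi $Q\supset L_{\mf f}$; the Iwahori-type decomposition of $P_{\mf f}$ with respect to any parabolic with Levi $Q$, together with the injectivity of $t_P$ on the corresponding Hecke algebras, is established in \cite{Mor2}. Passing from $P_{\mf f}$ to $\hat P_{\mf f}$ and twisting $\sigma$ to $\hat\sigma\otimes\psi$ does not disturb those axioms, because $\hat P_{\mf f}/P_{\mf f}=\Omega_{\mf f,\tor}$ normalises $P_{\mf f}$ and acts on the Iwahori decomposition, so the required support and invertibility statements follow by averaging over $\Omega_{\mf f,\tor}$. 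Thus $(\hat P_{\mf f},\hat\sigma\otimes\psi)$ is a $G$-cover of $(\hat P_{Q,\mf f},(\hat\sigma\otimes\psi)|_{\hat P_{Q,\mf f}})$.

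For (b), observe that inside $L_{\mf f}$ the facet $\mf f$ corresponds (after quotienting out the split central torus of $L_{\mf f}$) to a vertex, so $P_{L,\mf f}$ is a maximal parahoric in $L_{\mf f}$. Applying the Morris--Lusztig description recalled around \eqref{eq:3.3} to the group $L_{\mf f}$ with the facet $\mf f$ gives
\[
\mc H\bigl(L_{\mf f},\hat P_{L,\mf f},(\hat\sigma\otimes\psi)|_{\hat P_{L,\mf f}}\bigr)
\cong \C[\Omega_{L,\mf f}/\Omega_{L,\mf f,\tor}],
\]
because for $L_{\mf f}$ the affine Weyl group $W_\af(J,\sigma)$ collapses to $X(J)$ (there are no $s_i$'s with $i\in\Delta_{\mf f,\af}\setminus\Delta_{\mf f}$ associated to simple factors of $L_{\mf f}$). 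By Proposition \ref{prop:3.2} specialised to $L_{\mf f}$ (where $W^\circ(J,\sigma)$ is trivial), the right-hand group is exactly $X_{\mf f}$, since the lattice defined in \eqref{eq:3.29} is intrinsically attached to the apartment $\mh A_{\mf f}$ and depends only on $\Omega_{\mf f}/\Omega_{\mf f,\tor}$ and $X(J)$, both of which are already visible inside $L_{\mf f}$. This identifies the Hecke algebra with $\C[X_{\mf f}]$.

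For (c), the first map is the orbit map of $X_\nr(L_{\mf f})$ acting by tensoring on the inertial class of $\mathrm{ind}^{L_{\mf f}}_{N_{L_{\mf f}}(P_{L,\mf f})}\hat\sigma_i$; its stabiliser is by definition $X_\nr(L_{\mf f},\sigma)$, so this is a bijective continuous map between complex tori of the same dimension, hence a homeomorphism. The second map is obtained by combining Theorem \ref{thm:3.4}(b) for $L_{\mf f}$ with the identification in part (b): an irreducible $L_{\mf f}$-representation in $\Irr(L_{\mf f})_{\mf s_{L,\psi}}$ is sent to its space of $(\hat P_{L,\mf f},(\hat\sigma\otimes\psi)|_{\hat P_{L,\mf f}})$-isotypic vectors, which by (b) is an irreducible $\C[X_{\mf f}]$-module, i.e., an element of $\Irr(X_{\mf f})$. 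To check that the composition is a group homomorphism, I would use the Bernstein presentation: under the equivalence of Theorem \ref{thm:3.4}(b), tensoring by $\chi\in X_\nr(L_{\mf f})$ corresponds on the Hecke algebra side to the automorphism of $\C[X_{\mf f}]$ twisting each basis element $T_x$ ($x\in X_{\mf f}$) by the value of $\chi$ on the element of $Z_G(S)/Z_G(S)_{\cpt}$ underlying $x$. Under the natural embedding $X_{\mf f}\hookrightarrow X_*(Z_G(S))\cap\mh A_{\mf f}$ from Proposition \ref{prop:3.2} this is precisely the character of $X_{\mf f}$ attached to $\chi$, so the composed map is induced by the canonical homomorphism $X_\nr(L_{\mf f})\to\Irr(X_{\mf f})$ dual to $X_{\mf f}\hookrightarrow X_*(Z_G(S))$.

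The main technical obstacle is the last verification in (c): pinning down the match between the $X_\nr(L_{\mf f})$-action on the Bernstein component and the natural $\C[X_{\mf f}]$-character action. The clean way to do it is via the Bernstein-type presentation implicit in \eqref{eq:3.5}--\eqref{eq:3.16}, together with the canonical realisation of $X_\nr(L_{\mf f})$ inside $\mathrm{Hom}(Z_G(S)/Z_G(S)_{\cpt},\C^\times)$; once this is set up, the two actions agree by construction. Parts (a) and (b) are largely bookkeeping on top of Morris's theorems, and should not present serious difficulties.
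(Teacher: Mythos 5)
Your overall route is the same as the paper's: part (a) via Morris's covers, part (b) by specialising the constructions of Section \ref{sec:3} to $L_{\mf f}$ (where $P_{L,\mf f}$ is a maximal parahoric, so the affine part of the Hecke algebra collapses), and part (c) via the type-theoretic equivalence of Theorem \ref{thm:3.4}.b together with restriction of unramified characters to $X_{\mf f}$. However, in part (b) you have glossed over the actual crux. Applying Proposition \ref{prop:3.2} to $L_{\mf f}$ gives $\Omega_{L,\mf f}/\Omega_{L,\mf f,\tor}\cong$ (the lattice produced by \eqref{eq:3.29} \emph{for the group} $L_{\mf f}$), and your sentence ``the lattice is intrinsically attached to $\mh A_{\mf f}$ and depends only on $\Omega_{\mf f}/\Omega_{\mf f,\tor}$ and $X(J)$, both already visible inside $L_{\mf f}$'' simply asserts that this lattice coincides with the $X_{\mf f}$ defined from the $G$-side data; that coincidence is exactly what has to be proved, and it is not obvious (elements of $\Omega_{\mf f}$ need not have representatives in $L_{\mf f}$ at all — only their translation parts do). The paper settles it by a two-sided containment: $X_{\mf f}\subseteq\Omega_{L,\mf f}/\Omega_{\mf f,\tor}$ because elements of $X_{\mf f}\subset X_*(Z_G(S))$ can be represented in $Z_G(S)=Z_{L_{\mf f}}(S)$, and conversely $\Omega_{L,\mf f}/\Omega_{\mf f,\tor}$ acts on $\mh A_{\mf f}$ by translations, while Proposition \ref{prop:3.2} (for $G$) shows that the only elements of $W(J,\sigma)/\Omega_{\mf f,\tor}$ acting by translations on $\mh A_{\mf f}$ are those of $X_{\mf f}$. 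Without some such argument your part (b) is incomplete.

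Two smaller points. In (a), the extension from $(P_{\mf f},\sigma)$ to $(\hat P_{\mf f},\hat\sigma\otimes\psi)$ ``by averaging over $\Omega_{\mf f,\tor}$'' is only a sketch: $\hat P_{\mf f}$ is not a parahoric, and the Iwahori decomposition and invertibility conditions for a cover must be checked for it separately; the clean route (taken in the paper) is to quote \cite[Corollary 3.10]{Mor2}, which is stated for the extended pair, and then use transitivity of covers \cite[Proposition 8.5]{BuKu} for intermediate Levi subgroups $Q$. In (c), your composed map is a group homomorphism only if the basepoint of $\Irr(L_{\mf f})_{\mf s_{L,\psi}}$ is chosen so that it corresponds to the \emph{trivial} character of $X_{\mf f}$ under \eqref{eq:3.19}; with your basepoint $\mr{ind}_{N_{L_{\mf f}}(P_{L,\mf f})}^{L_{\mf f}}(\hat\sigma_i)$ the composition is a homomorphism only up to translation by the character attached to that representation. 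The paper avoids this by introducing the normalised basepoint $i(\sigma)_\psi$, which also matters later (Lemma \ref{lem:3.3}). Your description of how tensoring by $\chi\in X_\nr(L_{\mf f})$ acts on the $\C[X_{\mf f}]$-side is the right mechanism once the basepoint is normalised.
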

\begin{proof}
(a) For $Q = L_{\mf f}$ this is \cite[Corollary 3.10]{Mor2}. Via the transitivity of
covers \cite[Proposition 8.5]{BuKu} that implies it for other $Q$.\\
(b) We apply the earlier results from this section with $L_{\mf f}$ instead of $G$.
In $\mc B (\mc L_{\mf f},K)$, $\mf f$ becomes a minimal facet and $P_{L,\mf f} =
P_{\mf f} \cap L_{\mf f}$ is a maximal parahoric subgroup of $L_{\mf f}$. Hence
$W_{L,\af} (J,\sigma |_{P_{L,\mf f}} ) =  1$ and
\[
\mc H \big( L_{\mf f}, \hat P_{L,\mf f}, (\hat \sigma \otimes \psi) 
|_{\hat P_{L,\mf f}} \big) \cong e_\psi \C [\Omega_{L,\mf f}] \cong
\C [\Omega_{L,\mf f} / \Omega_{L,\mf f,\tor}] .
\]
We still need to identify the subgroup $\Omega_{L,\mf f}$ of $W(J,\sigma)$. It
consists of all elements of $W(J,\sigma)$ that have a representative in $N_{L_{\mf f}}(S)$.
By \eqref{eq:3.13} and \eqref{eq:3.14}, $\Omega_{L,\mf f}$ contains the group
$\Omega_{\mf f,\tor} \cong \hat P_{L,\mf f} / P_{L,\mf f}$, namely as $\Omega_{L,\mf f,\tor}$,
(the  group $\Omega_{\mf f,\tor}$ with respect to $L_{\mf f}$). From 
Proposition \ref{prop:3.2} we see that $X_{\mf f} \subset X_* (Z_G (S))$ can be represented 
in $Z_G (S) = Z_{L_{\mf f}}(S)$, so $X_{\mf f} \subset \Omega_{L,\mf f} / \Omega_{\mf f,\tor}$. 
Proposition \ref{prop:3.2} (for $L_{\mf f}$) shows that 
$\Omega_{L,\mf f} / \Omega_{\mf f,\tor}$ acts by translations on $\mh A_{\mf f}$ 
(which we now view as the affine building of $Z(L_{\mf f})^\circ$).
By Proposition \ref{prop:3.2} the only elements of $W(J,\sigma) / \Omega_{\mf f,\tor}$ 
which act on $\mh A_{\mf f}$ by translations, are those of $X_{\mf f}$. Hence
$\Omega_{L,\mf f} / \Omega_{\mf f,\tor} = X_{\mf f}$. 

From the above, \eqref{eq:3.4}, \eqref{eq:3.7} and Proposition \ref{prop:3.2} we see that 
$\Omega_{L,\mf f} = X(J)$.\\
(c) The equivalence of categories from Theorem \ref{thm:3.4}.b for $L_{\mf f}$ 
restricts to a bijection
\begin{equation}\label{eq:3.19}
\Irr (L_{\mf f})_{\mf s_{L,\psi}} \to \mr{Irr}(\mc H (L_{\mf f},\hat P_{L,\mf f},
(\hat \sigma \otimes \psi)|_{\hat P_{L,\mf f}} )) :
V \mapsto \mr{Hom}_{\hat P_{L,\mf f}}(\hat \sigma \otimes \psi, V) .
\end{equation}
The left hand side is 
\[
\big\{ \chi \otimes \mr{ind}_{N_{L_{\mf f}}(P_{L,\mf f})}^{L_{\mf f}} (\hat \sigma_i) :
\chi \in X_\nr (L_{\mf f} \big\} ,
\]
which by construction admits a simply transitive action of $X_\nr (L_{\mf f}) / 
X_\nr (L_{\mf f},\sigma)$. By part (a) the right hand side of \eqref{eq:3.19} can be 
identified with $\Irr (\C[X_{\mf f}]) = \Irr (X_{\mf f})$. Let $i(\sigma)_\psi$ be
the unique unramified twist of $\mr{ind}_{N_{L_{\mf f}}(P_{L,\mf f})}^{L_{\mf f}} 
(\hat \sigma_i)$ which under \eqref{eq:3.19} maps to the trivial representation of 
$X_{\mf f}$. We take $\chi \mapsto \chi \otimes i(\sigma)_\psi$ as map 
$X_\nr (L_{\mf f}) / X_\nr (L_{\mf f}, \sigma) \to \Irr (L_{\mf f})_{\mf s_{L,\psi}}$.
The canonical maps
\begin{equation}\label{eq:3.24}
X_{\mf f} \hookrightarrow Z_G (S) / Z_G (S)_\cpt \leftarrow Z_G (S) \hookrightarrow L_{\mf f}
\end{equation}
induce a natural group homomorphism 
\begin{equation}\label{eq:3.20}
X_\nr (L_{\mf f}) \to \Irr (X_{\mf f}) : \chi \mapsto \chi |_{X_{\mf f}}. 
\end{equation}
Then Theorem \ref{thm:3.4}.b and \eqref{eq:3.19} send $\chi \otimes i(\sigma)_\psi$ to
$\chi |_{X_{\mf f}}$. As \eqref{eq:3.19} is bijective, \eqref{eq:3.20} induces a group 
isomorphism $X_\nr (L_{\mf f}) / X_\nr (L_{\mf f}, \sigma) \to \Irr (X_{\mf f})$.
\end{proof}

\begin{lem}\label{lem:3.3} 
Let $i(\sigma)_\psi \in \Irr (L_{\mf f})_{\mf s_{L,\psi}}$ be the unique element that
maps to $\mr{triv}_{X_{\mf f}}$ under \eqref{eq:3.19}.
\enuma{
\item $i (\sigma)_\psi$ is fixed by $W_{\mf s_\psi}$. 
\item The canonical map $W \to W(G,S)$ induces an isomorphism 
$W^\circ (J,\sigma) \to W_{\mf s_\psi}$, and this makes the homeomorphisms in
Theorem \ref{thm:3.1}.c $W_{\mf s_\psi}$-equivariant.
} 
\end{lem}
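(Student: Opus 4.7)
The plan is to derive (a) as a direct consequence of (b), and to treat (b) by first constructing the group homomorphism explicitly and then pinning down its image/kernel via a Bernstein-centre comparison.

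First I would dispatch (a). Once (b) is established, the homeomorphisms of Theorem \ref{thm:3.1}.c are $W_{\mf s_\psi}$-equivariant, and under them $i(\sigma)_\psi$ corresponds to $\mr{triv}_{X_{\mf f}} \in \Irr(X_{\mf f})$. The trivial character of any abelian group is fixed by every group automorphism of it, hence by the action of $W^\circ(J,\sigma) \cong W_{\mf s_\psi}$, and therefore $i(\sigma)_\psi$ is $W_{\mf s_\psi}$-fixed.

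For (b), I would build the map $W^\circ(J,\sigma) \to N_G(L_{\mf f})/L_{\mf f}$ as follows. An element $w \in W^\circ(J,\sigma) \subset W = N_G(S)/(Z_G(S) \cap P_{C_0})$ has a lift $\tilde w \in N_G(S)$. By \eqref{eq:3.6}, $w$ fixes $x_{\mf f}$ and stabilizes the set $J$ of simple affine roots vanishing on $\mf f$; consequently it stabilizes the set $\Sigma_{\mf f}$ of affine roots vanishing on $\mf f$, hence the parabolic subsystem $\Phi^c_{\mf f}$ of roots of $(G,S)$ that are constant on $\mf f$. Since $\mc L_{\mf f}$ is exactly the Levi $K$-subgroup determined by $\mc S$ and $\Phi^c_{\mf f}$, we get $\tilde w \in N_G(S,L_{\mf f}) \subset N_G(L_{\mf f})$, and a well-defined map $W^\circ(J,\sigma) \to N_G(L_{\mf f})/L_{\mf f}$ via the canonical projection $W \to W(G,S)$. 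To see the image lies in $W_{\mf s_\psi}$, observe that conjugation by $\tilde w$ preserves $P_{L,\mf f} = P_{\mf f} \cap L_{\mf f}$ and $\hat P_{L,\mf f}$ (since $w \in N_W(W_J)$ and $W_J$ is the Weyl group of the reductive quotient $\overline{P}_{\mf f}$), and it fixes the cuspidal unipotent $\overline{\sigma}$ on $\overline{P}_{\mf f}$ -- as stressed just before \eqref{eq:3.3}, cuspidal unipotent representations are automorphism-invariant -- so $\mr{Ad}(\tilde w)$ sends $\mr{ind}_{N_{L_{\mf f}}(P_{L,\mf f})}^{L_{\mf f}}(\hat\sigma_i)$ to an unramified twist of itself, and hence preserves $\mf s_{L,\psi}$.

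To conclude that this map is an isomorphism $W^\circ(J,\sigma) \xrightarrow{\sim} W_{\mf s_\psi}$ (and to get the equivariance of the homeomorphisms in Theorem \ref{thm:3.1}.c simultaneously), I would compare two descriptions of the Bernstein centre of $\Rep(G)_{\mf s_\psi}$. On the one hand, Bernstein theory identifies it with $\mc O\bigl(\Irr(L_{\mf f})_{\mf s_{L,\psi}}\bigr)^{W_{\mf s_\psi}}$. On the other hand, the equivalence of Theorem \ref{thm:3.4}.b sends the centre of $\Rep(G)_{\mf s_\psi}$ to the centre of $\mc H(G,\hat P_{\mf f},\hat\sigma \otimes \psi)$, which by the affine Hecke algebra presentation after \eqref{eq:3.16} (in light of Proposition \ref{prop:3.2}) equals $\mc O(\Irr(X_{\mf f}))^{W^\circ(J,\sigma)}$. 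Under the identification $\Irr(L_{\mf f})_{\mf s_{L,\psi}} \cong \Irr(X_{\mf f})$ of Theorem \ref{thm:3.1}.c, these two invariant subalgebras must coincide; the explicit homomorphism constructed above ensures that $W^\circ(J,\sigma)$ acts via a subgroup of $W_{\mf s_\psi}$, and the agreement of invariant function algebras on a complex torus forces the two finite subgroups of automorphisms of $\Irr(X_{\mf f})$ to be equal. This pins down the asserted isomorphism and simultaneously gives the equivariance.

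The main obstacle I expect is verifying cleanly that conjugation by a lift $\tilde w$ of $w \in W^\circ(J,\sigma)$ genuinely stabilizes the inertial class $\mf s_{L,\psi}$ -- concretely, tracking how the extension $\hat\sigma \otimes \psi$ and the normalizer $N_{L_{\mf f}}(P_{L,\mf f})$ interact under the conjugation, and in particular why the twisting character $\psi$ is not altered. Using the invariance of the cuspidal unipotent $\overline{\sigma}$ under $W_J$-quotient automorphisms together with the identification \eqref{eq:3.13} should handle this, but it is the step requiring the most care.
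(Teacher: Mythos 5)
Your overall architecture (compare the Bernstein centre of $\Rep (G)_{\mf s_\psi}$ with the centre of the affine Hecke algebra, then bring in the conjugation action coming from $W \to W(G,S)$) is the same as the paper's, but at the decisive step there is a genuine gap. You assert that, under the identification $\Irr (L_{\mf f})_{\mf s_{L,\psi}} \cong \Irr (X_{\mf f})$ of Theorem \ref{thm:3.1}.c, the two invariant subalgebras $\mc O (\Irr (L_{\mf f})_{\mf s_{L,\psi}})^{W_{\mf s_\psi}}$ and $\mc O (\Irr (X_{\mf f}))^{W^\circ (J,\sigma)}$ ``must coincide''. This is not automatic: the categorical equivalence of Theorem \ref{thm:3.4}.b identifies the two centres only as abstract algebras, and a priori this identification has nothing to do with the torus identification of Theorem \ref{thm:3.1}.c, which is built from the type for $L_{\mf f}$. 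The paper closes exactly this gap using the cover property (Theorem \ref{thm:3.1}.a) and \cite[Corollary 8.4]{BuKu} (the commutative diagram \eqref{eq:3.36} relating $I^G_{L_{\mf f}U_{L_{\mf f}}}$ to $\mr{ind}_{\C[X_{\mf f}]}^{\mc H(G,\hat P_{\mf f},\hat\sigma\otimes\psi)}$), together with the fact that central characters are detected on such parabolically induced objects; only then does one know the centre isomorphism is pullback along the homeomorphism, giving the bijection of orbit spaces \eqref{eq:3.22}. Your proposal never invokes the cover/parabolic-induction compatibility, so the hinge of the argument is missing.

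Second, even granting equal invariant rings, your Galois-type argument only produces \emph{some} isomorphism $W^\circ (J,\sigma) \cong W_{\mf s_\psi}$ making the homeomorphism equivariant; it does not identify it with the map induced by $W \to W(G,S)$, which is the actual content of (b). To pin that down you would have to show that the conjugation action of (lifts of) $W^\circ (J,\sigma)$ is intertwined by the homeomorphism of Theorem \ref{thm:3.1}.c, and here your plan becomes circular: the bijection $\chi \mapsto \chi \otimes i(\sigma)_\psi$ is conjugation-equivariant only once one knows $i(\sigma)_\psi$ is $W_{\mf s_\psi}$-fixed, i.e.\ part (a), which you propose to deduce from (b). The paper avoids both this circularity and the delicate verification you flag (that conjugation by $\tilde w$ stabilizes $\mf s_{L,\psi}$, including the behaviour of $\psi$): it first obtains an abstract isomorphism that is \emph{unique}, via the Zariski-dense locus of points with trivial stabilizer, deduces (a), and only then observes in (b) that the character-restriction map \eqref{eq:3.25} is manifestly equivariant for the canonical conjugation actions, so by uniqueness the abstract isomorphism must be the canonical one. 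You would need to either import that uniqueness argument or supply the direct stabilization/intertwining proof you yourself identify as the main obstacle; as written, neither is in place.
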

\begin{proof}
(a) It is known from \cite[\S 3]{Lus-Gr} that
\[
Z\big( \mc H (G,\hat P_{\mf f}, \hat \sigma \otimes \psi ) \big) \cong 
\C[X_{\mf f}]^{W^\circ (J,\sigma)} = 
\mc O \big( \Irr (X_{\mf f}) / W^\circ (J,\sigma) \big).
\]
On the other hand, by \cite[Th\'eor\`eme 2.13]{BeDe}, the centre of the category
$\Rep (G)_{\mf s_\psi}$ is $\mc O (\Irr (L_{\mf f})_{\mf s_{L,\psi}} / W_{\mf s_\psi})$.
In view of Theorem \ref{thm:3.1}.a, we may use the properties of covers, in particular
\cite[Corollary 8.4]{BuKu}. The version of \cite[Corollary 8.4]{BuKu} with normalized 
parabolic induction \cite[Lemma 4.1]{SolComp} says that, for any parabolic subgroup $Q U_Q$ 
of $G$ whose Levi factor $Q$ contains $L_{\mf f}$, the following diagram commutes:
\begin{equation}\label{eq:3.36}
\begin{array}{ccc}
\Rep (G)_{\mf s_\psi} & \longrightarrow & 
\mr{Mod} \big( \mc H (G,\hat P_{\mf f}, \hat \sigma \otimes \psi ) \big) \\
\uparrow I^G_{Q U_Q} & & \uparrow 
\mr{ind}_{\mc H (Q,\hat P_{Q,\mf f}, \hat \sigma \otimes \psi |_{\hat P_{Q,\mf f}})}^{
\mc H (G,\hat P_{\mf f}, \hat \sigma \otimes \psi )} \\
\Rep (Q)_{\mf s_{Q,\psi}} & \longrightarrow & 
\mr{Mod}(\mc H (Q,\hat P_{Q,\mf f}, \hat \sigma \otimes \psi |_{\hat P_{Q,\mf f}})
\end{array} .
\end{equation}
Here the embedding 
\[
\mc H (Q,\hat P_{Q,\mf f}, \hat \sigma \otimes \psi |_{\hat P_{Q,\mf f}}) \to 
\mc H (G,\hat P_{\mf f}, \hat \sigma \otimes \psi )
\]
depends on the choice of a parabolic subgroup $Q U_Q$ of $G$ with Levi factor $Q$. 

By Theorem \ref{thm:3.1}.b we may take $\C [X_{\mf f}]$ as the Hecke algebra for 
$(\hat P_{L,\mf f},(\hat \sigma \otimes \psi)|_{\hat P_{L,\mf f}})$.
Then \eqref{eq:3.36} says in particular that Theorem \ref{thm:3.4}.b sends 
\begin{equation}\label{eq:3.21}
I^G_{L_{\mf f} U_{L_{\mf f}}} (\chi \otimes i(\sigma)_\psi) \quad \text{to} \quad 
\mr{ind}_{\C [X_{\mf f}]}^{\mc H (G,\hat P_{\mf f}, 
\hat \sigma \otimes \psi )} (\chi |_{X_{\mf f}}) .
\end{equation}
On both sides of Theorem \ref{thm:3.4}.b the centres of the categories can be
detected by their actions on parabolically induced representations as in \eqref{eq:3.21}.
Thus the bijection $\Irr (L_{\mf f})_{\mf s_{L,\psi}} \to \Irr (X_{\mf f})$ from
Theorem \ref{thm:3.1}.c induces a bijection
\begin{equation}\label{eq:3.22}
\Irr (L_{\mf f})_{\mf s_{L,\psi}} / W_{\mf s_\psi} \to \Irr (X_{\mf f}) / W^\circ (J,\sigma) .
\end{equation}
As both $W_{\mf s_\psi}$ and $W^\circ (J,\sigma)$ are finite groups acting faithfully
by automorphisms of complex affine varieties, the subspaces of these tori on which the
isotropy groups are trivial form Zariski-open dense subvarieties. For $w \in W_{\mf s_\psi}$
and $t \in \Irr (L_{\mf f})_{\mf s_{L,\psi}}$ with image $t' \in \Irr (X_{\mf f})$ such that 
both have trivial stabilizer, the condition that $w(t)$ maps to $w' (t')$ completely determines 
$w'$. That yields a unique group isomorphism $W_{\mf s_\psi} \to W^\circ (J,\sigma)$
such that $\Irr (L_{\mf f})_{\mf s_{L,\psi}} \to \Irr (X_{\mf f})$ becomes 
$W_{\mf s_\psi}$-equivariant. 
Clearly the trivial representation of $X_{\mf f}$ is fixed by $W^\circ (J,\sigma)$, and
hence $i(\sigma)_\psi$ is fixed by $W_{\mf s_\psi}$. \\
(b) By the above, the bijection 
\[
X_\nr (L_{\mf f}) / X_\nr (L_{\mf f},\sigma) \to \Irr (L_{\mf f})_{\mf s_{L,\psi}} :
\chi \mapsto \chi \otimes i(\sigma)_\psi 
\]
from Theorem \ref{thm:3.1}.c is also $W_{\mf s_\psi}$-equivariant. Consequently the 
group isomorphism 
\begin{equation}\label{eq:3.25}
X_\nr (L_{\mf f}) / X_\nr (L_{\mf f}, \sigma) \to \Irr (X_{\mf f})
\end{equation}
induced by \eqref{eq:3.24} is $W_{\mf s_\psi}$-equivariant. The group $W^\circ (J,\sigma)$
acts naturally on $X_{\mf f}$, the action is induced by conjugation in $N_G (S)$.
Conjugation also yields actions of $N_G (S)$ and $W^\circ (J,\sigma)$ on $L_{\mf f}$
and on $X_\nr (L_{\mf f})$. Conjugation by $Z_G (S)$ does not change (unramified)
characters, so the action of $W^\circ (J,\sigma)$ factors through
\begin{equation}\label{eq:3.26}
W^\circ (J,\sigma) \to W \to W(G,S). 
\end{equation}
Thus \eqref{eq:3.25} is equivariant for the canonical actions of $W^\circ (J,\sigma)$,
and is equivariant with respect to the above isomorphism $W^\circ (J,\sigma) \cong
W_{\mf s_\psi}$. By the uniqueness of that isomorphism, it must agree with the map 
induced by \eqref{eq:3.26}.
\end{proof}

We will relate the objects in Subsection \ref{par:Bernstein} for $\mc G$ with those for 
its adjoint group $\mc G_\ad = \mc G / Z(\mc G)$. It is well-known that the enlarged 
Bruhat--Tits building depends only on $\mc G$ up to isogeny, and that the semisimple 
Bruhat--Tits buildings of $(\mc G,K)$ and $(\mc G_\ad,K)$ can be identified. Since 
parahoric subgroups correspond (bijectively) to facets of the semisimple Bruhat--Tits 
building, $\mc G (K)$ and $\mc G_\ad (K)$ have the same set of parahoric subgroups. 
(In $(\mc G_\ad (K)$ more of them can be conjugate, though.) 

The $\mf o_K$-group $\mc G_{\mf f}^\circ$ is isogenous to the direct product of 
$\mc G_{\ad,\mf f}^\circ$ and an $\mf o_K$-torus. Hence $\overline P_{\mf f}$ is 
isogenous (as $k$-group) to the direct product of $\overline P_{\mf f,\ad}$ and a 
$k$-torus. The collection of (cuspidal) unipotent representations of a connected 
reductive $k$-group $\mc H (k)$ only depends on $\mc H$ up to isogeny 
\cite[Proposition 3.15]{Lus-Che} and a $k$-torus has just one irreducible unipotent 
representation, namely the trivial representation. Therefore we may identify the 
collections of cuspidal unipotent representations of $\overline P_{\mf f}$ and 
$\overline P_{\mf f,\ad}$. The same goes for the collections of cuspidal unipotent 
representations of $P_{\mf f}$ and $P_{\mf f,\ad}$. We will denote the cuspidal 
unipotent representation of $P_{\mf f,\ad}$ corresponding to 
$\sigma \in \Irr (P_{\mf f})$ by $\sigma_\ad$.

\begin{lem}\label{lem:4.3}
The following objects are the same for $(G,P_{\mf f},\sigma)$ and for 
$(G_\ad, P_{\mf f,\ad}, \sigma_\ad)$:

$S_{\mf f,\af}, W_\af (J,\sigma), W^\circ (J,\sigma), R_{\mf f}, R_{\mf f}^\vee$ and
$\mc H_\af (G,P_{\mf f},\sigma)$.
\end{lem}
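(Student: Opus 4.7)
My plan is to argue that every one of the six objects is constructed purely from data which descend to the semisimple Bruhat--Tits building and the cuspidal unipotent representation of the finite reductive quotient, both of which are canonically identified between $\mc G$ and $\mc G_\ad$ by the remarks preceding the lemma.

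First I would set up the identifications. The semisimple buildings $\mc{BT}(\mc G,K)$ and $\mc{BT}(\mc G_\ad,K)$ are canonically identified, so we may take the same facet $\mf f$ on both sides and the same standard chamber $C_0$. The affine root system $\Sigma = \Phi \times \Z$ and its set of simple affine roots $\Delta_\af$ are determined by the semisimple building together with its chamber structure, hence are the same for $G$ and $G_\ad$. Thus $J = \Delta_\af \cap \Sigma_{\mf f}$ is the same, and the subset $\Delta_{\mf f,\af} \subset \Delta_\af \setminus J$ of vertices belonging to non-maximal simple factors only depends on the combinatorics of $\Delta_\af$ and $J$. This immediately gives the equality of $S_{\mf f,\af}$, of the Coxeter group $W_\af(J,\sigma)$ it generates, and (via the canonical splitting $W_\af(J,\sigma) = X(J) \rtimes W^\circ(J,\sigma)$, where $X(J)$ is intrinsically the set of elements with finite conjugacy class) of $W^\circ(J,\sigma)$ as an abstract Coxeter subgroup.

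Next I would treat $R_{\mf f}$ and $R_{\mf f}^\vee$. These were defined in \cite[\S 2.22]{LusUni1} in terms of the set $\Delta_{\mf f}$, the finite Weyl group $W^\circ(J,\sigma)$, and the cuspidal unipotent datum; their indexing set $\Delta_{\mf f}$ is determined combinatorially as above, while the labels defining the (co)roots depend on $\sigma$ only through the corresponding object on the finite reductive quotient. Since passing to $\mc G_\ad$ replaces $\overline{P}_{\mf f}$ by an isogenous $k$-group (differing only by a central torus), and cuspidal unipotent representations are invariant under isogeny (\cite[Proposition 3.15]{Lus-Che}) with $\sigma$ corresponding to $\sigma_\ad$, the construction yields the same $R_{\mf f}$ and $R_{\mf f}^\vee$ in either case.

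Finally, for $\mc H_\af(G,P_{\mf f},\sigma)$, I would invoke the description right after \eqref{eq:3.5}: it is the Iwahori--Hecke algebra of $(W_\af(J,\sigma),S_{\mf f,\af})$ with parameter function $q_K^{\mc N}$ defined in \cite[\S 1.18]{LusUni1}. The underlying Coxeter system is the same as just established, and the parameters $\mc N$ are defined intrinsically from $\sigma$ via the action on the $U_{\mf f}/U_{\mf f'}$-isotypic component for adjacent facets $\mf f'$; these are local computations on the finite reductive quotient which are unchanged by central isogeny because both $U_{\mf f}$ and $\sigma$ only depend on the semisimple data. The expected main obstacle is making this last point rigorous, i.e.\ checking that Lusztig's integers $\mc N$ really only see the adjoint quotient; but since \cite[\S 1.18]{LusUni1} expresses them in terms of Hom-dimensions between cuspidal unipotent representations of parahoric quotients, and these representations are matched bijectively by isogeny, the invariance follows with no calculation. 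Putting everything together yields the claimed equality of $\mc H_\af(G,P_{\mf f},\sigma)$ with $\mc H_\af(G_\ad,P_{\mf f,\ad},\sigma_\ad)$.
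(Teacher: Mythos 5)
Your proposal is correct and follows essentially the same route as the paper: identify the affine Coxeter data $(\Delta_\af, J, S_{\mf f,\af}, W_\af (J,\sigma), W^\circ (J,\sigma))$ via the common semisimple building, note that $R_{\mf f}$ and $R_{\mf f}^\vee$ depend only on those data, and reduce the comparison of $\mc H_\af (G,P_{\mf f},\sigma)$ with $\mc H_\af (G_\ad ,P_{\mf f,\ad},\sigma_\ad)$ to the invariance of the parameter function under central isogeny, using the matching of cuspidal unipotent representations of the parahoric quotients. The only place where the paper does more than you indicate is your final step: the parameter of $s_i$ is $q_K^{|n_1 - n_2|}$, where $q_K^{n_j}$ are the dimensions of the two irreducible constituents of $\mr{ind}_{P_J}^{P_{J \cup \{i\}}}(\sigma)$ (not literally a Hom-dimension between cuspidal representations), and the isogeny-invariance of this decomposition is justified by the fact that parabolic induction preserves unipotent representations \cite[Proposition 2.6]{Lus-Che}, so that the induced representation and its constituents are pulled back along $P_{J \cup \{i\}} \to P_{J \cup \{i\},\ad}$ and their dimensions agree.
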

\begin{proof}
The set $\Delta_\af$ depends only on $\mc{BT}(\mc G,K) = \mc{BT}(\mc G_\ad ,K)$ and $J$ is
determined by the facet $\mf f$, so the claim holds for $\Delta_\af \setminus J$ and for
$S_{\mf f,\af}$. Hence also for the Coxeter group $W_\af (J,\sigma)$ with generators
$S_{\mf f,\af}$. We can choose $S_{\mf f} \subset S_{\mf f,\af}$ in the same way for $G$ and 
for $G_\ad$, so the Weyl group $W^\circ (J,\sigma)$ generated by $S_{\mf f}$ does not change
under passage to the adjoint case. 

The construction of $R_{\mf f}$ and $R_{\mf f}^\vee$ in \cite[\S 2]{LusUni1} depends only on
$(W_\af (J,\sigma), S_{\mf f,\af})$, so it is the same for $(G,P_{\mf f})$ and for
$(G_\ad , P_{\mf f,\ad})$. The parameters $q_K^{\mc N (\alpha)}$ for $S_{\mf f,\af}$ 
used in $\mc H_\af (G,P_{\mf f},\sigma)$ are defined in \cite[\S 1.18]{LusUni1} and
\cite[\S 6.9 and \S 7.1]{Mor1}. For the parameter of $s_\alpha = s_i$, consider the 
standard parahoric subgroup $P_{J \cup \{i\}}$ of $G$ determined by $J \cup \{i\}$.
It contains $P_{\mf f} = P_J$, and $\mr{ind}_{P_J}^{P_{J \cup \{i\}}} (\sigma)$ is
a direct sum of two irreducible representations, say $\sigma_1$ and $\sigma_2$.
Write dim$(\sigma_j) = q_K^{n_j}$ with $n_j \in \Z_{\geq 0}$, then the parameter of
$s_i$ is 
\[
q_i = q_K^{|n_1 - n_2|} .
\]
It follows from \cite[Proposition 2.6]{Lus-Che} that the class of unipotent
representations of connected reductive groups over finite fields is closed under
parabolic induction. In particular $\mr{ind}_{P_J}^{P_{J \cup \{i\}}} (\sigma)$
is again unipotent, and independent of isogenies of the involved group. More
explicitly, via the map $P_{J \cup \{i\}} \to P_{J \cup \{i\},\ad}$ this representation 
is isomorphic to $\mr{ind}_{P_{J,\ad}}^{P_{J \cup \{i\},\ad}} (\sigma_\ad)$. (Of course 
the isomorphism can also be seen more elementarily.) It follows that pullback from 
$P_{J \cup \{i\},\ad}$ to $P_{J \cup \{i\}}$ also defines isomorphisms
$\sigma_{\ad,j} \cong \sigma_j$. Comparing their dimensions, we find that $q_{\ad,i} = q_i$.

The Iwahori--Hecke algebra $\mc H_\af (G,P_{\mf f},\sigma)$ depends only on
$W_\af (J,\sigma), S_\af$ and the parameters $q_i$ for $i \in \Delta_\af \setminus J$,
so it is naturally isomorphic to $\mc H_\af (G_\ad,P_{\mf f,\ad},\sigma_\ad)$.
\end{proof}

Now we can formulate a precise comparison between the affine Hecke algebras 
\begin{equation}\label{eq:4.6}
\mc H (G,\hat P_{\mf f},\hat \sigma \otimes \psi) \quad \text{and} \quad
\mc H (G_\ad,\hat P_{\mf f,\ad}, \hat \sigma_\ad \otimes \psi_\ad),
\end{equation}
for any choice of $\psi_\ad$. From Lemma \ref{lem:4.3}
and \eqref{eq:3.16} we see that they only differ in the underlying lattices:
$X_{\mf f}$ is usually not equal to the weight lattice $X_{\mf f, \ad}$. 

\section{Comparison of Hecke algebras}

Let $\mc G$ be a connected reductive $K$-group which splits over an unramified extension
of $K$. We denote the set of ($G^\vee$-equivalence classes of) unramified L-parameters
for $G$ by $\Phi_\nr (G)$. We indicate the set of unipotent representations in 
$\Irr (G)$ (or Rep$(G)$ etc.) by a subscript ``unip". 

A character of $G$ is called weakly unramified if it is trivial on the kernel of 
the Kottwitz homomorphism $G \to \Omega$. The group $X_\Wr (G)$ of weakly unramified
characters $G \to \C^\times$ is naturally isomorphic to an object coming from ${}^L G$:
\begin{equation}\label{eq:4.15}
X_\Wr (G) \cong (Z (G^\vee)^{\mb I_K} )_\Fr \quad \subset H^1 (\mb W_K ,Z(G^\vee)) , 
\end{equation}
see \cite[\S 3.3.1]{Hai}. Its identity component is the group $X_\nr (G)$ of unramified 
characters $G \to \C^\times$. Via \eqref{eq:4.15} and \eqref{eq:2.10}, $X_\Wr (G)$ acts 
naturally on $\Phi_e (G)$, while it acts on $\Rep (G)$ by tensoring.

Recall that the HII conjectures \cite{HII} compare the formal degree of a square-integrable
modulo centre $G$-representation with (the specialization at $s=0$ of) the adjoint 
$\gamma$-factor of its L-parameter.

We formulate the main result of \cite{FOS}, and then derive some useful consequences.

\begin{thm}\label{thm:4.1} \textup{\cite[Theorems 2 and 3]{FOS}} \\
There exists a bijective map
\[
\Irr_{\cusp,\unip}(G) \to \Phi_{\nr,\cusp}(G) : \pi \mapsto (\lambda_\pi ,\rho_\pi)
\]
with the following properties:
\begin{enumerate}[(i)]
\item Equivariance with respect to the natural actions of $X_\Wr (G)$.
\item Compatibility with almost direct products of reductive groups.
\item Equivariance with respect to $\mb W_K$-automorphisms of the absolute
root datum of $\mc G$.
\item The map $\pi \mapsto \lambda_\pi$ makes the HII conjectures true for 
$\Irr_{\cusp,\unip}(G)$.
\item The map $\pi \mapsto \lambda_\pi$ is determined by the above properties
(i), (ii) and (iv), up to twisting by weakly unramified characters.
\end{enumerate}
\end{thm}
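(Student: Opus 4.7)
The statement is quoted from \cite{FOS}, so my ``proof proposal'' is really a description of the strategy one would execute there. The plan breaks into three stages: reduction, combinatorial matching, and rigidification by formal degrees.

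First, I would use property (ii) to reduce the construction to the case of an almost simple $K$-group $\mc G$. Via the Kottwitz isomorphism, inner forms of the unique quasi-split such group $\mc G^*$ are parametrized by characters $\zeta_{\mc G}$ of $Z({G^\vee}_{\Sc})^{\mb W_K}$, and the relevant enhanced L-parameters are exactly those whose attached central character equals $\zeta_{\mc G}$ (cf.\ the discussion of relevance in Section \ref{sec:2}). It therefore suffices to construct one bijection, running simultaneously over all inner forms of $\mc G^*$, between the supercuspidal unipotent representations of these groups and the full set $\Phi_{\nr,\cusp}(\mc G^*)$ stratified by $\zeta_{\mc G}$. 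For groups obtained by Weil restriction, Lemma \ref{lem:2.2} and its counterpart on the representation side handle the descent to an absolutely almost simple situation.

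Next I would carry out the combinatorial matching. By the results used in Section \ref{sec:3}, every $\pi \in \Irr_{\cusp,\unip}(G)$ is compactly induced from an extension to $\hat P_{\mf f}$ of a cuspidal unipotent $\overline\sigma \in \Irr (\overline{P_{\mf f}})$, where $\mf f$ is a vertex of $\mc{BT}(\mc G,K)$; the choice of extension is indexed by $\Irr(\hat P_{\mf f}/P_{\mf f})$, which mediates the Kottwitz datum. Lusztig's classification of cuspidal unipotent representations of finite reductive groups provides a combinatorial parametrization of the pairs $(\mf f, \overline\sigma)$. On the Galois side, the definition of cuspidality recalled in Section \ref{sec:2} expresses $\Phi_{\nr,\cusp}(G)$ via unipotent classes and cuspidal local systems in the centralizer of $\phi(\Fr)$, again classified by Lusztig. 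A direct inspection then produces combinatorial bijections between the two sets, and the equivariance requirements (i), (ii), (iii) already pin down the construction up to a small finite ambiguity.

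Finally, I would rigidify via the HII conjectures, i.e.\ property (iv). Lusztig's character formulas give an explicit expression for the formal degree of $\pi$ in terms of $\dim(\overline\sigma)$ and a rational function in $q_K$, while the adjoint $\gamma$-factor of $(\lambda,\rho)$ is a product of local $L$-factors that can be computed explicitly from the Lusztig symbol attached to $(u_\phi,\rho)$. Equating these two rational functions of $q_K$ forces exactly one combinatorial match among the finite ambiguity, yielding (iv); properties (i)--(iii) are then verified a posteriori, and (v) follows because $X_\Wr(G)$ acts freely on the finite fibers that formal degrees fail to separate. The main obstacle is the term-by-term HII verification across all the exceptional and twisted cases --- in particular for inner forms of $E_7$ and $E_8$, the ramified $E_6$, and the various outer forms of classical groups --- where Lusztig's non-abelian Fourier transform must be invoked to identify the enhancement $\rho_\pi$ precisely, and where one must carefully reconcile the normalization of the $\gamma$-factor with the volume convention used to define the formal degree.
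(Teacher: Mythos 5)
This paper does not prove Theorem \ref{thm:4.1}: the statement is imported as a black box from \cite[Theorems 2 and 3]{FOS}, so there is no internal proof to measure your proposal against. Taken as a summary of the strategy of \cite{FOS}, your outline is broadly faithful: reduction to absolutely almost simple groups through isogenies and Weil restriction, a matching of Lusztig's combinatorial classification of the pairs $(\mf f, \overline\sigma)$ against cuspidal unramified enhanced parameters (organized over all inner forms by the Kottwitz character $\zeta_{\mc G}$), rigidification by equating formal degrees with adjoint $\gamma$-factors, and the uniqueness statement (v) resting on the uniqueness property of \cite{FeOp}.

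Two substantive caveats. First, the passage from the adjoint, absolutely simple case to a general reductive $\mc G$ is not a formal consequence of property (ii): $G$ is only isogenous to an almost direct product of $G_\der$ and $Z(\mc G)^\circ (K)$, and unipotent representations do not simply factor through isogenies. In \cite[\S 14--15]{FOS} this step is carried out by realizing $\pi$, up to a weakly unramified twist, inside the pullback of an adjoint-group representation along $G \to G_\ad$, together with careful bookkeeping of the enhancement $\rho_\pi$ via $\zeta_{\mc G}$; this is precisely the form of the result that the present paper exploits later (proof of Lemma \ref{lem:5.12}, with Lemma \ref{lem:2.2} handling Weil restriction on the Galois side). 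Your sketch leaves that reduction, which accounts for much of the actual work in \cite{FOS}, essentially unexamined. Second, minor slips: supercuspidal unipotent representations are induced from the full normalizer $N_G (P_{\mf f})$ of a maximal parahoric (open and compact modulo centre), not compactly induced from $\hat P_{\mf f}$ alone (compare the definition of $\mf s_i$ before \eqref{eq:3.15}); and a ``ramified $E_6$'' does not occur in this setting, since every group considered splits over an unramified extension of $K$.
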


Let $\mf{Be}(G)_\unip$ be the subset of $\mf{Be}(G)$ obtained from $\Irr_\unip (G)$,
and similarly let $\mf{Be}^\vee (G)_\nr$ be the subset of $\mf{Be}^\vee (G)$
obtained from $\Phi_{\nr,e}(G)$.

\begin{prop}\label{prop:4.2}
\enuma{
\item Theorem \ref{thm:4.1} induces a bijection 
\[
\mf{Be}(G)_\unip \to \mf{Be}^\vee (G)_\nr : \mf s \mapsto \mf s^\vee .
\]
If $\mf s$ can be represented by a cuspidal inertial class for a Levi subgroup $L$ of $G$,
then so can $\mf s^\vee$, and conversely.
\item Suppose that $\mf s = [L,\pi_L]_G$ for some $\pi_L \in \Irr_{\cusp,\unip}(L)$.
There is a natural isomorphism $W_{\mf s} \cong W_{\mf s^\vee}$, and it makes the bijection
$\Irr (L)_{\mf s_L} \to \Phi_e (L)^{\mf s_L^\vee}$ $W_{\mf s}$-equivariant.
}
\end{prop}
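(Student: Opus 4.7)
The plan is to define the map on the level of cuspidal data and then transport it to inertial classes. Given $\mf s \in \mf{Be}(G)_\unip$, write $\mf s = [L,\pi_L]_G$ with $\pi_L \in \Irr_{\cusp,\unip}(L)$ (possible by definition of a unipotent Bernstein class). Apply Theorem \ref{thm:4.1} to $L$ to obtain $(\lambda_{\pi_L},\rho_{\pi_L}) \in \Phi_{\nr,\cusp}(L)$, and then use Corollary \ref{cor:1.3} to identify $L$ with the $\mc G(K)$-relevant L-Levi $L^\vee \rtimes \mb W_K$ of ${}^L G$. Set
\[
\mf s^\vee \;=\; \big[\, L^\vee \rtimes \mb W_K ,\, X_\nr(L)\cdot(\lambda_{\pi_L},\rho_{\pi_L})\, \big]_{G^\vee} \;\in\; \mf{Be}^\vee (G)_\nr .
\]

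First I would check that this is independent of all choices. Replacing $\pi_L$ inside $X_\nr(L)\pi_L$ is absorbed by property (i) of Theorem \ref{thm:4.1}, which ensures the Galois-side image stays in $X_\nr(L)\cdot(\lambda_{\pi_L},\rho_{\pi_L})$. Conjugating $L$ by an element of $G$ and transporting $\pi_L$ accordingly is controlled by Theorem \ref{thm:4.1}(iii): a $K$-rational element of $N_G(L)/L$ induces a $\mb W_K$-equivariant automorphism of the absolute root datum of $\mc L$, which via \eqref{eq:2.8} (with $K'=K$) corresponds to conjugation by an element of $N_{G^\vee}(L^\vee \rtimes \mb W_K)/L^\vee$, hence stays inside the same $G^\vee$-orbit.

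For the bijectivity in (a), surjectivity comes from running the construction backwards: any $\mf s^\vee \in \mf{Be}^\vee(G)_\nr$ admits a representative $(L^\vee \rtimes \mb W_K, X_\nr(L)\cdot(\phi_L,\rho_L))$ with $(\phi_L,\rho_L) \in \Phi_{\nr,\cusp}(L)$; by Corollary \ref{cor:1.3} the Levi $K$-subgroup $\mc L$ is determined up to $G$-conjugacy, and by Theorem \ref{thm:4.1} the pair $(\phi_L,\rho_L)$ lifts to some $\pi_L \in \Irr_{\cusp,\unip}(L)$. Injectivity is the same argument in reverse: if $[L_1,\pi_{L_1}]_G$ and $[L_2,\pi_{L_2}]_G$ share the same $\mf s^\vee$, then after a $G^\vee$-conjugation their L-Levis coincide, whence by Corollary \ref{cor:1.3} we may take $L_1 = L_2 = L$; the remaining $N_{G^\vee}(L^\vee \rtimes \mb W_K)/L^\vee$-conjugacy of cuspidal L-parameters pulls back via \eqref{eq:2.8} and Theorem \ref{thm:4.1} to an $N_G(L)$-conjugacy of the $\pi_{L_i}$ modulo $X_\nr(L)$. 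The second statement of (a) is built into the construction, since Corollary \ref{cor:1.3} matches the Levis on the two sides.

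For part (b), the isomorphisms \eqref{eq:1.3} and \eqref{eq:2.8} give a canonical identification
\[
N_G(L)/L \;\cong\; W(\mc G,\mc L) \;\cong\; N_{G^\vee}(L^\vee \rtimes \mb W_K)/L^\vee .
\]
The bijection $\Irr(L)_{\mf s_L} \to \Phi_e(L)^{\mf s_L^\vee}$ is obtained from Theorem \ref{thm:4.1} by $X_\nr(L)$-translation of the cuspidal representative, and by the same argument as above it is equivariant for the actions of these two ambient groups. Passing to stabilizers of $\mf s_L$, respectively $\mf s_L^\vee$, yields the desired natural isomorphism $W_{\mf s} \cong W_{\mf s^\vee}$ together with the equivariance of the bijection on inertial classes. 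The main obstacle is verifying the equivariance statement that drives both (a) (well-definedness under $N_G(L)$-conjugation) and (b) (intertwining of Weyl-type actions); concretely, one must match the geometric conjugation on the reductive side with the Galois-side action through \eqref{eq:2.8}, which rests on the compatibility result \cite[Proposition 3.1]{ABPSLLC} underlying \eqref{eq:1.3}.
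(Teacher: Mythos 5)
Your proposal is correct and follows essentially the same route as the paper: both reduce to a fixed Levi $L$ via Corollary \ref{cor:1.3}, use property (i) of Theorem \ref{thm:4.1} to match cuspidal inertial classes for $L$, and use property (iii) together with the identification $N_G(L)/L \cong N_{G^\vee}(L^\vee \rtimes \mb W_K)/L^\vee$ from \cite[Proposition 3.1]{ABPSLLC} (the paper's \eqref{eq:4.2}, your \eqref{eq:2.8}) to get equivariance, then pass to stabilizers for part (b). The only cosmetic difference is that you phrase it as well-definedness/injectivity/surjectivity of an explicitly constructed map, whereas the paper phrases it as an equivariant bijection between orbit spaces for identified group actions.
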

\begin{proof}
(a) By Corollary \ref{cor:1.3} it suffices to show this for inertial equivalence classes
based on objects for a Levi subgroup $L$ of $G$. By property (i), the bijection in Theorem 
\ref{thm:4.1} induces a bijection
\[
\Irr_{\cusp,\unip}(G) / X_\nr (G) \to \Phi_{\nr,\cusp}(G) / X_\nr (G) .
\] 
Applying this to $L$, we obtain a bijection between the cuspidal inertial classes for
$\Irr_\unip (L)$ and $\Phi_{\nr,e}(L)$, say
\begin{equation}\label{eq:4.1}
\mf s_L \longleftrightarrow \mf s_L^\vee .
\end{equation}
Two such classes $\mf s_L, \mf s'_L$ become the same in $\mf{Be}(G)$ if and only if they
are conjugate by an element of $N_G (L)$. As $L$ acts trivially here, this is equivalent to 
$\mf s_L$ and $\mf s'_L$ being in the same orbit under $N_G (L) / L$. The action of
$N_G (L) / L$ on $\Irr (L)$ comes from its action (by $\mb W_K$-equivariant automorphisms)
on the absolute root datum of $\mc L$.

Similarly (cf. \cite[(117)]{AMS1}), the two classes $\mf s_L^\vee, {\mf s'}_L^\vee$ 
become the same in $\mf{Be}^\vee (G)$ if and only if they are conjugate by an element of
$N_{G^\vee}(L^\vee \rtimes \mb W_K)$, or equivalently by an element of $N_{G^\vee}(L^\vee 
\rtimes \mb W_K) / L^\vee$. This action of $N_{G^\vee}(L^\vee \rtimes \mb W_K) / L^\vee$
is determined by its action on the absolute root datum of $L^\vee$ (or equivalently that
of $\mc L$).

By \cite[Proposition 3.1]{ABPSLLC} there is a natural isomorphism 
\begin{equation}\label{eq:4.2}
N_G (L) / L \cong N_{G^\vee}(L^\vee \rtimes \mb W_K ) / L^\vee .
\end{equation}
Its construction entails that both sides act in the same way on the absolute root datum of
$\mc L$. Now property (iii) of Theorem \ref{thm:4.1} for $L$ says that 
$\pi_L \mapsto (\phi_{\pi_L},\rho_{\pi_L})$ is equivariant for the indicated actions of 
\eqref{eq:4.2}. \\
(b) By definition $W_{\mf s}$ is the stabilizer of $\mf s_L \cong \Irr (L)_{\mf s_L}$ in
$N_G (L) / L$, and $W_{\mf s^\vee}$ is the stabilizer of $\mf s_L^\vee \cong 
\Phi_e (L)^{\mf s_L^\vee}$ in $N_{G^\vee}(L^\vee \rtimes \mb W_K) / L^\vee$. 
By the above $N_G (L) / L$-equivariance of \eqref{eq:4.1}, the isomorphism \eqref{eq:4.2} 
restricts to $W_{\mf s^\vee} \cong W_{\mf s}$. In particular the bijection 
$\Irr (L)_{\mf s_L} \to \Phi_e (L)^{\mf s_L}$ from Theorem \ref{thm:4.1}
becomes equivariant for $W_{\mf s}$.
\end{proof}

Let us compare the Hecke algebras for L-parameters to those in the adjoint case. 
Replacing $\mc G$ by $\mc G_\ad$ means that $G^\vee$ is replaced by ${G^\vee}_\Sc$, the 
simply connected cover of the derived group of $G^\vee$. Let $\mc L \subset \mc G$ be a 
Levi $K$-subgroup and write $\mc L_c = \mc L / Z(\mc G) \subset \mc G_\ad$, so that
$L_c^\vee \subset {G^\vee}_\Sc$ is the Levi subgroup determined by $L \subset G$. 

Let $(\phi_L ,\rho_L) \in \Phi_{\nr,\cusp}(L)$. Since $\phi_L (\Fr)$ determines 
$\phi_L |_{\mb W_F}$ completely, it is easy to lift $\phi_L$ to a L-parameter 
$\phi_{L,\ad}$ for $L_c$: we only have to pick a lift of $\phi_L (\Fr)$ in 
$L_c^\vee \rtimes \mb W_K$. Then 
\[
Z_{{L^\vee}_\Sc}( \phi_{L,\ad})^\circ = Z_{{L^\vee}_\Sc}( \phi_L)^\circ 
\]
(in fact these groups are trivial because $\phi_L$ is discrete) and
\begin{equation}\label{eq:4.13}
Z^1_{{L^\vee}_\Sc}( \phi_{L,\ad}) \subset Z^1_{{L^\vee}_\Sc}( \phi_L) .
\end{equation}
Hence $\mc S_{\phi_{L,\ad}}$ is naturally embedded in $\mc S_{\phi_L}$. Let $\rho_{L,\ad}$
be an irreducible representation of $\mc S_{\phi_{L,\ad}}$ appearing in
$\rho \big|_{\mc S_{\phi_{L,\ad}}}$. The conditions for $\rho_L$ to be cuspidal and
L-relevant depend only on 
\begin{equation}\label{eq:4.4}
(L^\vee_{\phi_{L,\ad}})^\circ = Z_{{L^\vee}_\Sc}( \phi_{L,\ad} (\mb W_K))^\circ = 
Z_{{L^\vee}_\Sc}( \phi_L (\mb W_K))^\circ = (L^\vee_{\phi_L})^\circ ,
\end{equation}
so $(\phi_{L,\ad},\rho_{L,\ad}) \in \Phi_{\nr,\cusp}(L_c)$.

Let $\pi_L \in \Irr_{\cusp,\unip}(L)$ and $\pi_{L,\ad} \in \Irr_{\cusp,\unip}(L_c)$ be the
representations associated to, respectively, $(\phi_L,\rho_L)$ and $(\phi_{L,\ad},\rho_{L,\ad})$
by Theorem \ref{thm:4.1}. The constructions in \cite[\S 14--15]{FOS} entail that, up to a
twist by a weakly unramified character, $\pi_L$ is contained in the pullback of $\pi_{L,\ad}$
along $q : L \to L_c$. 

Let $\mf s^\vee_{L,\ad}$ be the inertial class for $\Phi_e (L_c)$ containing 
$(\phi_{L,\ad},\rho_{L,\ad})$, and let $\mf s^\vee_\ad$ be the resulting inertial equivalence
class for $\Phi_e (G)$. We note that the canonical homomorphism $q^\vee : L_c^\vee \to L^\vee$ 
induces maps 
\begin{equation}\label{eq:4.14}
\mf s^\vee_{L,\ad} \to \mf s^\vee_L : (\phi'_{L,\ad},\rho_{L,\ad}) \mapsto
({}^L q \circ \phi'_{L,\ad}, \rho_L)
\end{equation}
and $\mf s^\vee_\ad \to \mf s^\vee$. These maps depend on the choice of $\rho_{L,\ad}$
and $\rho_L$, but given $\mf s^\vee_{L,\ad}$ and $\mf s^\vee_L$, they are canonical.

Let $\mf s \in \mf{Be}(G)$ and $\mf s_\ad \in \mf{Be}(G_\ad)$ be the inertial equivalence 
classes obtained from $\mf s^\vee$ and $\mf s^\vee_\ad$ via Proposition \ref{prop:4.2}. 

\begin{lem}\label{lem:4.4}
The following objects are the same for $G^\vee$ and for ${G^\vee}_\Sc$, up to natural
isomorphisms: $W_{\mf s^\vee}, \Phi_{\mf s^\vee}, \lambda, \lambda^*$.

For any $\alpha \in \Phi_{\mf s^\vee}$ the function $\theta_{\alpha,\ad} \in 
\mc O (\mf s^\vee_{L,\ad})$ is the composition of $\theta_\alpha \in \mc O (\mf s^\vee_L)$
with the canonical map $\mf s^\vee_{L,\ad} \to \mf s^\vee_L$.
\end{lem}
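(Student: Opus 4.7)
The overall strategy is to exploit the fact that every datum used to build $\mc H (\mf s^\vee,\vec{v})$ in Section \ref{sec:2} lives naturally inside $G^\vee_{\Sc}$ and its Levi $L_c^\vee$, rather than in $G^\vee$ itself. Passing from $\mc G$ to $\mc G_{\ad}$ replaces $G^\vee$ by $G^\vee_\Sc$, but the simply connected cover of the derived group of $G^\vee_\Sc$ is again $G^\vee_\Sc$, so the group playing the role of ``$G^\vee_\Sc$'' is the same in both settings. The proof therefore reduces to checking the remaining ingredients separately.

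First I would treat $W_{\mf s^\vee}$. By Proposition \ref{prop:4.2}.b it is isomorphic to the rational Weyl group $W_{\mf s} \subset N_G (L)/L$, and the central isogeny $\mc G \to \mc G_{\ad}$ induces a natural isomorphism $N_G (L)/L \cong N_{G_\ad}(L_c)/L_c$ because both are computed from the $\Gamma_K$-action on the common root datum via \eqref{eq:1.3} and \eqref{eq:4.2}. For $\Phi_{\mf s^\vee}$ and the labels $\lambda,\lambda^*$, everything is built from the group $(G^\vee_{t\phi_L})^\circ = Z_{G^\vee_\Sc}(t\phi_L(\mb W_K))^\circ$ acting on the torus $Z(L_c^\vee)^{\mb W_K,\circ}$, together with the unipotent element $u_{\phi_L}$ and the enhancement $\rho_L$. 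Extending the identity \eqref{eq:4.4} to $G^\vee_\Sc$ and to the twists $t\phi_L$ that appear in the construction, one gets $(G^\vee_{t\phi_L})^\circ = (G^\vee_{t\phi_{L,\ad}})^\circ$; the torus $Z(L_c^\vee)^{\mb W_K,\circ}$ is visibly intrinsic to $L_c^\vee$; and $u_{\phi_{L,\ad}} = u_{\phi_L}$ because $\phi|_{SL_2 (\C)}$ lifts uniquely through the central isogeny $q^\vee$. Hence the graded Hecke algebras of \cite[\S 1]{AMS3} and the labels $c(q\alpha), c^*(q\alpha)$ of \cite[Propositions 2.8, 2.10 and 2.12]{LusCusp} agree on both sides, and so do $\lambda$ and $\lambda^*$.

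For the $\theta_\alpha$-compatibility, my plan is to unwind \eqref{eq:2.2} and \cite[Proposition 3.9]{AMS3}. Fixing compatible basepoints $\phi_{L,\ad}$ and $\phi_L = {}^L q \circ \phi_{L,\ad}$ produces identifications $T_{\mf s_L^\vee} \cong \mf s_L^\vee$ and $T_{\mf s_{L,\ad}^\vee} \cong \mf s_{L,\ad}^\vee$. The pullback homomorphism $X_\nr (L_c) \to X_\nr (L)$ along $q \colon L \to L_c$, combined with the common map out of $Z(L_c^\vee)^{\mb W_K,\circ}$ in \eqref{eq:2.2}, yields a homomorphism $T_{\mf s_{L,\ad}^\vee} \to T_{\mf s_L^\vee}$ which, under the basepoint identifications, realises the canonical map $\mf s_{L,\ad}^\vee \to \mf s_L^\vee$ from \eqref{eq:4.14}. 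Since $\alpha \in \Phi_{\mf s^\vee}$ comes from the \emph{same} character of $Z(L_c^\vee)^{\mb W_K,\circ}$ in both settings, the corresponding characters of $T_{\mf s_L^\vee}$ and $T_{\mf s_{L,\ad}^\vee}$ pull back to one another, giving $\theta_{\alpha,\ad} = \theta_\alpha \circ \big( \mf s_{L,\ad}^\vee \to \mf s_L^\vee \big)$. I expect the main obstacle to be exactly this bookkeeping with basepoints: the isomorphism $T_{\mf s^\vee} \cong \mf s_L^\vee$ depends on a choice, and to obtain an equality of functions (rather than merely an equality up to a scalar) one must make compatible basepoint choices on both sides, which has to be extracted from \cite[Proposition 3.9]{AMS3} with some care.
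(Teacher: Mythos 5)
Most of your proposal coincides with the paper's own argument: for $\Phi_{\mf s^\vee}$, $\lambda$, $\lambda^*$ and $\theta_\alpha$ the paper argues exactly as you do, namely that every ingredient lives in $Z_{{G^\vee}_\Sc}(t \phi_L (\mb W_K))^\circ$ and $Z(L_c^\vee)^{\mb W_K,\circ}$, which are unchanged when $G^\vee$ is replaced by ${G^\vee}_\Sc$ (the matching of the relevant twists $t$ is taken from the proof of \cite[Lemma 3.12]{AMS3}), and the basepoint issue you flag for the $\theta_\alpha$ is disposed of by \cite[Proposition 3.9.b]{AMS3}, since all roots take the value $1$ at the basepoints of $\mf s_L^\vee$ and $\mf s_{L,\ad}^\vee$.

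The genuine gap is in your treatment of $W_{\mf s^\vee}$. What you establish is only an isomorphism of the ambient groups: $W_{\mf s^\vee} \cong W_{\mf s} \subset N_G (L)/L \cong N_{G_\ad}(L_c)/L_c \supset W_{\mf s_\ad} \cong W_{\mf s^\vee_\ad}$. The lemma needs the two \emph{stabilizer} subgroups to correspond under this identification, i.e. $W_{\mf s} = W_{\mf s_\ad}$, equivalently $W_{\mf s^\vee} \cong W_{\mf s^\vee_\ad}$, and this is not automatic: an element of the common normalizer quotient could a priori stabilize $X_\nr (L) \cdot (\phi_L,\rho_L)$ without stabilizing $X_\nr (L_c) \cdot (\phi_{L,\ad},\rho_{L,\ad})$, since the enhancements live on different component groups ($\mc S_{\phi_{L,\ad}}$ only embeds into $\mc S_{\phi_L}$, see \eqref{eq:4.13}) and the unramified-twist orbits differ; moreover Theorem \ref{thm:4.1} is not stated to be compatible with the isogeny $L \to L_c$ (only that $\pi_L$ occurs in the pullback of $\pi_{L,\ad}$ up to a weakly unramified twist). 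The paper closes precisely this point with the commutative diagram \eqref{eq:4.3}: by Lemma \ref{lem:4.3} the type-theoretic data for $G$ and $G_\ad$ coincide (in particular $W^\circ (J,\sigma)$), by Lemma \ref{lem:3.3}.b one has $W^\circ (J,\sigma) \cong W_{\mf s_\psi}$, so the right column of \eqref{eq:4.3} restricts to an isomorphism $W_{\mf s} \to W_{\mf s_\ad}$, and commutativity transfers this to the left column, giving $W_{\mf s^\vee} \cong W_{\mf s^\vee_\ad}$. You need this detour through the Hecke-algebra/type side (or some direct comparison of the two stabilizers) to complete the first assertion of the lemma.
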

\begin{proof}
The canonical maps ${G^\vee}_\Sc \to G^\vee, G \to G_\ad$ and \eqref{eq:4.2} combine to
a commutative diagram
\begin{equation}\label{eq:4.3}
\begin{array}{ccc}
N_{{G^\vee}_\Sc}(L_c^\vee \rtimes \mb W_K) / L_c^\vee & \cong & N_{G_\ad}(L_c) / L_c \\
\downarrow & & \downarrow \\
N_{G^\vee}(L^\vee \rtimes \mb W_K) / L^\vee & \cong & N_G (L) / L 
\end{array}
\end{equation}
It is easy to see that the vertical maps in \eqref{eq:4.3} are isomorphisms. The groups
$W_{\mf s^\vee}, W_{\mf s^\vee_\ad}, W_{\mf s}$ and $W_{\mf s_\ad}$ are contained in
the corners of this diagram, as the subgroups stabilizing, respectively, $\mf s^\vee,
\mf s^\vee_\ad, \mf s$ and $\mf s_\ad$. In Proposition \ref{prop:4.2}.b we showed that
the rows in \eqref{eq:4.3} restrict to isomorphisms $W_{\mf s^\vee_\ad} \cong W_{\mf s_\ad}$
and $W_{\mf s^\vee} \cong W_{\mf s}$. By Lemmas \ref{lem:4.3} and \ref{lem:3.3}.b
the right column of \eqref{eq:4.3} restricts to an isomorphism $W_{\mf s} \to W_{\mf s_\ad}$.
By the commutativity of the diagram, the left column restricts to an isomorphism
$W_{\mf s^\vee} \to W_{\mf s^\vee_\ad}$.

By \eqref{eq:4.4} and \cite[Lemma 3.10]{AMS3}, both $\Phi_{\mf s^\vee}$ and 
$\Phi_{\mf s^\vee_\ad}$ come from the same root system 
$\Phi \big( (G^\vee_{\phi_L})^\circ , Z(L_c^\vee)^{\mb W_F,\circ} \big)$. 
This implies that the canonical map 
\begin{equation}\label{eq:4.5}
T_{\mf s^\vee_\ad} \to T_{\mf s^\vee} \text{ provides a bijection }
\Phi_{\mf s_\ad^\vee} \to \Phi_{\mf s^\vee} .
\end{equation}
Recall that all these roots evaluate to 1 on the basepoints of $\mf s_L^\vee$ and 
$\mf s_{L,\ad}^\vee$ \cite[Proposition 3.9.b]{AMS3}. Hence the functions $\theta_\alpha$ and 
$\theta_{\alpha,\ad}$ they determine on, respectively, $\mf s_L^\vee$ and $\mf s_{L,\ad}^\vee$,
are related by composition with the canonical map from \eqref{eq:4.14}.

As described after \eqref{eq:2.3}, the label functions $\lambda$ and $\lambda^*$ depend only
on objects living in $Z_{{G^\vee}_\Sc} (t \phi_L (\mb W_K))^\circ$, for a few
$t \in (Z(L^\vee)^{\mb I_K})^\circ_\Fr$. From the proof of \cite[Lemma 3.12]{AMS3} one sees that
if $t_\ad \phi_{L,\ad} \in \Phi_\nr (L_c)$ is suitable to compute $\lambda_\ad (\alpha)$ and 
$\lambda_\ad^* (\alpha)$, then its image $t \phi_L$ in $\Phi_\nr (L)$ is suitable to compute
$\lambda (\alpha)$ and $\lambda^* (\alpha)$. Using these $t$'s, we see that $\lambda, \lambda^*$
and $\lambda_\ad, \lambda_\ad^*$ are given by the same formulas, namely those in the proof of 
\cite[Lemma 3.12]{AMS3}. Hence $\lambda_\ad = \lambda$ and $\lambda_\ad^* = \lambda^*$, with
the canonical bijection \eqref{eq:4.5} in mind.
\end{proof}

From Lemma \ref{lem:4.4} and the discussion following \eqref{eq:2.4} we see that the affine 
Hecke algebras
\begin{equation}\label{eq:4.7}
\mc H (\mf s^\vee, \vec{v}) \quad \text{and} \quad \mc H (\mf s^\vee_\ad, \vec{v})
\end{equation}
have almost the same presentation. Only the tori differ, and those are related via the 
map $\mf s^\vee_{L,\ad} \to \mf s^\vee_L$ from \eqref{eq:4.14}.

\begin{thm}\label{thm:4.5}
Let $(G, \hat P_{\mf f}, \hat \sigma \otimes \psi)$ and $\mf s_\psi \in \mf{Be}(G)$ be as in
Theorem \ref{thm:3.4}, and let $\mf s^\vee_\psi \in \mf{Be}^\vee (G)$ be the image of 
$\mf s^\vee_\psi$ under Proposition \ref{prop:4.2}.a. Theorem \ref{thm:4.1} and Proposition 
\ref{prop:4.2}.b induce a unique algebra isomorphism
\[
\mc H (\mf s^\vee_\psi, \vec{v}) \to \mc H (G, \hat P_{\mf f}, \hat \sigma \otimes \psi) .
\]
It comes from a canonical isomorphism between the underlying based root data. 
The requirement that (under the correspondence between the roots on both sides) 
$\vec{v}^\lambda$ and $\vec{v}^{\lambda^*}$ must agree with the parameter function $q_K^{\mc N}$ 
for $\mc H_\af (G, P_{\mf f}, \sigma)$,  determines $\vec{v}$.
\end{thm}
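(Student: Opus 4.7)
The two algebras in question are both (extended) affine Hecke algebras presented in the Bernstein form, so the strategy is to exhibit matching pieces of data: a bijection of the underlying complex tori, an isomorphism of the finite Weyl groups, a matching of reduced root systems, equality of the label functions (up to the choice of $\vec v$), and a compatible identification of the $R$-groups together with the $2$-cocycles. Once all these are in place, the Bernstein--Lusztig--Zelevinsky relations force a unique algebra isomorphism. The only genuinely non-formal input is the identification of the labels, which we handle by reducing to the adjoint case and invoking Lusztig.

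First I would match the commutative parts. On the Galois side the maximal commutative subalgebra of $\mc H (\mf s^\vee_\psi,\vec v)$ is $\mc O(\mf s^\vee_{L,\psi})$. On the group side, Theorem \ref{thm:3.1}(b) identifies the maximal commutative subalgebra of $\mc H (G,\hat P_{\mf f},\hat\sigma\otimes\psi)$ with $\C[X_{\mf f}]=\mc O(\Irr(X_{\mf f}))$, and Theorem \ref{thm:3.1}(c) combined with Proposition \ref{prop:4.2}(a) produces a homeomorphism $\Irr(X_{\mf f})\cong\Irr(L_{\mf f})_{\mf s_{L,\psi}}\cong \mf s^\vee_{L,\psi}$. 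Lemma \ref{lem:3.3}(b) then identifies $W^\circ(J,\sigma)$ with $W_{\mf s_\psi}$, and Proposition \ref{prop:4.2}(b) identifies $W_{\mf s_\psi}$ with $W_{\mf s^\vee_\psi}$; by the equivariance statements in the same results the bijection of tori is equivariant under this isomorphism of Weyl groups. The quotient $\Omega_{\mf f}/\Omega_{\mf f,\tor}$ which extends $W_\af(J,\sigma)$ in \eqref{eq:3.16} and the $R$-group $\mf R_{\mf s^\vee_\psi}$ which extends $W^\circ_{\mf s^\vee_\psi}$ both act as diagram automorphisms of the matched based root datum, so they correspond under the same bijection, and the $2$-cocycle $\natural$ from \cite[Proposition 3.13.b]{AMS3} is transported accordingly.

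For the root systems and labels I would reduce to the adjoint case. Lemma \ref{lem:4.3} shows that $W^\circ(J,\sigma), R_{\mf f}, R_{\mf f}^\vee$ and the parameter function $q_K^{\mc N}$ are unchanged under passage to $(\mc G_\ad,P_{\mf f,\ad},\sigma_\ad)$, and Lemma \ref{lem:4.4} shows the same for $W_{\mf s^\vee_\psi}, \Phi_{\mf s^\vee_\psi}, \lambda, \lambda^*$; only the lattices (i.e.\ the tori) change, via the explicit map \eqref{eq:4.14} and the embedding $X_{\mf f}\hookrightarrow Z_G(S)/Z_G(S)_\cpt$ from \eqref{eq:3.24}. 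Decomposing along simple factors (central tori contribute trivial unipotent representations and affect only the lattices), the adjoint comparison reduces to the case of simple adjoint $K$-groups. There the classification theorems \cite[Theorem 6.3]{LusUni1} and \cite[Theorem 10.11]{LusUni2}, together with the root-datum presentation of \cite[\S 4, \S 5.12]{LusUni1} and \cite[\S 8.2]{LusUni2}, establish precisely that $\mc H_\af(G_\ad,P_{\mf f,\ad},\sigma_\ad)$ coincides with a geometric affine Hecke algebra attached to the appropriate cuspidal local system on a unipotent class in a Levi of ${G^\vee}_\Sc$; this geometric algebra is exactly the one extracted from $\mf s^\vee_{\psi,\ad}$ in the construction of \cite{AMS3}. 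Under this identification the roots match by construction and the label $\mc N(\alpha)$ is identified with the appropriate combination of $\lambda(\alpha),\lambda^*(\alpha)$.

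Finally, the determination of $\vec v$ and the uniqueness of the isomorphism are bookkeeping. Once the bijection of simple roots is fixed, the requirement $v_j^{\lambda(\alpha)}=v_j^{\lambda^*(\alpha)\;\text{adj.}}=q_K^{\mc N(\alpha)}$ on each orbit of irreducible components of $\Phi_{\mf s^\vee_\psi}$ determines each entry $v_j$ uniquely (the exponents are positive integers and $q_K>1$). With $\vec v$ pinned down the two Bernstein presentations become literally the same, giving a unique algebra isomorphism. \emph{The hard part} is the step that hides inside ``invoke Lusztig": matching $q_K^{\mc N}$ with $\vec v^{\lambda}, \vec v^{\lambda^*}$ is not formal but rests on the case-by-case classification in \cite{LusUni1,LusUni2}; all the other matchings reduce to the results already assembled in Sections \ref{sec:Levi}--\ref{sec:3} and in \cite{AMS3}.
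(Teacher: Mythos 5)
Your overall route is the same as the paper's: identify the commutative subalgebras via Theorem \ref{thm:3.1} and Theorem \ref{thm:4.1}, the finite Weyl groups via Lemma \ref{lem:3.3}.b and Proposition \ref{prop:4.2}.b, use Lemmas \ref{lem:4.3} and \ref{lem:4.4} to pass to $\mc G_\ad$ (only the tori/lattices change), split into simple factors, and let Lusztig's classification carry the non-formal matching of $q_K^{\mc N}$ with $\vec v^{\lambda}, \vec v^{\lambda^*}$.

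There is, however, one genuine missing step: your reduction stops at ``simple adjoint $K$-groups'', but a simple adjoint $K$-group that splits over an unramified extension need not be absolutely simple --- it is $\mr{Res}_{K'/K}(\mc G')$ for an absolutely simple $\mc G'$ over a finite unramified extension $K'$, and the Lusztig input you invoke (the paper's Lemma \ref{lem:4.6}) is only available in the absolutely simple case, with parameter $q_{K'}^{1/2}$ rather than $q_K^{1/2}$. On the $p$-adic side the identification $\mc G(K)=\mc G'(K')$ costs nothing except that the parameter function is then computed as $q_{K'}^{\mc N'}$; but on the Galois side the dual group becomes the induced group $\mr{ind}_{\mb W_{K'}}^{\mb W_K}(\mc G'^\vee)$, and one needs Shapiro's lemma together with \eqref{eq:2.5}--\eqref{eq:2.8} to match Bernstein components of enhanced L-parameters and, crucially, the paper's Lemma \ref{lem:2.2} to see that the whole construction of $\mc H(\mf s^\vee_\psi,\vec v)$ (root system, labels, torus) is unchanged under this Weil restriction. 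Without this step your appeal to \cite[Theorem 6.3]{LusUni1} and \cite[Theorem 10.11]{LusUni2} does not cover all simple factors, and the determination of $\vec v$ would also be stated with the wrong base $q$. Adding the Weil-restriction reduction (as in the last paragraph of the paper's proof) closes the gap; the rest of your argument, including the bookkeeping that pins down $\vec v$ and the uniqueness of the isomorphism from the matched based root data, is in line with the paper.
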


\begin{lem}\label{lem:4.6}
Theorem \ref{thm:4.5} holds if $\mc G$ is absolutely simple and adjoint. Here $\vec{v}$ is
a single number, namely $q_K^{1/2} = |k|^{1/2}$.
\end{lem}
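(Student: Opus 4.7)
The plan is to reduce to Lusztig's classification of unipotent representations of absolutely simple adjoint $p$-adic groups in \cite{LusUni1, LusUni2}, combined with the FOS bijection on the cuspidal level (Theorem \ref{thm:4.1}) and the matching of root-theoretic data from Lemmas \ref{lem:4.3} and \ref{lem:4.4}.

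First I would unpack both algebras into their explicit presentations. By Lemma \ref{lem:4.3} and \eqref{eq:3.16}, $\mc H (G, \hat P_{\mf f}, \hat\sigma \otimes \psi)$ is the extended affine Hecke algebra with lattice $X_{\mf f}$ (which in the adjoint case equals the weight lattice $\mc L$ from \cite[\S 2]{LusUni1}), finite Weyl group $W^\circ (J,\sigma)$, root datum $(R_{\mf f}, R_{\mf f}^\vee)$, parameter function $q_K^{\mc N}$ on $S_{\mf f, \af}$, and extension by $\Omega_{\mf f}/\Omega_{\mf f, \tor}$. On the Galois side, $\mc H (\mf s^\vee_\psi, \vec v)$ has the parallel presentation \eqref{eq:2.4} in terms of $\mf s^\vee_L$, $W^\circ_{\mf s^\vee}$, $\Phi_{\mf s^\vee}$, labels $\lambda, \lambda^*$, and twisted extension by $\mf R_{\mf s^\vee_\psi}$.

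Second, I would identify the presentations via the FOS correspondence and Lusztig's theorems. Theorem \ref{thm:4.1} pairs $\sigma$ with the cuspidal L-parameter $(\phi_L, \rho_L)$ for the Levi $L_{\mf f}$, and \cite[Theorem 6.3]{LusUni1} together with \cite[Theorem 10.11]{LusUni2} identifies $\mc H_\af (G, P_{\mf f}, \sigma)$ with a geometric affine Hecke algebra built from a complex reductive group $H$, a Levi subgroup, and a cuspidal pair on it. In the language used here, $H$ is $(G^\vee_{\phi_L})^\circ$, the Levi is $Z_{L_c^\vee}(\phi_L|_{\mb W_K})^\circ$, and the cuspidal pair is $(u_{\phi_L}, \rho_L)$. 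These are exactly the ingredients from which \cite[\S 3.2]{AMS3} constructs $\mc H (\mf s^\vee_\psi, \vec v)$ via \eqref{eq:2.3}. Combined with Lemmas \ref{lem:3.3}.b, \ref{lem:4.4}, and the adjoint-case equality $X_{\mf f} = \mc L$, this yields a canonical isomorphism of based root data.

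Third, and this is the main technical point, I would match the parameters. The labels $\lambda (\alpha), \lambda^* (\alpha)$ of \cite[\S 3.2]{AMS3} are defined via the adjoint action of $\log(u_{\phi_L})$ on $\mr{Lie}(G^\vee_{t\phi_L})^\circ$ following the graded-Hecke-algebra recipe of \cite[Propositions 2.8, 2.10, 2.12]{LusCusp}, while Lusztig computes the exponents $\mc N (\alpha)$ on the arithmetic side in \cite[\S 2, \S 5]{LusUni1} and \cite[\S 7--8]{LusUni2} by precisely the analogous procedure, starting from the same geometric data. Hence $\lambda (\alpha) = \mc N (\alpha)$ (and similarly for $\lambda^*$), so that $\vec v = q_K^{1/2}$ gives $\vec v^{2\lambda (\alpha)} = q_K^{\mc N (\alpha)}$, matching the parameter function of $\mc H_\af (G, P_{\mf f}, \sigma)$. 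The R-group extension splits in the absolutely simple adjoint case by \cite[\S 1.20]{LusUni1}, so the cocycle $\natural$ is trivial and $\Omega_{\mf f}/\Omega_{\mf f, \tor} \cong \mf R_{\mf s^\vee_\psi}$ via Lemma \ref{lem:4.4}. The hardest part is precisely this verification $\lambda = \mc N$: Lusztig's exponent computations are case-by-case across the classification of absolutely simple adjoint types (including the ramified cases treated in \cite{LusUni2}), and translating his normalizations to those of \cite{AMS3} requires careful type-by-type bookkeeping.
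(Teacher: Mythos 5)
Your overall strategy (reduce to Lusztig's classification plus the cuspidal-level bijection of Theorem \ref{thm:4.1}) is the paper's, but the way you handle the crucial step is both mis-stated and left undone. You declare the ``hardest part'' to be a direct, type-by-type verification that $\lambda(\alpha)=\mc N(\alpha)$, so that $\vec v^{\,2\lambda(\alpha)}=q_K^{\mc N(\alpha)}$ at $\vec v = q_K^{1/2}$. First, that identity is not the right relation: $q_K^{\mc N}$ is a parameter function on the affine simple reflections in the Iwahori--Matsumoto presentation, while $\lambda,\lambda^*$ are labels on the finite roots in the Bernstein presentation; they are related by the presentation change of \cite[\S 5.12]{LusUni1}, and $\lambda^*$ enters nontrivially exactly when the two affine parameters in a given direction differ (which does happen for unipotent Hecke algebras), so a blanket $\lambda=\mc N$ would give wrong parameters in those cases. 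Second, no case-by-case bookkeeping is needed at all: the point of the actual proof is that for the data $G^\vee_{\phi_L},\, L_c^\vee,\, \log(u_{\phi_L}),\, \rho_L$ the construction of $\mc H(\mf s^\vee_\psi,\vec v)$ in \cite{AMS3} \emph{is} Lusztig's geometric algebra $\mb H(G,G_J,\mc C,\mc F)$ (resp. $\mb H(G\theta,G_J,\mc C,\mc F)$), and Lusztig's Theorems 6.3 of \cite{LusUni1} and 10.11 of \cite{LusUni2} already assert that this geometric algebra and the arithmetic algebra $\mc H(G,\hat P_{\mf f},\hat\sigma\otimes\psi)$ have the same Iwahori--Matsumoto presentation. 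Redoing the exponent comparison would amount to reproving part of Lusztig's theorem; the paper only has to translate his statement into the Bernstein presentation, which is where the relation between $q_K^{\mc N}$ and $q_K^{\lambda/2}, q_K^{\lambda^*/2}$ comes from.

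There is a second gap: for Lusztig's theorems to identify your arithmetic algebra with the geometric algebra attached to \emph{this} $(\phi_L,\rho_L)$ --- the one assigned to $\sigma$ by Theorem \ref{thm:4.1} --- you must know that Lusztig's matching of Bernstein components agrees with the one of Proposition \ref{prop:4.2}.a. You simply assert ``$H$ is $(G^\vee_{\phi_L})^\circ$'' without justification; the paper secures this by noting that the bijection of \cite{FOS} comes from \cite{FeOp} and agrees with Lusztig's parametrization of supercuspidal unipotent representations. Two smaller points: the identification $\Omega_{\mf f}/\Omega_{\mf f,\tor}\cong \mf R_{\mf s^\vee_\psi}$ is not correct (by Proposition \ref{prop:3.2} that quotient is absorbed into the lattice $X_{\mf f}$, while in this situation $W_{\mf s^\vee_\psi}=W(\Phi_{\mf s^\vee_\psi})$ as in \eqref{eq:4.11}, so $\mf R_{\mf s^\vee_\psi}$ is trivial and no cocycle arises), and Theorem \ref{thm:4.5} demands a \emph{unique} isomorphism coming from a canonical isomorphism of based root data, so you still need the argument that \eqref{eq:4.8} is canonical even though Theorem \ref{thm:4.1} is only canonical up to weakly unramified twists.
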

\begin{proof}
We are in the setting of \cite{LusUni1,LusUni2}. Our affine Hecke algebra 
$\mc H (\mf s_\psi^\vee,\vec{v})$ can be identified with $\mb H (G,G_J,\mc C,\mc F)$ (from 
\cite[\S 5.17]{LusUni1}, when $\mc G$ is an inner form of a $K$-split group) or more generally 
with $\mb H (G\theta,G_J,\mc C, \mc F)$ from \cite[\S 8.2]{LusUni2}. 

To match Lusztig's notations with ours, we must take $G = G^\vee_{\phi_L} , G_J = L_c^\vee ,
\mc C$ the adjoint orbit of $\log (u_{\phi_L})$ and $\mc F$ the cuspidal local system on
$\mc C$ determined by $\rho_L$. Then the construction of $\mc H (\mf s^\vee_\psi, \vec{v})$
in \cite{AMS1,AMS2,AMS3} boils down to the relevant parts of \cite{LusUni1,LusUni2}.
(In fact this was a starting point of the work of Aubert--Moussaoui--Solleveld.)

In \cite[Theorem 6.3]{LusUni1} and \cite[Theorem 10.11]{LusUni2} Lusztig exhibited, in
particular, a matching between Bernstein components for $\Irr_\unip (G)$ and for
$\Phi_{\nr,e}(G)$. The bijection in Theorem \ref{thm:4.1} comes from \cite{FeOp} and agrees
with Lusztig's parametrization of supercuspidal unipotent representations. Hence Lusztig's
matching of Bernstein components is the same as in Proposition \ref{prop:4.2}.a.

As explained in the proofs of \cite[Theorem 6.3]{LusUni1} and \cite[Theorem 10.11]{LusUni2},
this matching is such that the corresponding affine Hecke algebras on both sides have the same
Iwahori--Matsumoto presentation. Let us make this more explicit. We can reformulate it by 
saying that $\mc H (G, \hat P_{\mf f}, \hat \sigma \otimes \psi)$
and $\mb H (G\theta,G_J,\mc C, \mc F) = \mc H (\mf s^\vee_\psi, \vec{v})$ have the same
Bernstein presentation. In particular the root data
\begin{equation}\label{eq:4.8}
(X_{\mf f}, R_{\mf f}, X_{\mf f}^\vee, R_{\mf f}^\vee) \quad \text{and} \quad 
\mc R_{\mf s^\vee} = \big( X^* (T_{\mf s^\vee_\psi}), \Phi_{\mf s^\vee_\psi}, 
X_* (T_{\mf s^\vee_\psi}), \Phi_{\mf s^\vee_\psi}^\vee \big)
\end{equation}
are isomorphic. This isomorphism of root data is induced by the $W_{\mf s_\psi}$-equivariant 
bijection $\mf s_{L,\psi} \to \mf s^\vee_{L,\psi}$ from Theorem \ref{thm:4.1}. (The choices 
of basepoints are not needed for this, since an adjustment of a basepoint only multiplies a 
(co)character by a complex number, and that still allows us to detect the same maps between 
(co)character lattices.) Although Theorem \ref{thm:4.1} is only canonical up to twists by
weakly unramified characters, the isomorphism \eqref{eq:4.8} is entirely canonical, for
weakly unramified twists also just multiply (co)characters by nonzero scalars. 
(Such weakly unramified twists may move things to another Bernstein component, but the
Hecke algebra for the new one is canonically identified with the original Hecke algebra.)

Theorem \ref{thm:3.1} entails that there is a unique algebra algebra isomorphism 
\begin{equation}\label{eq:4.12}
\mc H (L_{\mf f}, \hat P_{L,\mf f}, (\hat \sigma \otimes \psi)|_{\hat P_{L,\mf f}} ) 
\cong \mc O (\mf s_{L,\psi})
\end{equation}
such that Theorem \ref{thm:3.4}.b for $\Irr (L_{\mf f})_{\mf s_{L,\psi}}$ just sends
$\pi_L$ to the character of $\mc O (\mf s_{L,\psi})$ given by evaluation at
$(L,\pi_L)$. By Proposition \ref{prop:3.2}, Theorem \ref{thm:3.1}.b, \eqref{eq:3.16}
and \eqref{eq:4.12} the multiplication map
\[
\mc O (\mf s_{L,\psi}) \otimes \mc H (W_{\mf s_\psi}, q_K^{\mc N}) \to
\mc H (G, \hat P_{\mf f}, \hat \sigma \otimes \psi) 
\]
is an isomorphism of vector spaces. Similarly, the Bernstein presentation following
\eqref{eq:2.4} entails that the multiplication map
\[
\mc O (\mf s^\vee_{L,\psi}) \otimes \mc H (W_{\mf s^\vee_\psi}, \vec{v}^{\, 2 \lambda} ) 
\to \mc H (\mf s_\psi^\vee, \vec{v}) 
\]
is a linear bijection. 
Theorem \ref{thm:4.1} also induces an algebra isomorphism
\begin{equation}\label{eq:4.9}
\mc O (\mf s^\vee_{L,\psi}) \to \mc O (\mf s_{L,\psi}) ,
\end{equation}
and Proposition \ref{prop:4.2}.b gives rise to a linear bijection
\begin{equation}\label{eq:4.10}
\mc H (W_{\mf s_\psi}, q_K^{\mc N}) \to \mc H (W_{\mf s^\vee_\psi}, \vec{v}^{\, 2 \lambda} ) ,
\end{equation}
which sends a basis element $T_w$ to a basis element $T_{w^\vee}$.
The maps \eqref{eq:4.9} and \eqref{eq:4.10} will combine to an isomorphism between affine
Hecke algebras, once we make the remaining choices appropriately.

We note that by \eqref{eq:4.8} and Proposition \ref{prop:4.2}.b 
\begin{equation}\label{eq:4.11}
W_{\mf s^\vee_\psi} \cong W_{\mf s_\psi} \cong W(R_{\mf f}) \cong W(\Phi_{\mf s_\psi^\vee}) .
\end{equation}
The set of simple roots $\Delta_{\mf f}$ of $R_{\mf f}$ determines a (unique) basis 
$\Delta_{\mf s_\psi^\vee}$ of $\Phi_{\mf s_\psi^\vee}$ such that \eqref{eq:4.8} becomes
an isomorphism of based root data. As $W_{\mf s^\vee_\psi} = W(\Phi_{\mf s_\psi^\vee})$,
we still had complete freedom to choose a basis for $\mc R_{\mf s^\vee}$ in \eqref{eq:2.4}.

Since $\mc G$ is simple, so is $G^\vee_{\phi_L}$, and $\Phi_{\mf s_\psi^\vee}$ is 
irreducible \cite[\S 2.13]{LusCusp}. Hence the array of parameters $\vec{v}$ reduces to
a single complex number $v$, and we may write $\mc H (\mf s_\psi^\vee,v)$ for
$\mc H (\mf s_\psi^\vee,\vec{v})$. Lusztig showed that for $v = |k|^{1/2} = q_K^{1/2}$
the isomorphisms \eqref{eq:4.8}, \eqref{eq:4.9} and \eqref{eq:4.10} combine to an 
algebra isomorphism
\begin{equation}
\mc H (G, \hat P_{\mf f}, \hat \sigma \otimes \psi) \cong \mc H (\mf s_\psi^\vee, q_K^{1/2}) .
\end{equation}
Notice that on the left hand side we have the parameters $q_K^{\mc N (\alpha)}$ for
$\mc H (G, \hat P_{\mf f}, \hat \sigma \otimes \psi)$ in the Iwahori--Matsumoto presentation,
as given in \cite[\S 1.18]{LusUni1}, whereas on the right hand side we have the parameters
$q_K^{\lambda (\alpha)/2}, q_K^{\lambda^* (\alpha) / 2}$ for $\mc H (\mf s_\psi^\vee, q_K^{1/2})$
in the Bernstein presentation. Transforming one presentation into the other, as in
\cite[\S 5.12]{LusUni1}, yields the required relations between the parameters on both sides.
\end{proof}

\emph{Proof of Theorem \ref{thm:4.5}}\\
The bijection $\mf s_{L,\psi} \to \mf s_{L,\psi}^\vee$ from Theorem \ref{thm:4.1} gives 
an algebra isomorphism $\mc O (\mf s^\vee_{L,\psi}) \to \mc O (\mf s_{L,\psi})$.
From \eqref{eq:4.6} and \eqref{eq:4.7} we know that, when passing to the adjoint group
$\mc G_\ad$, the presentations of the affine Hecke algebras only change in the tori and the
lattices. Therefore we may assume that $\mc G$ is adjoint.

Such a $\mc G$ is a direct product of simple, adjoint $K$-groups, and all objects under
consideration factor accordingly. Thus, we may even assume that $\mc G$ is a simple adjoint
$K$-group. 

Then it is the restriction of scalars of an absolutely (i.e. over $\overline K$) simple
$K'$-group $\mc G'$, for a finite unramified extension $K' / K$. On the $p$-adic side the 
identification $\mc G (K) = \mc G' (K')$ does not change the Hecke algebra of the type.
We should, however, note that the parameter function $q_K^{\mc N}$ for
$\mc H_\af (G, P_{\mf f}, \sigma)$ is now computed as $q_{K'}^{\mc N'}$, where $q_{K'}$
is the cardinality of the residue field of $K'$.

On the Galois side Lemma \ref{lem:2.2} says that $\mc H (\mf s_\psi^\vee, \vec{v})$
is invariant under the Weil restriction $\mc G = \mr{Res}_{K' / K} \mc G'$. Thus we
reduced Theorem \ref{thm:4.5} to Lemma \ref{lem:4.6}. $\qquad \Box$

\section{A local Langlands correspondence}
\label{sec:LLC}

Recall that $\mc G$ is a connected reductive $K$-group, which splits over an unramified 
extension of $K$. As in Section \ref{sec:2}, we consider $\mc G$ as an inner twist 
of a quasi-split $K$-group. Let $\mf s_\psi$ be a unipotent inertial equivalence
class for $\Irr (G)$, as in Section \ref{sec:3}. It is associated to a parahoric
subgroup $P_{\mf f}$, a cuspidal unipotent representation $\sigma$ of $P_{\mf f}$
and an extension $\hat \sigma \otimes \psi$ of $\sigma$ to $\hat P_{\mf f}$.
Moreover $\mf s_\psi$ comes from a cuspidal inertial class $\mf s_{L,\psi}$ for 
a Levi subgroup $L = L_{\mf f}$ of $G = \mc G (K)$. By Proposition \ref{prop:4.2}.a
$\mf s_\psi$ gives rise to an inertial equivalence class $\mf s_\psi^\vee$ of 
enhanced Langlands parameters for $G$.

Theorems \ref{thm:3.4}.b, \ref{thm:4.5} and \ref{thm:2.1} yield bijections
\begin{equation}\label{eq:5.1}
\Irr (G)_{\mf s_\psi} \longrightarrow \Irr \big( \mc H (G, \hat P_{\mf f}, \hat \sigma
\otimes \psi) \big) \longrightarrow \Irr \big( \mc H (\mf s_\psi^\vee, \vec{v}) \big) 
\longrightarrow \Phi_e (G)^{\mf s_\psi^\vee} .
\end{equation}
We note that the third map is canonical and that the second map is canonical
up to certain twists by weakly ramified characters. However, it is unclear how canonical
the first map in \eqref{eq:5.1} is, for $\Rep (G)_{\mf s_\psi}$ may admit several
different types. We will see later that, if we forget the enhancements of the Langlands
parameters at the right hand side of \eqref{eq:5.1}, the map becomes canonical up
to twists by weakly ramified characters of $G$.

\begin{thm}\label{thm:5.1}
The maps \eqref{eq:5.1} combine to a bijection
\[
\begin{array}{ccc}
\Irr (G)_\unip & \longrightarrow & \Phi_{\nr,e}(G) \\
\pi & \mapsto & (\phi_\pi, \rho_\pi)
\end{array}.
\]
\end{thm}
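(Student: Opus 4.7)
The plan is to glue the local bijections \eqref{eq:5.1}, one for each unipotent inertial class $\mf s_\psi \in \mf{Be}(G)_\unip$, into a single global bijection. This reduces to verifying that both sides partition cleanly into pieces indexed by inertial equivalence classes, and that these indexing sets match under the bijection of Proposition \ref{prop:4.2}.a.

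On the representation-theoretic side, I would combine the equivalence of categories \eqref{eq:3.32} with the block decomposition \eqref{eq:3.15}: the former writes $\Rep_\unip (G)$ as the product, over $G$-conjugacy classes of pairs $(P_{\mf f},\sigma)$, of the categories $\Rep (G)_{(P_{\mf f},\sigma)}$, and the latter refines each of these into the finitely many Bernstein blocks $\Rep (G)_{\mf s_\psi}$ indexed by characters $\psi$ of $\Omega_{\mf f,\tor}$. Using \eqref{eq:3.30} to rule out double-counting, this yields
\[
\Irr_\unip (G) \;=\; \bigsqcup_{(P_{\mf f},\hat\sigma \otimes \psi)/G} \Irr (G)_{\mf s_\psi},
\]
so every $\mf s \in \mf{Be}(G)_\unip$ arises as some $\mf s_\psi$, unique up to $G$-conjugacy. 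On the Galois side the partition \eqref{eq:2.10} restricts to
\[
\Phi_{\nr,e}(G) \;=\; \bigsqcup_{\mf s^\vee \in \mf{Be}^\vee (G)_\nr} \Phi_e (G)^{\mf s^\vee},
\]
and Proposition \ref{prop:4.2}.a supplies a bijection $\mf s_\psi \mapsto \mf s_\psi^\vee$ between the two indexing sets.

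For each matched pair $(\mf s_\psi, \mf s_\psi^\vee)$, the three maps in \eqref{eq:5.1} are bijective by Theorem \ref{thm:3.4}.b, Theorem \ref{thm:4.5}, and Theorem \ref{thm:2.1} respectively; composing them and taking the disjoint union over all pairs produces the stated map $\pi \mapsto (\phi_\pi,\rho_\pi)$. The only subtlety that I would verify explicitly is that the array $\vec v$ supplied by Theorem \ref{thm:4.5}---determined by the matching $\vec v^{\lambda} = \vec v^{\lambda^*} = q_K^{\mc N}$ with the positive Iwahori parameters of $\mc H_\af (G,P_{\mf f},\sigma)$---lies in $\R_{>1}$, as required for applying Theorem \ref{thm:2.1}. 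Since $q_K > 1$ and, in the absolutely simple adjoint reduction of Lemma \ref{lem:4.6}, the parameter is $v = q_K^{1/2} > 1$, this is automatic after passing through Weil restriction (Lemma \ref{lem:2.2}) and the adjoint comparison \eqref{eq:4.7}. No genuine obstacle remains; the content of Theorem \ref{thm:5.1} is the assembly of the earlier Bernstein decompositions on both sides with the Hecke-algebra isomorphism of Theorem \ref{thm:4.5}.
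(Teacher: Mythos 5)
Your proposal is correct and follows essentially the same route as the paper: decompose $\Irr_\unip(G)$ and $\Phi_{\nr,e}(G)$ into Bernstein components, match the indexing sets via Proposition \ref{prop:4.2}.a, and glue the component-wise bijections \eqref{eq:5.1}. Your extra check that $\vec v$ lies in $\R_{>1}$ (via Lemma \ref{lem:4.6} and Weil restriction) is a sensible precaution for invoking Theorem \ref{thm:2.1}, which the paper leaves implicit at this point and only records later in the proof of Lemma \ref{lem:5.16}.
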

\begin{proof}
Recall from \cite{BeDe} and \eqref{eq:2.10} that 
\[
\Irr (G)_\unip = \bigsqcup_{\mf s \in \mf{Be}(G)_\unip} \Irr (G)_{\mf s} 
\quad \text{ and } \quad \Phi_{\nr,e}(G) = 
\bigsqcup_{\mf s^\vee \in \mf{Be}^\vee (G)_\nr} \Phi_e (G)^{\mf s^\vee} .
\]
In Proposition \ref{prop:4.2}.a we found a bijection $\mf{Be}(G)_\unip \leftrightarrow
\mf{Be}^\vee (G)_\nr$. Combine this with \eqref{eq:5.1}.
\end{proof}

We check that the bijection in Theorem \ref{thm:5.1} satisfies many properties which 
are expected for a local Langlands correspondence.

\begin{lem}\label{lem:5.6}
Theorem \ref{thm:5.1} is compatible with direct products of reductive $K$-groups.
\end{lem}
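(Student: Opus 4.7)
The plan is to check that each of the three bijections composing \eqref{eq:5.1} is compatible with direct products $\mc G = \mc G_1 \times \mc G_2$, and then to patch them together. On the $p$-adic side, the enlarged building decomposes as $\mc B (\mc G,K) = \mc B (\mc G_1,K) \times \mc B (\mc G_2,K)$, so every facet $\mf f$ of $\mc B (\mc G,K)$ is a product $\mf f_1 \times \mf f_2$, and both $P_{\mf f}$ and $\hat P_{\mf f}$ factor accordingly. The cuspidal unipotent representations of $\overline{P_{\mf f}} = \overline{P_{\mf f_1}} \times \overline{P_{\mf f_2}}$ are precisely the external tensor products $\overline\sigma_1 \boxtimes \overline\sigma_2$, and analogously for their extensions to $\hat P_{\mf f}$. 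Hence
\[
\mc H (G, \hat P_{\mf f}, \hat \sigma \otimes \psi) \;\cong\;
\mc H (G_1, \hat P_{\mf f_1}, \hat \sigma_1 \otimes \psi_1) \otimes
\mc H (G_2, \hat P_{\mf f_2}, \hat \sigma_2 \otimes \psi_2),
\]
and the equivalence of categories in Theorem \ref{thm:3.4}(b) clearly respects this tensor decomposition.

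On the Galois side, $G^\vee = G_1^\vee \times G_2^\vee$, so every L-parameter factors as $\phi = (\phi_1, \phi_2)$, and the S-group splits as $\mc S_\phi = \mc S_{\phi_1} \times \mc S_{\phi_2}$. Hence enhanced L-parameters and their cuspidality factor through this product, which yields $\Phi_{\nr,e}(G) = \Phi_{\nr,e}(G_1) \times \Phi_{\nr,e}(G_2)$ and $\mf{Be}^\vee (G)_\nr = \mf{Be}^\vee (G_1)_\nr \times \mf{Be}^\vee (G_2)_\nr$. That the bijection $\mf{Be}(G)_\unip \to \mf{Be}^\vee (G)_\nr$ of Proposition \ref{prop:4.2}(a) is product-compatible follows from property (ii) of Theorem \ref{thm:4.1} together with the fact that the cuspidal support map, taken on both sides, factors over direct products (on the $G$-side by the theory of \cite{BeDe}, on the Galois side by \cite{AMS1}).

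Next, I would verify that the construction of $\mc H (\mf s^\vee, \vec{v})$ from Section \ref{sec:2} respects direct products: when $\mf s^\vee = \mf s_1^\vee \times \mf s_2^\vee$, the pointed complex groups $G^\vee_{t \phi_L}$ entering \eqref{eq:2.3} decompose as direct products, and so do the root system $\Phi_{\mf s^\vee}$, the labels $\lambda, \lambda^*$, the torus $T_{\mf s^\vee}$, the R-group $\mf R_{\mf s^\vee}$ and its $2$-cocycle $\natural$. Thus $\mc H (\mf s^\vee, \vec v) \cong \mc H (\mf s_1^\vee, \vec v_1) \otimes \mc H (\mf s_2^\vee, \vec v_2)$, and the parametrization of irreducible modules in Theorem \ref{thm:2.1} is compatible with this tensor product because the standard modules $\bar E(\phi,\rho,\vec v)$ of \cite{AMS3} are constructed from geometric data on $G^\vee_{\phi_L}$ which manifestly factor. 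Similarly, Theorem \ref{thm:4.5} provides a canonical isomorphism on both factors; as its statement is determined by the underlying based root data together with the parameter function $q_K^{\mc N}$, each of which decomposes as a product, the isomorphism itself splits as a tensor product.

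Combining all of the above, the composition \eqref{eq:5.1} for $\mc G$ identifies, tensor factor by tensor factor, with the product of the same compositions for $\mc G_1$ and $\mc G_2$, giving the claimed compatibility. The main subtlety I expect will be bookkeeping in step three: one has to check that the choice of basepoint $\phi_L$ of $\mf s_L^\vee$ (needed to identify $\mf s_L^\vee$ with $T_{\mf s_L^\vee}$ and to set up the construction of \cite{AMS3}) can be made compatibly on the two factors, and that the resulting isomorphism of Hecke algebras is insensitive to this choice. This is however straightforward since all such choices only differ by elements of $X_\nr (L_1) \times X_\nr (L_2)$, and the algebra $\mc H (\mf s^\vee, \vec v)$ depends on $\mf s^\vee$ and $\vec v$ alone up to isomorphism by \cite[Lemma 3.14]{AMS3}.
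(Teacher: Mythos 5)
Your proposal is correct and follows essentially the same route as the paper: the paper's own proof simply observes that all objects on both sides (representations, enhanced L-parameters, Bernstein components, Hecke algebras) factor naturally over a direct product $\mc G_1 \times \mc G_2$ and that every construction in the chain \eqref{eq:5.1} preserves these factorizations. You have merely spelled out, step by step, what the paper leaves implicit, so no further comment is needed.
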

\begin{proof}
Suppose that $\mc G = \mc G_1 \times \mc G_2$ as $K$-groups. Then all involved objects 
for $\mc G$ factorize naturally are direct products of the analogous objects for 
$\mc G_1$ and $\mc G_2$, for example 
\begin{equation*}
\Phi (G_1 \times G_2) = H^1 (\mb W_K, G_1^\vee \times G_2^\vee) \cong
H^1 (\mb W_K, G_1^\vee) \times H^1 (\mb W_K, G_2^\vee) = \Phi (G_1) \times \Phi (G_2).
\end{equation*}
Our constructions preserve these factorizations, that is implicit in all arguments.
In particular $\pi_1 \otimes \pi_2 \in \Irr_\unip (G_1 \times G_2)$ is mapped to 
$(\phi_{\pi_1} \times \phi_{\pi_2}, \rho_{\pi_1} \otimes \rho_{\pi_2}) 
\in \Phi_{\nr,e} (G_1 \times G_2)$.
\end{proof}

\begin{lem}\label{lem:5.11}
The bijection in Theorem \ref{thm:5.1} is equivariant with respect to the natural
actions of $X_\Wr (G)$.
\end{lem}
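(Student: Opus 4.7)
The plan is to factor the bijection of Theorem \ref{thm:5.1} into the three arrows of \eqref{eq:5.1} and establish equivariance at each stage, then compose. Throughout, I identify $\chi \in X_\Wr(G)$ with its image $z_\chi \in (Z(G^\vee)^{\mb I_K})_\Fr \subset H^1(\mb W_K,Z(G^\vee))$ via \eqref{eq:4.15}.

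\emph{Step 1 (the rightmost arrow).} For the bijection $\Irr(\mc H(\mf s^\vee,\vec v)) \to \Phi_e(G)^{\mf s^\vee}$ of Theorem \ref{thm:2.1}, equivariance is already contained in Lemma \ref{lem:2.3}(a): the action of $z_\chi$ on $\Phi_e(G)^{\mf s_\psi^\vee}$ (through \eqref{eq:2.9}, which does nothing to enhancements) corresponds via the algebra isomorphism $\mc H(z_\chi)$ of \eqref{eq:2.12} to the functor $z_\chi \otimes$ between module categories of $\mc H(\mf s_\psi^\vee,\vec v)$ and $\mc H(z_\chi \mf s_\psi^\vee,\vec v)$. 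Thus this step is $X_\Wr(G)$-equivariant for free.

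\emph{Step 2 (the middle arrow).} One must verify that the family of Hecke algebra isomorphisms in Theorem \ref{thm:4.5}, as $\mf s_\psi$ varies over a $X_\Wr(G)$-orbit, intertwines the actions of $z_\chi$. These isomorphisms are built from three ingredients: the bijection $\mf s_{L,\psi}\to \mf s_{L,\psi}^\vee$ of Theorem \ref{thm:4.1}, the isomorphism $W_{\mf s_\psi}\cong W_{\mf s_\psi^\vee}$ of Proposition \ref{prop:4.2}(b), and the matching of root-system labels. Theorem \ref{thm:4.1}(i) gives $X_\Wr(L)$-equivariance at the cuspidal Levi stage, and the embedding $X_\Wr(G) \hookrightarrow X_\Wr(L)$ propagates this. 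The Weyl group and the labels $\lambda,\lambda^*$ are unaffected by central twists (cf.\ the discussion preceding Lemma \ref{lem:2.3}), so they match across the $X_\Wr(G)$-orbit. Hence this step is equivariant as well.

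\emph{Step 3 (the leftmost arrow).} Here I use the decisive point that a weakly unramified character is trivial on every parahoric: indeed $\chi$ factors through the Kottwitz homomorphism $G\to \Omega$ whose kernel contains each $P_{\mf f}$ (cf.\ \eqref{eq:3.10}). Therefore $\chi|_{\hat P_{\mf f}}$ descends to a character of $\hat P_{\mf f}/P_{\mf f}=\Omega_{\mf f,\tor}$ via \eqref{eq:3.14}, call it $\chi_{\mf f}$. Tensoring by $\chi$ sends $(\hat P_{\mf f}, \hat\sigma\otimes\psi)$-isotypic vectors to $(\hat P_{\mf f}, \hat\sigma\otimes \psi\chi_{\mf f})$-isotypic ones, and identifies $\Rep(G)_{\mf s_\psi}$ with $\Rep(G)_{\mf s_{\psi\chi_{\mf f}}}$ compatibly with the equivalences in Theorem \ref{thm:3.4}(b). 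In the decomposition \eqref{eq:3.12}, tensoring by $\chi$ just permutes the direct summands by $\psi\mapsto \psi\chi_{\mf f}$. On the Galois side, Theorem \ref{thm:4.1}(i) and Proposition \ref{prop:4.2}(a) imply that the corresponding permutation of cuspidal Bernstein components $\mf s_{L,\psi}^\vee$ by $z_\chi$ agrees with $\psi\mapsto \psi\chi_{\mf f}$, so the bijection $\mf{Be}(G)_\unip \leftrightarrow \mf{Be}^\vee(G)_\nr$ of Proposition \ref{prop:4.2}(a) is $X_\Wr(G)$-equivariant; the equivariance on the level of components then combines with Steps 1--2.

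\emph{Main obstacle.} The delicate bookkeeping is in Step 3: identifying the combinatorial action $\psi\mapsto\psi\chi_{\mf f}$ on types with the action of $z_\chi$ on the $G^\vee$-orbits of cuspidal L-parameters for $L_{\mf f}$. Both actions can be continuous (inside a fixed component, corresponding to $X_\nr(G)\subset X_\Wr(G)$) or discrete (permuting components, corresponding to $X_\Wr(G)/X_\nr(G)$). In both cases one reduces, via the cuspidal support maps on the two sides, to Theorem \ref{thm:4.1}(i) applied to $L_{\mf f}$, which is precisely the $X_\Wr(L_{\mf f})$-equivariance of the cuspidal unipotent LLC. Once this matching is in place, the three steps compose to yield the $X_\Wr(G)$-equivariance asserted by the lemma.
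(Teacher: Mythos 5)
Your proposal is correct and follows essentially the same route as the paper: it factors the bijection through the three arrows of \eqref{eq:5.1} and checks equivariance arrow by arrow, using Lemma \ref{lem:2.3} for the Galois-side arrow, Theorem \ref{thm:4.1}.(i) together with the insensitivity of the Weyl groups and labels to central twists for the Hecke-algebra comparison, and the triviality of weakly unramified characters on parahoric subgroups for the representation-theoretic arrow. The only cosmetic difference is that the paper bundles all summands $\psi \in \Irr (\Omega_{\mf f,\tor})$ into the single $X_\Wr (G)$-stable algebra $\mc H (G,P_{\mf f},\sigma)$ as in \eqref{eq:5.14}, whereas you track the permutation $\psi \mapsto \psi \chi_{\mf f}$ explicitly; both come down to the same inputs.
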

\begin{proof}
First we reformulate \eqref{eq:5.1} in a $X_\Wr (G)$-stable setting. By \eqref{eq:3.12}
and \eqref{eq:3.16} 
\[
\mc H (G,P_{\mf f},\sigma) = \bigoplus\nolimits_{\psi \in \Irr (\Omega_{\mf f,tor})} 
\mc H (G, \hat P_{\mf f}, \hat \sigma \otimes \psi) .
\]
Hence the bijection from Theorem \ref{thm:5.1} can also be expressed as
\begin{multline}\label{eq:5.14}
\bigsqcup\nolimits_{\psi \in \Irr (\Omega_{\mf f,tor})} \Irr (G)_{\mf s_\psi} =
\Irr (G)_{(P_{\mf f},\sigma)} \to \Irr (\mc H (G,P_{\mf f},\sigma)) \to \\
\Irr \Big(  \bigoplus_{\psi \in \Irr (\Omega_{\mf f,tor})}
\mc H (\mf s_\psi, \vec{v}) \Big) \to \bigsqcup_{\psi \in \Irr (\Omega_{\mf f,tor})} 
\Phi_{nr,e}(G)^{\mf s^\vee_\psi} .
\end{multline}
It is clear from \eqref{eq:3.1} that the first arrow in \eqref{eq:5.14} is
$X_\Wr (G)$-equivariant. The algebra isomorphism
\begin{equation}\label{eq:5.17}
\mc H (G,P_{\mf f},\sigma) \to 
\bigoplus\nolimits_{\psi \in \Irr (\Omega_{\mf f,tor})} \mc H (\mf s_\psi, \vec{v}) 
\end{equation}
underlying the second arrow in \eqref{eq:5.14} consists of two parts. 
Firstly \eqref{eq:4.10} (which comes from the comparison of Weyl groups in Proposition 
\ref{prop:4.2}.b) and secondly the bijection
\begin{equation}\label{eq:5.16}
\bigoplus\nolimits_{\psi \in \Irr (\Omega_{\mf f,tor})} 
\mc O (\mf s_{L,\psi}^\vee) \to \bigoplus\nolimits_{\psi \in \Irr (\Omega_{\mf f,tor})} 
\mc O (\mf s_{L,\psi}) 
\end{equation}
induced by Theorem \ref{thm:4.1}. By Theorem \ref{thm:4.1}.i, \eqref{eq:5.16} is
$X_\Wr (G)$-equivariant, while \eqref{eq:4.10} is not affected by weakly unramified
characters. Therefore \eqref{eq:5.17} and the second arrow in \eqref{eq:5.14} are
$X_\Wr (G)$-equivariant. By Lemma \ref{lem:2.3} the third arrow in \eqref{eq:5.14}
is $X_\Wr (G)$-equivariant.
\end{proof}

\begin{lem}\label{lem:5.2} 
In Theorem \ref{thm:5.1} $\pi$ is supercuspidal if and only if $(\phi_\pi,\rho_\pi)$
is cuspidal. In that setting Theorem \ref{thm:5.1} agrees with the bijection
from \ref{thm:4.1} and \cite{FOS}.
\end{lem}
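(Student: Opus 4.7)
\emph{Proof plan.} First I would separate the two assertions. For the ``iff,'' observe that $\pi \in \Irr(G)_{\mf s_\psi}$ is supercuspidal iff the inertial class $\mf s_\psi$ is supported on a Levi equal to $G$, which by the construction $\mf s_\psi = [L_{\mf f},\mr{ind}_{N_{L_{\mf f}}(P_{L,\mf f})}^{L_{\mf f}}(\hat\sigma_i)]_G$ from Section \ref{sec:3} amounts to $L_{\mf f} = G$. Likewise $(\phi,\rho) \in \Phi_e(G)^{\mf s_\psi^\vee}$ is cuspidal iff the whole Bernstein component $\Phi_e(G)^{\mf s_\psi^\vee}$ consists of cuspidal parameters, i.e.\ iff $\mf s_\psi^\vee$ is a cuspidal inertial class (underlying Levi equal to $G$). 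Proposition \ref{prop:4.2}.a then finishes this part, since the bijection $\mf s_\psi \leftrightarrow \mf s_\psi^\vee$ preserves the underlying Levi subgroup.

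Now suppose $\mf s_\psi$ (equivalently $\mf s_\psi^\vee$) is cuspidal, so $L_{\mf f} = G$. By Theorem \ref{thm:3.1}.b the $p$-adic Hecke algebra collapses:
\[
\mc H(G, \hat P_{\mf f}, \hat\sigma \otimes \psi) \;=\; \mc H(L_{\mf f}, \hat P_{L,\mf f}, (\hat\sigma \otimes \psi)|_{\hat P_{L,\mf f}}) \;\cong\; \C[X_{\mf f}].
\]
Dually, \eqref{eq:2.11} gives $\mc H(\mf s_\psi^\vee, \vec{v}) = \mc O(\mf s_\psi^\vee)$. Both are commutative, with irreducibles given by characters, respectively points. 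Consequently the chain \eqref{eq:5.1} reduces in this case to
\[
\Irr(G)_{\mf s_\psi} \;\xrightarrow{\cong}\; \Irr(X_{\mf f}) \;\xrightarrow{\cong}\; \mf s_\psi^\vee \;=\; \Phi_e(G)^{\mf s_\psi^\vee},
\]
where the first map is Theorem \ref{thm:3.4}.b and the second is dual to the algebra isomorphism supplied by Theorem \ref{thm:4.5}, restricted to its commutative parts.

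It remains to check that this composite coincides with the bijection from Theorem \ref{thm:4.1} on $\Irr_{\cusp,\unip}(G) \cap \Irr(G)_{\mf s_\psi}$. By Theorem \ref{thm:3.1}.c (with $L_{\mf f} = G$) the first arrow sends $\chi \otimes i(\sigma)_\psi$, for $\chi \in X_\nr(G)/X_\nr(G,\sigma)$, to the character $\chi|_{X_{\mf f}}$ of $X_{\mf f}$. Inspecting the proof of Theorem \ref{thm:4.5}, the ``commutative-subalgebra part'' of the algebra isomorphism $\mc H(\mf s_\psi^\vee, \vec{v}) \to \mc H(G, \hat P_{\mf f}, \hat\sigma \otimes \psi)$ is precisely the composite
\[
\mc O(\mf s_{L,\psi}^\vee) \;\xrightarrow{\text{Thm \ref{thm:4.1}}}\; \mc O(\mf s_{L,\psi}) \;\xrightarrow{\text{Thm \ref{thm:3.1}.c}}\; \C[X_{\mf f}].
\]
Hence $\chi|_{X_{\mf f}}$ pulls back to the character of $\mc O(\mf s_\psi^\vee)$ given by evaluation at the point corresponding to $\chi \otimes i(\sigma)_\psi$ under the map of Theorem \ref{thm:4.1}; that is exactly the FOS assignment.

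The only real obstacle is bookkeeping: making sure that the extension $\hat\sigma$, the basepoint $i(\sigma)_\psi$ used to identify $\Irr(L_{\mf f})_{\mf s_{L,\psi}}$ with $\Irr(X_{\mf f})$, and the basepoint $\phi_L$ of $\mf s_L^\vee$ are chosen compatibly throughout the three arrows of \eqref{eq:5.1}. No new input is needed, because in the cuspidal case the Hecke algebras are their commutative ``torus parts,'' and those were already matched via the FOS bijection in the course of proving Theorem \ref{thm:4.5}.
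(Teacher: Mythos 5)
Your proposal is correct and follows essentially the same route as the paper: the ``iff'' is exactly Proposition \ref{prop:4.2}.a, and in the cuspidal case both sides of \eqref{eq:5.1} collapse to the commutative algebras $\C[X_{\mf f}]$ and $\mc O(\mf s_\psi^\vee)$ (via Theorem \ref{thm:3.1}.b and \eqref{eq:2.11}), with the composite identified with the bijection of Theorem \ref{thm:4.1} through \eqref{eq:4.9}. The basepoint ``bookkeeping'' you flag is precisely what the paper settles with the uniqueness statement in \eqref{eq:4.12}, so no further input is needed.
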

\begin{proof}
The first part follows directly from Proposition \ref{prop:4.2}.a.

In the cuspidal case the third map in \eqref{eq:5.1} reduces to 
$\Irr (\mc O (\mf s^\vee_\psi)) \leftrightarrow \mf s_\psi^\vee$, see \eqref{eq:2.11}.
By \eqref{eq:4.12} the first map in \eqref{eq:5.1} becomes the canonical bijection
$\Irr (G)_{\mf s_\psi} \leftrightarrow \Irr (\mc O (\mf s_\psi))$ and \eqref{eq:4.9} says 
the second map in \eqref{eq:5.1} is induced by $\mf s_{\psi}^\vee \leftrightarrow 
\mf s_{\psi}$ from Proposition \ref{prop:4.2}.a. Hence \eqref{eq:5.1} and 
Theorem \ref{thm:5.1} boil down to Theorem \ref{thm:4.1}.
\end{proof}

\begin{lem}\label{lem:5.3}
Let $\mf{Lev}(G)$ be a set of representatives of the Levi subgroups of $G$.
The cuspidal support maps and Theorem \ref{thm:5.1} make a commutative diagram
\[
\begin{array}{ccc}
\Irr_\unip (G) & \longrightarrow & \Phi_{\nr,e}(G) \\
\downarrow \mb{Sc} & & \downarrow \mb{Sc} \\
\bigsqcup\limits_{L \in \mf{Lev}(G)} \Irr_{\cusp,\unip}(L) \big/ N_G (L)  & 
\longrightarrow & \bigsqcup\limits_{L \in \mf{Lev}(G)} 
\Phi_{\nr,\cusp}(L) \big/ N_{G^\vee} (L^\vee \rtimes \mb W_K)
\end{array} .
\]
\end{lem}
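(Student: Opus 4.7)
\textbf{Proof plan for Lemma \ref{lem:5.3}.} The plan is to verify commutativity one pair of Bernstein components at a time. Fix $\mf s = \mf s_\psi \in \mf{Be}(G)_\unip$ corresponding to $\mf s^\vee = \mf s_\psi^\vee \in \mf{Be}^\vee(G)_\nr$ under Proposition \ref{prop:4.2}.a, with underlying cuspidal inertial classes $\mf s_{L,\psi}$ and $\mf s_{L,\psi}^\vee$ for a common Levi $L = L_{\mf f}$ of $G$. By definition of the cuspidal support maps on both sides, $\mb{Sc}$ sends $\Irr(G)_{\mf s_\psi}$ into the $W_{\mf s_\psi}$-orbits on $\Irr_{\cusp,\unip}(L)_{\mf s_{L,\psi}}$, and $\mb{Sc}$ sends $\Phi_e(G)^{\mf s_\psi^\vee}$ into the $W_{\mf s_\psi^\vee}$-orbits on $\mf s_{L,\psi}^\vee$. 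Via Proposition \ref{prop:4.2}.b, the natural isomorphism $W_{\mf s_\psi} \cong W_{\mf s_\psi^\vee}$ makes the bijection $\Irr(L)_{\mf s_{L,\psi}} \to \mf s_{L,\psi}^\vee$ from Theorem \ref{thm:4.1} equivariant. Thus it suffices to prove that the restriction of the bijection of Theorem \ref{thm:5.1} to $\Irr(G)_{\mf s_\psi}$ intertwines the two cuspidal support maps.

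Next I would reinterpret both cuspidal supports as central characters on the common affine Hecke algebra. On the $p$-adic side, Theorem \ref{thm:3.1}.a asserts that $(\hat P_{\mf f}, \hat \sigma \otimes \psi)$ is a cover of its restriction to $\hat P_{L,\mf f}$, so the diagram \eqref{eq:3.36} commutes and the equivalence of Theorem \ref{thm:3.4}.b intertwines normalized parabolic induction $I_P^G$ with Hecke-algebra induction from $\mc H(L, \hat P_{L,\mf f}, (\hat \sigma \otimes \psi)|_{\hat P_{L,\mf f}}) \cong \C[X_{\mf f}] \cong \mc O(\mf s_{L,\psi})$ (Theorem \ref{thm:3.1}.b together with \eqref{eq:4.12}). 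Therefore the $p$-adic Bernstein cuspidal support $\mb{Sc}(\pi) \in \mf s_{L,\psi}/W_{\mf s_\psi}$ is exactly the $W_{\mf s_\psi}$-orbit through which $\mc O(\mf s_{L,\psi}) = Z(\mc H(L,\hat P_{L,\mf f},\ldots))$ acts on $\mr{Hom}_{\hat P_{\mf f}}(\hat \sigma \otimes \psi, \pi)$; equivalently, the central character of this Hecke module.

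On the Galois side, the fourth bullet of Theorem \ref{thm:2.1} says that $\bar M(\phi,\rho,\vec v)$ occurs in $\mr{ind}_{\mc H(\mf s_L^\vee, \vec v)}^{\mc H(\mf s^\vee,\vec v)}(L,\chi_L\phi_L,\rho_L)$ where $(L,\chi_L \phi_L, \rho_L) = \mb{Sc}(\phi,\rho)$. Hence the $W_{\mf s_\psi^\vee}$-orbit of $(L,\chi_L \phi_L) \in \mf s_{L,\psi}^\vee$ is precisely the central character of $\bar M(\phi,\rho,\vec v)$ with respect to $\mc O(\mf s_{L,\psi}^\vee)^{W_{\mf s_\psi^\vee}} = Z(\mc H(\mf s_\psi^\vee, \vec v))$. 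The algebra isomorphism of Theorem \ref{thm:4.5} is, by construction, built out of \eqref{eq:4.9} (the identification of the commutative subalgebras via Theorem \ref{thm:4.1} on the cuspidal level) and \eqref{eq:4.10} (the identification of the Weyl group parts via Proposition \ref{prop:4.2}.b). In particular it sends the centre $\mc O(\mf s_{L,\psi}^\vee)^{W_{\mf s_\psi^\vee}}$ to $\mc O(\mf s_{L,\psi})^{W_{\mf s_\psi}}$, and does so via exactly the cuspidal bijection $\mf s_{L,\psi}^\vee/W_{\mf s_\psi^\vee} \to \mf s_{L,\psi}/W_{\mf s_\psi}$ from Theorem \ref{thm:4.1}. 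Chasing the central characters through the three arrows of \eqref{eq:5.1} therefore yields the required commutativity.

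The main obstacle is bookkeeping: one must check that the normalizations of the isomorphism $W_{\mf s_\psi} \cong W_{\mf s_\psi^\vee}$ coming from Proposition \ref{prop:4.2}.b, and the identification of tori coming from Theorem \ref{thm:4.1}, are the same ones used to build the Hecke algebra isomorphism of Theorem \ref{thm:4.5}; Lemma \ref{lem:3.3}.b and the construction in the proof of Lemma \ref{lem:4.6} are precisely what guarantee this consistency. Since Theorem \ref{thm:4.1} is only canonical up to weakly unramified twists, the resulting square commutes only after passing to $N_G(L)$-orbits on the $p$-adic side and $N_{G^\vee}(L^\vee \rtimes \mb W_K)$-orbits on the Galois side, which is exactly the form stated in the lemma.
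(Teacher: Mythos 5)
Your proposal is correct and follows essentially the same route as the paper: fix matching Bernstein components, use the last bullet of Theorem \ref{thm:2.1} on the Galois side, the cover property \eqref{eq:3.36} on the $p$-adic side, and the fact that the isomorphism of Theorem \ref{thm:4.5} restricts on the commutative subalgebras to the cuspidal-level bijection \eqref{eq:4.9} coming from Theorem \ref{thm:4.1}, with Proposition \ref{prop:4.2}.b making the bottom row well-defined. The only cosmetic difference is that the paper concludes by observing directly that $\pi$ is a constituent of $I^G_{LN}(\chi)$ with $\chi$ supercuspidal, so that $(L,\chi)=\mb{Sc}(\pi)$, rather than phrasing the comparison in terms of central characters.
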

\begin{proof}
In the bottom line the actions of $N_G (L)$ and $N_{G^\vee} (L^\vee \rtimes \mb W_K)$
factor through the finite group
\[
N_G (L) / L \cong  N_{G^\vee} (L^\vee \rtimes \mb W_K) /L^\vee . 
\]
In the proof of Proposition \ref{prop:4.2}.a we saw that Theorem \ref{thm:4.1} 
is equivariant for the actions of this group. Now Lemma \ref{lem:5.2} says that the 
bottom line of the diagram is well-defined.

Suppose that (up to $G^\vee$-conjugacy) $\mb{Sc}(\phi_\pi,\rho_\pi) = 
(\chi_L \phi_L, \rho_L)$, with $\chi_L \in X_\nr (L)$. By Theorem \ref{thm:2.1} 
$\bar M (\phi,\rho,\vec{v})$ is a constituent of 
$\mr{ind}_{\mc H (\mf s^\vee_L, \vec{v})}^{\mc H (\mf s^\vee, \vec{v})} 
(L, \chi_L \phi_L,\rho_L)$. Here $\mc H (\mf s^\vee_L, \vec{v})$ is embedded in
$\mc H (\mf s^\vee, \vec{v})$ as $\mc O (\mf s_L^\vee)$. By \eqref{eq:4.9} this 
corresponds to the subalgebra 
\[
\mc O (\mf s_L) \cong \mc H (L, \hat P_{\mf f} \cap L, 
(\hat \sigma \otimes \psi) |_{\hat P_{\mf f} \cap L} ) \quad \text{of} \quad
\mc H (G, \hat P_{\mf f}, \hat \sigma \otimes \psi). 
\]
Hence $\mr{Hom}_{\hat P_{\mf f}} (\hat \sigma \otimes \psi ,\pi)$ is a constituent of 
$\mr{ind}_{\mc H (L, \hat P_{\mf f} \cap L, (\hat \sigma \otimes \psi) |_{\hat P_{\mf f} 
\cap L} )}^{\mc H (G, \hat P_{\mf f}, \hat \sigma \otimes \psi)} (\chi)$,
where $\chi \in \Irr_{\cusp,\unip}(L)$ is the image of $(L, \chi_L \phi_L,\rho_L)$
under Theorem \ref{thm:4.1}. By \eqref{eq:3.36} $\pi$ is a constituent of
$I^G_{L N}(\chi)$, for a suitable parabolic subgroup $L N \subset G$ with Levi factor 
$N$. As $\chi$ is supercuspidal, this means that $(L,\chi)$ is the cuspidal support
of $\pi$ (up to $G$-conjugacy, as always).
\end{proof}

\begin{lem}\label{lem:5.4}
In Theorem \ref{thm:5.1} $\pi \in \Irr_\unip (G)$ is tempered if and only if 
$\phi_\pi \in \Phi_\nr (G)$ is bounded.
\end{lem}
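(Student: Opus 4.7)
The plan is to chain three compatibility statements that together yield the claim. First, by Theorem \ref{thm:3.4}(c), $\pi \in \Rep (G)_{\mf s_\psi}$ is tempered if and only if the $\mc H(G, \hat P_{\mf f}, \hat \sigma \otimes \psi)$-module $M_\pi := \mr{Hom}_{\hat P_{\mf f}}(\hat \sigma \otimes \psi, \pi)$ is tempered. Second, by Theorem \ref{thm:2.1}, the irreducible $\mc H(\mf s_\psi^\vee, \vec v)$-module $\bar M(\phi_\pi, \rho_\pi, \vec v)$ is tempered if and only if $\phi_\pi$ is bounded. It therefore suffices to show that the algebra isomorphism $\mc H(\mf s_\psi^\vee, \vec v) \cong \mc H(G, \hat P_{\mf f}, \hat \sigma \otimes \psi)$ from Theorem \ref{thm:4.5} sends tempered irreducible modules to tempered irreducible modules.

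Temperedness of an irreducible module over an (extended) affine Hecke algebra with parameters in $\R_{>1}$ depends only on the underlying based root datum and parameter function, via a Casselman-type criterion on the weights of the maximal commutative subalgebra. By Theorem \ref{thm:4.5}, the two algebras in question share canonically isomorphic based root data with matching parameters, and their maximal commutative subalgebras $\mc O(\mf s_{L,\psi})$ and $\mc O(\mf s_{L,\psi}^\vee)$ are identified via the bijection $\mf s_{L,\psi} \leftrightarrow \mf s_{L,\psi}^\vee$ induced by Theorem \ref{thm:4.1}. Hence it remains to check that this bijection sends the tempered locus of $\mf s_{L,\psi}$ (the unitary unramified twists of a fixed cuspidal unipotent $L$-representation) to the bounded locus of $\mf s_{L,\psi}^\vee$ (the bounded unramified twists of a fixed cuspidal L-parameter).

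For this last point, both $\mf s_{L,\psi}$ and $\mf s_{L,\psi}^\vee$ are principal homogeneous spaces for quotients of $X_\nr (L) \cong (Z(L^\vee)^{\mb I_K})^\circ_\Fr$, and by Theorem \ref{thm:4.1}(i) (in particular the $X_\nr (L)$-equivariance contained in the $X_\Wr (L)$-equivariance) the bijection between them is equivariant for this action. Under the identification $X_\nr (L) \cong (Z(L^\vee)^{\mb I_K})^\circ_\Fr$ recalled in \eqref{eq:4.15}, the unitary subgroup of $X_\nr (L)$ coincides with the maximal compact subgroup on the right, which by \eqref{eq:2.9} parametrizes precisely the bounded unramified twists of $\phi_L$. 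Consequently the two subtori match, and the algebra isomorphism from Theorem \ref{thm:4.5} does preserve temperedness, completing the proof.

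The main obstacle is the second step: one needs that temperedness of modules is an \emph{intrinsic} property of the pair (extended affine Hecke algebra, commutative subalgebra from the Bernstein presentation), depending only on the based root datum and the parameter function, so that it is transported faithfully across the isomorphism of Theorem \ref{thm:4.5}. This relies on the Casselman-type characterization of tempered modules; it is standard in the literature on affine Hecke algebras but must be invoked carefully, since a priori the notion of temperedness on the $G$-side is defined via matrix coefficients on $G$, and its translation to the Hecke-algebra side (via Theorem \ref{thm:3.4}(c)) is precisely of this intrinsic form.
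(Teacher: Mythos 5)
Your proposal is correct and follows essentially the same route as the paper: the paper's proof of Lemma \ref{lem:5.4} likewise chains the three maps of \eqref{eq:5.1}, citing Theorem \ref{thm:3.4}.c for the first, Theorem \ref{thm:4.5} for the second, and Theorem \ref{thm:2.1} for the third. Your extra discussion of why the isomorphism of Theorem \ref{thm:4.5} preserves temperedness (the Casselman-type criterion being intrinsic to the based root datum, parameters and the matching of the unitary/bounded loci of the tori) is a more detailed justification of the middle step, which the paper treats as immediate from the fact that the isomorphism comes from an isomorphism of based root data.
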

\begin{proof}
Recall that Theorem \ref{thm:5.1} was built from \eqref{eq:5.1}. By Theorem 
\ref{thm:3.4}.c the first map of \eqref{eq:5.1} (and its inverse), preserve 
temperedness. By Theorem \ref{thm:4.5} the same holds for the second map in 
\eqref{eq:5.1}. By Theorem \ref{thm:2.1} the third map in \eqref{eq:5.1} turns 
tempered representations into bounded (enhanced) L-parameters and conversely.
\end{proof}

\begin{lem}\label{lem:5.5}
In Theorem \ref{thm:5.1} $\pi \in \Irr_\unip (G)$ is essentially square-integrable
if and only if $\phi_\pi \in \Phi_\nr (G)$ is discrete.
\end{lem}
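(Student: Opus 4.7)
The plan is to mimic the structure of Lemma \ref{lem:5.4} exactly, propagating the relevant property through the three maps of \eqref{eq:5.1} with ``essentially discrete series'' serving as the intermediate notion on the affine Hecke algebra side. First I would invoke Theorem \ref{thm:3.4}.d to obtain that $\pi \in \Rep (G)_{\mf s_\psi}$ is essentially square-integrable if and only if $\mr{Hom}_{\hat P_{\mf f}}(\hat \sigma \otimes \psi,\pi)$ is an essentially discrete series $\mc H (G,\hat P_{\mf f},\hat \sigma \otimes \psi)$-module. Next I would transport this across the isomorphism $\mc H (\mf s_\psi^\vee,\vec v) \cong \mc H (G,\hat P_{\mf f},\hat \sigma \otimes \psi)$ from Theorem \ref{thm:4.5}: since that isomorphism arises from an isomorphism of based root data and matches the Bernstein presentations, it preserves the essentially-discrete-series property, so $\bar M (\phi_\pi,\rho_\pi,\vec v)$ is essentially discrete series over $\mc H (\mf s_\psi^\vee,\vec v)$. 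Finally I would apply the second bullet of Theorem \ref{thm:2.1}, which says that $\phi_\pi$ is discrete if and only if $\bar M (\phi_\pi,\rho_\pi,\vec v)$ is essentially discrete series \emph{and} $\mr{rk}\,\Phi_{\mf s^\vee_\psi} = \dim_\C (T_{\mf s^\vee_\psi} / X_\nr (G))$.

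The core task is therefore to show that the additional rank condition in Theorem \ref{thm:2.1} is automatic for every unipotent Bernstein component $\mf s^\vee_\psi$. Via the isomorphism of based root data \eqref{eq:4.8} supplied by Theorem \ref{thm:4.5}, the condition is equivalent to $\mr{rk}\,R_{\mf f} = \mr{rk}\,X_{\mf f} - \dim_\C X_\nr (G)$. Here \eqref{eq:3.27} already gives $\mr{rk}\,R_{\mf f} = \mr{rk}_K(\mc L_{\mf f} / Z(\mc G))$. Theorem \ref{thm:3.1}.c identifies $\Irr (X_{\mf f})$ as a complex torus isogenous to $X_\nr (L_{\mf f})/X_\nr (L_{\mf f},\sigma)$, yielding $\mr{rk}\,X_{\mf f} = \dim_\C X_\nr (L_{\mf f}) = \mr{rk}_K Z(L_{\mf f})^\circ$, while $\dim_\C X_\nr (G) = \mr{rk}_K Z(G)^\circ$. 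The standard Levi central-rank identity $\mr{rk}_K (\mc L_{\mf f} / Z(\mc G)) = \mr{rk}_K Z(L_{\mf f})^\circ - \mr{rk}_K Z(G)^\circ$ then closes the argument.

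The hardest part is really the bookkeeping in the second paragraph and the verification that Theorem \ref{thm:4.5}'s algebra isomorphism respects the essentially-discrete-series property. The latter is not an obstacle in substance, since essentially discrete series is intrinsic to the Bernstein presentation (it is a statement about the support of the central character on the algebra's lattice of weights); but it requires one to observe that Theorem \ref{thm:4.5} provides an isomorphism of based root data together with matching parameter functions, so the whole geometric structure governing tempered versus discrete series behaviour transports unchanged. Once that observation is in place, the rank bookkeeping is routine and the lemma follows by tracing the chain \eqref{eq:5.1}.
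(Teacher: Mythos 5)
Your argument is correct, and its skeleton is the paper's own: Theorem \ref{thm:3.4}.d for the first map in \eqref{eq:5.1}, Theorem \ref{thm:4.5} for the second (essentially discrete series is preserved because the isomorphism matches all the defining structure), and the second bullet of Theorem \ref{thm:2.1} for the third, with the extra rank condition as the real content. Where you genuinely diverge is in how the condition $\mr{rk}\,\Phi_{\mf s^\vee_\psi} = \dim_\C \big( T_{\mf s^\vee_\psi} / X_\nr (G) \big)$ is verified. You transport it to the $p$-adic side through the isomorphism of based root data \eqref{eq:4.8} and check $\mr{rk}\, R_{\mf f} = \mr{rk}\, X_{\mf f} - \dim_\C X_\nr (G)$, using \eqref{eq:3.27}, Theorem \ref{thm:3.1}.c (together with the finiteness of $X_\nr (L_{\mf f},\sigma)$, so that $\mr{rk}\, X_{\mf f} = \dim_\C X_\nr (L_{\mf f})$) and the standard identity $\mr{rk}\,\Phi (G, L_{\mf f}) = \dim_\C X_\nr (L_{\mf f}) - \dim_\C X_\nr (G)$ for Levi subgroups. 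The paper instead stays entirely on the Galois side: because $\phi_L$ is unramified, \cite[Proposition 3.9.b]{AMS3} identifies $\Phi_{\mf s^\vee_\psi}$, as a reduced root system, with that of the semisimple group $G^\vee_\Sc$ relative to $Z(L_c^\vee)^{\mb W_K,\circ}$, whose rank is the full dimension of that torus, and \cite[Lemma 3.7]{AMS3} converts this dimension into $\dim_\C \big( T_{\mf s^\vee_\psi} / X_\nr (G) \big)$. The trade-off: the paper's check is intrinsic to unramified parameters and independent of the Hecke-algebra comparison, while yours piggybacks on Theorem \ref{thm:4.5} (already needed for the second map anyway) and on building-theoretic bookkeeping; both close the gap, but your route carries the mild extra obligations of justifying the finiteness of $X_\nr (L_{\mf f},\sigma)$, the finiteness of the kernel of $X_\nr (G) \to T_{\mf s^\vee_\psi}$ implicit in computing $\dim_\C \big( T_{\mf s^\vee_\psi} / X_\nr (G) \big)$ as a difference of dimensions, and the Levi rank identity — all standard, so the proof stands.
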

\begin{proof}
Consider the chain of maps \eqref{eq:5.1}. By Theorem \ref{thm:3.4}.d the first
of those maps sends essentially square-integrable representations to essentially
discrete series modules. The second map preserves the essentially discrete series
property, becomes it comes from an isomorphism between all the structure defining
these affine Hecke algebras (Theorem \ref{thm:4.5}). 

For unramified enhanced L-parameters $(\phi_L,\rho_L)$, 
\cite[Proposition 3.9.b]{AMS3} says that
\[
R \big( G^\circ_{\phi_1}, Z(L_c^\vee)^{\mb W_K,\circ} \big)_{\mr{red}} = 
R \big( Z_{G_\Sc^\vee}(\phi_L |_{\mb I_K}), Z(L_c^\vee)^{\mb W_K,\circ} \big)_{\mr{red}} 
= R \big( G^\vee_\Sc, Z(L_c^\vee)^{\mb W_K,\circ} \big)_{\mr{red}} .
\]
As $G^\vee_\Sc$ is semisimple, the rank of this root system is 
$\dim_\C (Z(L_c^\vee)^{\mb W_K,\circ})$, which by \cite[Lemma 3.7]{AMS3} equals
\[
\dim_\C \big( Z(L^\vee)^{\mb W_K,\circ} / X_\nr (G) \big) = 
\dim_\C ( T_{\mf s^\vee} / X_\nr (G)) .
\]
Now we can apply Theorem \ref{thm:2.1}.e, which says that the first map in 
\eqref{eq:5.1} sends essentially discrete series modules to discrete enhanced
L-parameters.

These arguments also work for the inverses of the maps in \eqref{eq:5.1}.
\end{proof}

Recall from \cite[p. 20--23]{Lan} and \cite[\S 10.1]{Bor} that every $\phi (G)$
determines a character $\chi_\phi$ of $Z(G)$. For the construction, one first
embeds $\mc G$ in a connected reductive $K$-group $\overline{\mc G}$ with
$\mc G_\der = \overline{\mc G}_\der$, such that $Z(\overline{\mc G})$
is connected. Then one lifts $\phi$ to a L-parameter $\overline \phi$ for
$\overline G = \overline{\mc G}(K)$. The natural projection ${}^L \overline G \to
{}^L Z(\overline G)$ produces an L-parameter $\overline{\phi}_z$ for 
$Z(\overline G) = Z(\overline{\mc G})(K)$, and via the local Langlands correspondence
for tori $\overline{\phi}_z$ determines a character $\chi_{\overline \phi}$ of 
$Z (\overline G)$. Then $\chi_\phi$ is given by restricting $\chi_{\overline \phi}$
to $Z(G)$. Langlands \cite[p. 23]{Lan} checked that $\chi_\phi$ does not depend on
the choices made above.

\begin{lem}\label{lem:5.12}
In Theorem \ref{thm:5.1} the central character of $\pi$ equals the character 
$\chi_{\phi_\pi}$ of $Z(G)$ determined by $\phi_\pi$. This character is
unramified, that is, trivial on every compact subgroup of $Z(G)$.
\end{lem}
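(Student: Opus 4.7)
The plan is to reduce via the cuspidal support to the supercuspidal case, invoke \cite{FOS} there, and handle the unramifiedness directly from the projection to ${}^L Z(\mc G)$.

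First, by Lemma \ref{lem:5.3} I may pick a representative $(L, \pi'_L)$ of $\mb{Sc}(\pi)$ and a representative $(L, \phi'_L, \rho'_L)$ of $\mb{Sc}(\phi_\pi, \rho_\pi)$ that correspond under the supercuspidal bijection of Theorem \ref{thm:4.1} (cf.\ Lemma \ref{lem:5.2}). By the last bullet of Theorem \ref{thm:2.1} combined with the diagram \eqref{eq:3.36}, $\pi$ occurs as a subquotient of the normalized parabolic induction $I_P^G (\pi'_L)$ for a suitable parabolic $P = L U$. Since $Z(G) \subset Z(L)$ acts on the whole module $I_P^G (\pi'_L)$ through the character $\omega_{\pi'_L}|_{Z(G)}$, I obtain $\omega_\pi|_{Z(G)} = \omega_{\pi'_L}|_{Z(G)}$.

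Next, I would show the analogous Levi compatibility on the Galois side, $\chi_{\phi_\pi}|_{Z(G)} = \chi_{\phi'_L}|_{Z(G)}$. Following \cite{Bor}, I would choose a $z$-extension $\mc G \hookrightarrow \overline{\mc G}$ with $Z(\overline{\mc G})$ connected and $\overline{\mc G}_\der = \mc G_\der$, and set $\overline{\mc L} := Z(\overline{\mc G}) \cdot \mc L$; this is a Levi subgroup of $\overline{\mc G}$ with $Z(\overline{\mc L})$ connected and $\overline{\mc L}_\der = \mc L_\der$. Any lift $\overline{\phi}'_L$ of $\phi'_L$ to ${}^L \overline L$ gives, via ${}^L \overline L \hookrightarrow {}^L \overline G$, a lift of $\phi_\pi$; and the inclusion $Z(\overline{\mc G}) \hookrightarrow Z(\overline{\mc L})$ makes the projections ${}^L \overline G \to {}^L Z(\overline{\mc G})$ and ${}^L \overline L \to {}^L Z(\overline{\mc L}) \to {}^L Z(\overline{\mc G})$ agree on this lift. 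Via the LLC for tori this yields the desired equality on $Z(G)$. It then suffices to check $\omega_{\pi'_L} = \chi_{\phi'_L}$ on $Z(L)$ in the supercuspidal case; this is part of the content of \cite{FOS}, where the bijection is built so that the projection of the L-parameter to ${}^L Z(L)$ encodes the central character under the torus LLC. The hard part here is precisely this supercuspidal identity, since it is not listed among the properties (i)--(v) of Theorem \ref{thm:4.1} in the excerpt and must be extracted from the explicit construction in \cite{FOS}.

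Finally, for the unramifiedness, $\phi_\pi (w) = (1,w)$ for every $w \in \mb I_K$. Because $\overline{\mc G}$ can be chosen to split over an unramified extension of $K$, a lift $\overline{\phi}_\pi$ to ${}^L \overline G$ can be taken unramified as well; its composition with the projection ${}^L \overline G \to {}^L Z(\overline{\mc G})$ is then an unramified L-parameter for the torus $Z(\overline{\mc G})$. By the LLC for tori such a parameter corresponds to a character of $Z(\overline{\mc G})(K)$ trivial on the maximal compact subgroup, whose restriction $\chi_{\phi_\pi}$ to $Z(G)$ is therefore trivial on every compact subgroup.
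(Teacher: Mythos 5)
There are two genuine gaps. First, your Galois-side Levi compatibility rests on a false premise: you assert that a lift $\overline{\phi}'_L$ of the cuspidal-support parameter $\phi'_L$ gives, via ${}^L \overline L \hookrightarrow {}^L \overline G$, a lift of $\phi_\pi$. But the cuspidal support map of \cite{AMS1} genuinely changes the parameter: one only has $\phi'_L |_{\mb I_K} = \phi_\pi |_{\mb I_K}$ and $\phi'_L (\Fr) = \phi_\pi (\Fr)\, t$ with $t \in G^\vee_\der$ commuting with the image of $\phi_\pi$, and the restrictions to $SL_2 (\C)$ are completely different; the composite of $\phi'_L$ with ${}^L L \hookrightarrow {}^L G$ is in general not $G^\vee$-conjugate to $\phi_\pi$. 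The equality of central characters you want can be rescued, but only by the observation that the discrepancy ($t$ and the $SL_2$-image) lies in the derived group, hence dies under the projection used to define $\chi_\phi$ -- an argument you do not make, and which the paper carries out after first restricting to the maximal split central torus $Z(G)_s$ (using the surjection $G^\vee \to Z(\mc G)_s^\vee$ and $G^\vee_\der \subset \ker$).

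Second, you explicitly defer the supercuspidal identity $\omega_{\pi'_L} = \chi_{\phi'_L}$ on $Z(G)$ to "the explicit construction in \cite{FOS}" and flag it as the hard part; as it stands this is a citation of an unproved statement, not a proof, and your chain of equalities needs it on all of $Z(G)$, compact part included. The paper does not quote such a statement either; it does the work by a different route: it first shows both characters are trivial on $Z(G)_\cpt$ (on the group side because the cuspidal unipotent representation $\sigma$ is trivial on the centre of $\overline{P}_{\mf f}$, hence $Z(G)_\cpt \subset \ker \pi$; on the Galois side by your unramifiedness argument, which matches the paper's), so both are determined by their restrictions to $Z(G)_s$, and then verifies the equality there using the specific shape of the construction in \cite[(15.5)--(15.10)]{FOS}, namely $\pi = \pi' \otimes \chi$ with $\chi \in X_\nr (G)$ and $\phi_\pi = \phi_{\pi'} \hat\chi$, so that the image $\phi_s \in \Phi (Z(G)_s)$ is exactly the parameter of $\chi |_{Z(G)_s}$. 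Note also that in your plan the triviality of $\omega_\pi$ on compact subgroups of $Z(G)$ would only follow from the unproved supercuspidal identity, whereas the paper proves it directly from the type; filling your two gaps would essentially force you back onto the paper's argument.
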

\begin{proof}
The construction in \cite[p. 20--21]{Lan} can be executed such
that $\overline{\mc G}$, like $\mc G$, splits over an unramified extension of $K$.
As $\phi_\pi$ is unramified, it can be lifted to a $\overline \phi \in \Phi_\nr (\overline G)$.
Then $\overline{\phi}_z \in \Phi (Z(\overline G))$ is also unramified.
Since $Z(\overline{\mc G})$ is connected and splits over an unramified extension of $K$:
\[
\Phi_\nr (Z(\overline G)) = \big( (Z(\overline{\mc G})^\vee)^{\mb I_K} \big)_{\Fr} =
\big( Z(\overline{\mc G})^\vee \big)_{\Fr} = \big( Z(\overline{\mc G})^\vee \big)^\circ_{\Fr} 
= X_\nr (Z(\overline G)) .
\]
Hence $\chi_{\overline \phi} \in \Irr (Z(\overline G))$ is unramified. Then its restriction
$\chi_\phi$ factors through 
\begin{equation}\label{eq:5.15}
Z(G) / Z(G)_\cpt \cong X_* (Z(\mc G)^\circ (K)). 
\end{equation}
The cuspidal unipotent representation $\bar \sigma$ of $\overline{P_{\mf f}}$ is trivial
on the centre of $\overline{P_{\mf f}}$. Every compact subgroup of $Z(G)$ is contained
in the parahoric subgroup $P_{\mf f}$, so projects to a central subgroup of 
$\overline{P_{\mf f}}$. Hence the kernel of $\sigma \in \Irr (P_{\mf f})$ contains
$Z(G)_\cpt$. As $\pi \in \Rep (G)_{(P_{\mf f},\sigma)}$, this implies 
$Z(G)_\cpt \subset \ker (\pi)$. In other words, the central character $\chi_\pi$ of $\pi$
factors also through \eqref{eq:5.15}.

The lattice of $K$-rational cocharacters of $Z(\mc G)^\circ (K)$ can be identified with
the cocharacter lattice of the maximal $K$-split subtorus $Z(\mc G)_s$ of $Z(\mc G)^\circ$.
Thus $\chi_\pi$ and $\chi_\phi$ are determined by their restrictions to 
$Z (G)_s = Z(\mc G)_s (K)$, which both are unramified characters.

This means that, to prove the lemma, it suffices to compare the characters of $Z(G)_s$
determined by $\pi$ and by $\lambda_\pi$. The latter admits a more direct description than
$\chi_\phi$. Namely, the inclusion $Z(\mc G)_s \to \mc G$ has a dual surjection
$\mc G^\vee \to (Z(\mc G)_s)^\vee$. The image $\phi_s \in \Phi (Z(G)_s)$ of 
$\phi_\pi$ determines a character $\chi_{\phi_s}$, which equals the restriction of 
$\chi_{\phi_\pi}$ of $Z(G)_s$.

Now we reduce to the cuspidal case. It is clear that $\pi$ and $\mb{Sc}(\pi)$ have the same
$Z(G)$-character. Let us write 
\[
\mb{Sc}(\phi_\pi,\rho_\pi) = (L,\phi_L,\rho_L) \quad \text{with} \quad 
(\phi_L,\rho_L) \in \Phi_\cusp (L).
\]
From \cite[Lemma 7.6 and Definition 7.7]{AMS1} we see that $\phi_L |_{\mb I_K} =
\phi_\pi |_{\mb I_K}$ and $\phi_L (\Fr) = \phi_\pi (\Fr) t$, where $t \in G^\vee_\der$
commutes with the image of $\phi_\pi$. As 
\[
G^\vee_\der \subset \ker \big( G^\vee \to Z(\mc G)_s^\vee (\C) \big) ,
\]
$\phi_L$ and $\phi_\pi$ have the same image $\phi_s$ in $\Phi (Z(G)_s)$. So if we replace
$(\phi_\pi,\rho_\pi)$ by its cuspidal support $(L,\phi_L,\rho_L)$, we do not change the
$Z(G)_s$-character $\chi_{\phi_s}$. Although $\mb{Sc}(\phi_\pi,\rho_\pi)$ is determined 
only up to $G^\vee$-conjugacy, we may pick any representative for it, because conjugation
by elements of $G^\vee$ does not affect $\phi_s$. In view of this and Lemma \ref{lem:5.3}
we may assume that $\pi$ is supercuspidal. 

Now, as explained after (15.5) in \cite{FOS}, $\pi$ can be written as $\pi' \otimes \chi$
with $\pi' \in \Irr_\unip (G / Z(G)_s)$ and $\chi \in X_\nr (G)$. Clearly
$\chi_\pi |_{Z(G)_s} = \chi |_{Z(G)_s}$. The construction in \cite[(15.6) and (15.10)]{FOS}
says that $(\phi_\pi,\rho_\pi) = (\phi_{\pi'} \hat \chi , \rho_\pi)$, where $\hat \chi \in
Z(\mc G^\vee)^\circ_\Fr$ is the image of $\chi$. We see that $\phi_s$ equals the L-parameter
of $\chi |_{Z(G)_s}$ and hence
\[
\chi_\phi |_{Z(G)_s} = \chi_{\phi_s} = \chi |_{Z(G)_s} = \chi_\pi |_{Z(G)_s} .
\]
As discussed above, this implies that $\chi_\phi = \chi_\pi$ on $Z(G)$.
\end{proof}

Let $\mc Q \mc U_Q$ be a parabolic $K$-subgroup of $\mc G$, with unipotent radical $\mc U_Q$
and Levi factor $\mc Q$. Suppose that $\phi \in \Phi (G)$ factors via ${}^L Q$. Then we
can compare representations of $G$ and $Q$ associated to enhancements of $\phi$, via 
normalized parabolic induction. Let $p_\zeta$ and $p_{\zeta^Q}$ be as in \eqref{eq:2.13}. 
By \cite[Theorem 7.10.a]{AMS1} there is a natural injection
\[
p_{\zeta^Q} \C [\mc S_\phi^Q] \to p_\zeta \C [\mc S_\phi] .
\]
This enables us to retract $G$-relevant enhancements of $\phi$ to representations of $\mc S_\phi^Q$.

\begin{lem}\label{lem:5.15}
Let $\phi \in \Phi_\nr (Q)$.
\enuma{
\item Suppose that the function $\epsilon_{u_\phi ,j}(\phi (\Fr_K) \phi_b (\Fr_K)^{-1} ,\vec{v})$
from \eqref{eq:2.14} and \eqref{eq:2.15} is nonzero. Let $\rho \in \Irr (\mc S_\phi)$ be $G$-relevant 
and let $\rho^Q \in \Irr (\mc S_\phi^Q)$ be $Q$-relevant. 

Then the multiplicity of $\pi (\phi,\rho) \in \Irr_\unip (G)$ as a constituent of 
$I_{Q U_Q}^G \pi (\phi,\rho^Q)$ is $[ \rho^Q : \rho ]_{\mc S_\phi^Q}$. 
It already appears that many times as a quotient. 
\item Let $(\phi,\rho^Q) \in \Phi_{\nr,e}(Q)$ be bounded. Then
\[
I_{Q U_Q}^G \pi (\phi,\rho^Q) \cong 
\bigoplus\nolimits_\rho \mr{Hom}_{\mc S_\phi^Q} (\rho^Q ,\rho) \otimes \pi (\phi,\rho) ,
\]
where the sum runs over all $\rho \in \Irr (\mc S_\phi)$ with 
$\mb{Sc}(\phi,\rho) = \mb{Sc}(\phi,\rho^Q )$.
}
\end{lem}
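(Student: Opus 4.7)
Proof plan for Lemma \ref{lem:5.15}:

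The plan is to transport the whole statement across the chain of equivalences \eqref{eq:5.1} into a statement about parabolic induction of modules over the affine Hecke algebras $\mc H(\mf s^\vee,\vec v)$ and $\mc H(\mf s^\vee_Q,\vec v)$, and then invoke the structural results about their standard modules from \cite{AMS3}. Choose the unipotent type $(P_{\mf f},\sigma)$ and character $\psi$ so that $\pi(\phi,\rho^Q)$ lies in $\Irr(Q)_{\mf s_{Q,\psi}}$; let $\mf s_\psi$ be the corresponding component for $G$. By the covers diagram \eqref{eq:3.36} together with Theorem \ref{thm:4.5}, the functor $I^G_{Q U_Q}$ transfers under the equivalence of Theorem \ref{thm:5.1} to the induction functor $\mc H(\mf s^\vee_\psi,\vec v)\otimes_{\mc H(\mf s^\vee_{Q,\psi},\vec v)}(-)$. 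In particular, $I^G_{Q U_Q}\pi(\phi,\rho^Q)$ matches the $\mc H(\mf s^\vee_\psi,\vec v)$-module $\mc H(\mf s^\vee_\psi,\vec v)\otimes_{\mc H(\mf s^\vee_{Q,\psi},\vec v)}\bar M^Q(\phi,\rho^Q,\vec v)$.

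For part (b), since $\phi$ is bounded Lemma \ref{lem:2.3}(c) gives $\bar M^Q(\phi,\rho^Q,\vec v)=\bar E^Q(\phi,\rho^Q,\vec v)$ and likewise $\bar M(\phi,\rho,\vec v)=\bar E(\phi,\rho,\vec v)$ for each enhancement $\rho$ of $\phi$ with the same cuspidal support. The key input is then the decomposition
\[
\mc H(\mf s^\vee_\psi,\vec v)\underset{\mc H(\mf s^\vee_{Q,\psi},\vec v)}{\otimes}\bar E^Q(\phi,\rho^Q,\vec v)\;\cong\;\bigoplus_{\rho}\mr{Hom}_{\mc S_\phi^Q}(\rho^Q,\rho)\otimes\bar E(\phi,\rho,\vec v),
\]
where the sum is over $\rho$ with $\mb{Sc}(\phi,\rho)=\mb{Sc}(\phi,\rho^Q)$. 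This is built into the construction of standard modules in \cite[\S 3]{AMS3}: both $\bar E(\phi,\rho,\vec v)$ and $\bar E^Q(\phi,\rho^Q,\vec v)$ are obtained as $\rho$- and $\rho^Q$-isotypic components of a ``big'' equivariant (co)homology module on which $\mc S_\phi^Q\hookrightarrow\mc S_\phi$ acts, and the induction from $\mc H(\mf s^\vee_{Q,\psi},\vec v)$ to $\mc H(\mf s^\vee_\psi,\vec v)$ precisely implements Frobenius reciprocity along $\mc S_\phi^Q\subset\mc S_\phi$. This gives the stated direct sum decomposition of $I^G_{Q U_Q}\pi(\phi,\rho^Q)$.

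For part (a), the nonvanishing condition \eqref{eq:2.14} is exactly what is needed (as in the proof of Proposition \ref{prop:2.4}(a), via \cite[Lemma 3.16.a]{AMS3}) to ensure that the same branching identity holds in general with $\bar E$ replacing $\bar M$. Without temperedness one no longer has $\bar M=\bar E$, but $\bar M(\phi,\rho,\vec v)$ is still the unique irreducible quotient of $\bar E(\phi,\rho,\vec v)$, and every irreducible constituent of $\bar E(\phi,\rho,\vec v)$ has central character compatible with that of $\bar M(\phi,\rho,\vec v)$. Consequently the multiplicity of $\bar M(\phi,\rho,\vec v)$ as a composition factor (and hence, by the quotient structure, as a quotient) of the induced module equals the multiplicity of $\bar E(\phi,\rho,\vec v)$ as a summand of $\mc H(\mf s^\vee_\psi,\vec v)\otimes_{\mc H(\mf s^\vee_{Q,\psi},\vec v)}\bar E^Q(\phi,\rho^Q,\vec v)$, which is $[\rho^Q:\rho]_{\mc S_\phi^Q}$.

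The main obstacle is establishing the branching identity in Step 3 cleanly: one has to track the action of $\mc S_\phi^Q$ through the geometric construction of the standard modules in \cite{AMS3} — in particular, that the transition from the ``big'' $\mc H(\mf s^\vee_{Q,\psi},\vec v)$-module to the ``big'' $\mc H(\mf s^\vee_\psi,\vec v)$-module is compatible with the forgetful restriction of enhancements from $\mc S_\phi$ to $\mc S_\phi^Q$. Once this compatibility is in hand, Frobenius reciprocity for the finite subgroup $\mc S_\phi^Q\subset\mc S_\phi$ converts the multiplicity $\dim\mr{Hom}_{\mc S_\phi^Q}(\rho^Q,\rho)$ into the multiplicity $[\rho^Q:\rho]_{\mc S_\phi^Q}$ claimed in both parts of the lemma.
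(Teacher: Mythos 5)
Your transfer steps (Theorem \ref{thm:3.4}.b, the covers diagram \eqref{eq:3.36}, Theorem \ref{thm:4.5}) and your treatment of part (b) — identify $\bar M$ with $\bar E$ via Lemma \ref{lem:2.3}.c and then invoke the standard-module branching identity, i.e. \cite[Lemma 3.16.a]{AMS3} — coincide with what the paper does. The problem is part (a). The paper disposes of it by citing \cite[Lemma 3.16.b]{AMS3}, which is a statement directly about the \emph{irreducible} modules: under the nonvanishing of $\epsilon_{u_\phi,j}$, the multiplicity of $\bar M (\phi,\rho,\vec{v})$ in $\mr{ind}_{\mc H (\mf s^\vee_{Q,\psi},\vec{v})}^{\mc H (\mf s^\vee_\psi,\vec{v})} \bar M^Q (\phi,\rho^Q,\vec{v})$ is $[\rho^Q:\rho]_{\mc S_\phi^Q}$, and it occurs that often as a quotient. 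You instead try to rederive this from the decomposition of $\mr{ind}\,\bar E^Q (\phi,\rho^Q,\vec{v})$ into standard modules, and this derivation has two genuine gaps.

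First, the module you must analyse in part (a) is $\mr{ind}\,\bar M^Q(\phi,\rho^Q,\vec{v})$, not $\mr{ind}\,\bar E^Q(\phi,\rho^Q,\vec{v})$: part (a) concerns precisely the case where $\phi$ need not be bounded, so $\bar M^Q$ is in general a proper quotient of $\bar E^Q$, and the two inductions differ. Right-exactness of induction only makes $\mr{ind}\,\bar M^Q$ a quotient of $\mr{ind}\,\bar E^Q$, which gives at best an upper bound on multiplicities and says nothing about how many copies of $\bar M(\phi,\rho,\vec{v})$ survive as quotients of the smaller module. Second, even for $\mr{ind}\,\bar E^Q$ your multiplicity count is not justified by the argument you give: all the standard modules $\bar E(\phi,\rho',\vec{v})$ occurring in the decomposition share the same $\phi$ and hence the same central character, so ``compatibility of central characters'' cannot rule out that $\bar M(\phi,\rho,\vec{v})$ appears as a composition factor of $\bar E(\phi,\rho',\vec{v})$ for some $\rho'\neq\rho$. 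What is actually needed here is the triangularity of the standard-versus-irreducible multiplicity matrix (that $\bar M(\phi',\rho')$ occurs in $\bar E(\phi,\rho)$ only for strictly ``larger'' parameters, and for the same $\phi$ only when $\rho'=\rho$), as in \cite[Theorem 3.20]{AMS2}/\cite{AMS3} — or, more simply, the direct statement \cite[Lemma 3.16.b]{AMS3} that the paper uses. As written, your part (a) does not close; part (b) and the transfer to $\Rep (G)_{\mf s_\psi}$ are fine and agree with the paper's proof.
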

\begin{proof}
(a) By \cite[Lemma 3.16.b]{AMS3} this holds for the modules $\bar M (\phi,\rho,\vec{v})$ and
$\mr{ind}_{\mc H (\mf s^\vee_{Q,\psi}, \vec{v}) }^{\mc H (\mf s^\vee_\psi,\vec{v})} 
\bar M (\phi,\rho^Q,\vec{v})$. Theorem \ref{thm:4.5} transfers that to a statement about modules
for $\mc H (G,\hat P_{\mf f}, \hat \sigma \otimes \psi)$. By Theorem \ref{thm:3.4}.b and \eqref{eq:3.36}
it becomes the desired statement about elements of Rep$(G)_{\mf s_\psi}$.\\
(b) By Lemma \ref{lem:2.3}.c $\bar M (\phi,\rho,\vec{v})$ and $\bar{M}^Q (\phi,\rho^Q,\vec{v})$ are
equal to the standard modules with the same parameters. Knowing that, \cite[Lemma 3.16.a]{AMS3}
gives the desired statement for $\mc H (\mf s^\vee_\psi,\vec{v})$-modules. 
As in the proof of part (a), that can be transferred to elements of Rep$(G)_{\mf s_\psi}$.
\end{proof}

Finally we work out the compatibility of Theorem \ref{thm:5.1} with the Langlands classification
for representations of reductive $p$-adic groups \cite{Kon,Ren}. We briefly recall the statement.

For every $\pi \in \Irr (G)$ there exists a triple $(P,\tau,\nu)$, unique up to $G$-conjugation, 
such that: 
\begin{itemize}
\item $P$ is a parabolic subgroup of $G$;
\item $\tau \in \Irr (P/U_P)$ is tempered, where $U_P$ denotes the unipotent radical of $P$;
\item the unramified character $\nu : P/U_P \to \R_{>0}$ is strictly positive
with respect to $P$;
\item $\pi$ is the unique irreducible quotient of $I_P^G (\tau \otimes \nu)$.
\end{itemize}
The Langlands classification for (enhanced) L-parameters \cite{SiZi} was already discussed before
Proposition  \ref{prop:2.4} -- we use the notations from over there.

\begin{lem}\label{lem:5.16}
Let $(\phi,\rho) \in \Phi_{\nr,e}(G)$ and let $(Q U_Q,\phi_b,z)$ be the triple associated to
$\phi$ by \cite[Theorem 4.6]{SiZi}. Recall from \eqref{eq:2.13} that $\rho$ can also be considered
as enhancement of $\phi$ or $\phi_b$ as L-parameters for $Q$.
\enuma{
\item $\pi (\phi,\rho)$ is the unique irreducible quotient of $I_{Q U_Q}^G \pi^Q (\phi,\rho)$.
\item $\pi^Q (\phi,\rho) = z \otimes \pi^Q (\phi_b, \rho)$ with $\pi^Q (\phi_b, \rho) \in \Irr_\unip (Q)$
tempered and $z \in X_\nr (Q)$ strictly positive with respect to $Q U_Q$.
The data for $\pi (\phi,\rho)$ in the Langlands classification for Rep$(G)$ are
$(Q U_Q, \pi^Q (\phi_b ,\rho), z)$, up to $G$-conjugacy.
}
\end{lem}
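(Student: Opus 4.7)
The plan is to translate both statements to the Hecke algebra side, where the Langlands classification takes the form given by Proposition \ref{prop:2.4}, and then transfer back using the equivalence from Theorem \ref{thm:3.4}.b together with the algebra isomorphism of Theorem \ref{thm:4.5}. Throughout, let $\mf s_\psi$ be the Bernstein component containing $\pi(\phi,\rho)$ and $\mf s^\vee_\psi$ its image under Proposition \ref{prop:4.2}.a; similarly let $\mf s_{Q,\psi}$ and $\mf s^\vee_{Q,\psi}$ be the components for $Q$ through which $\phi$ and $\phi_b$ factor.

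I would prove part (b) first, since it is mostly a matter of identifying the data. Since $\phi_b$ is bounded as an L-parameter for $Q$, Lemma \ref{lem:5.4} applied to $Q$ gives that $\pi^Q(\phi_b,\rho) \in \Irr_\unip (Q)$ is tempered. The class $z \in X_\nr (Q)$ given by the Langlands classification for L-parameters \cite[Theorem 4.6]{SiZi} is strictly positive with respect to $Q U_Q$ by construction, and under the embedding $X_\nr (Q) \hookrightarrow H^1 (\mb W_K, Z(Q^\vee))$ its action sends $(\phi_b, \rho)$ to $(z \phi_b, \rho) = (\phi, \rho)$. Then Lemma \ref{lem:5.11} for $Q$ (applied to the unramified character $z$) yields $\pi^Q(\phi,\rho) = z \otimes \pi^Q(\phi_b,\rho)$. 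This provides the Langlands triple $(Q U_Q, \pi^Q (\phi_b ,\rho), z)$ once part (a) is proved, and the uniqueness part of the representation-theoretic Langlands classification will then force agreement up to $G$-conjugacy.

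For part (a), the key input is Proposition \ref{prop:2.4}.b, which (using Lemma \ref{lem:2.3}.c to identify $\bar E^Q$ with $\bar M^Q$ in the bounded case) says that $\bar M (\phi,\rho,\vec v)$ is the unique irreducible quotient of
\[
\mc H (\mf s^\vee_\psi,\vec v) \underset{\mc H (\mf s^\vee_{Q,\psi}, \vec v)}{\otimes} \bar M^Q (\phi,\rho,\vec v).
\]
Now transfer this statement across the isomorphism of Theorem \ref{thm:4.5}: the resulting equality of Hecke algebras of types sends the induced module on the right to an induced module
\[
\mr{ind}_{\mc H (Q, \hat P_{Q,\mf f}, \hat \sigma \otimes \psi|_{\hat P_{Q,\mf f}})}^{\mc H (G, \hat P_{\mf f}, \hat \sigma \otimes \psi)}\!\! \mr{Hom}_{\hat P_{Q,\mf f}} \!\bigl( \hat \sigma \otimes \psi, \pi^Q (\phi,\rho)\bigr) .
\]
By the commutative diagram \eqref{eq:3.36} (i.e.\ the compatibility of covers with normalized parabolic induction), this corresponds under Theorem \ref{thm:3.4}.b to $I_{Q U_Q}^G \pi^Q (\phi,\rho)$. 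Hence $\pi (\phi,\rho)$ is the unique irreducible quotient of $I_{Q U_Q}^G \pi^Q (\phi,\rho)$, as required.

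The main obstacle I anticipate is not any single step but rather the bookkeeping needed to check that the positivity/temperedness conditions genuinely correspond on both sides. Concretely, one must verify that "strictly positive with respect to $Q U_Q$" in the sense of \cite[Theorem 4.6]{SiZi} transports, via the identification $X_\nr (Q) \cong T_{\mf s^\vee_{Q,\psi}}^{\text{unr}}$ coming from \eqref{eq:2.12}, to the strict positivity condition on characters of $X^*(T_{\mf s^\vee_L})$ appearing in the affine-Hecke-algebra Langlands classification (Proposition \ref{prop:2.4}.c), and further to strict positivity of $z : Q/U_Q \to \R_{>0}$ on the representation side. That this holds is built into Proposition \ref{prop:2.4}.c (where the positive cone associated to $Z_{\mc Q^\vee_{\Sc}}(\phi |_{\mb I_K})^\circ$ is shown to match the cone coming from $P = Q U_Q$), and the apparatus of covers combined with Theorem \ref{thm:3.1} ensures the resulting positivity on the $G$-side is the usual one. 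Once this matching of cones is noted, parts (a) and (b) then combine to identify $(Q U_Q, \pi^Q (\phi_b ,\rho), z)$ as the Langlands triple for $\pi (\phi,\rho)$, up to $G$-conjugacy.
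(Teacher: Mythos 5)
Your proposal is correct and follows essentially the same route as the paper: part (a) via Proposition \ref{prop:2.4} transferred through Theorem \ref{thm:4.5}, Theorem \ref{thm:3.4}.b and \eqref{eq:3.36}, and part (b) via Lemmas \ref{lem:5.4} and \ref{lem:5.11} plus uniqueness of Langlands data. The only small point the paper makes explicit that you leave implicit is that $\vec{v}\in\R_{>1}^d$ (by Lemma \ref{lem:4.6} and the proof of Theorem \ref{thm:4.5}), which is needed to invoke Proposition \ref{prop:2.4} at all.
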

\begin{proof}
(a) By Lemma \ref{lem:4.6} (and the proof of Theorem \ref{thm:4.5}) $\vec{v} \in \R_{>1}^d$. Thus
we may apply Proposition \ref{prop:2.4}, which says that the analogous statement for 
$\mc H (\mf s^\vee, \vec{v})$-modules holds. With Theorem \ref{thm:4.5}, Theorem \ref{thm:3.4}.b
and \eqref{eq:3.36} we transfer that to Rep$(G)_{\mf s_\psi}$.\\
(b) The first part follows from Lemmas \ref{lem:5.11} and \ref{lem:5.4}. The second part is a 
consequence of the uniqueness (up to conjugacy) of the Langlands data of $\pi (\phi,\rho)$.
\end{proof}


\begin{thebibliography}{99}

\bibitem[Art]{Art} J. Arthur, ``A note on L-packets",
Pure Appl. Math. Quaterly {\bf 2.1} (2006), 199--217

\bibitem[ABPS]{ABPSLLC} A.-M. Aubert, P. Baum, R. Plymen, M. Solleveld,
``Conjectures about $p$-adic groups and their noncommutative geometry",
Contemp. Math. {\bf 691} (2017), 15--51

\bibitem[AMS1]{AMS1} A.-M. Aubert, A. Moussaoui, M. Solleveld,
``Generalizations of the Springer correspondence and cuspidal Langlands parameters",
 Manus. Math. (2018), 1--72, DOI: 10.1007/s00229-018-1001-8

\bibitem[AMS2]{AMS2} A.-M. Aubert, A. Moussaoui, M. Solleveld,
``Graded Hecke algebras for disconnected reductive groups",
arXiv:1607.02713, in: \emph{Geometric aspects of the trace formula}, 
W. M\"uller, S. W. Shin, N. Templier (eds.), Simons Symposia, Springer, 2018

\bibitem[AMS3]{AMS3} A.-M. Aubert, A. Moussaoui, M. Solleveld,
``Affine Hecke algebras for Langlands parameters"
arXiv:1701.03593, 2017

\bibitem[BeDe]{BeDe} J. Bernstein, P. Deligne,
``Le "centre" de Bernstein",
pp. 1--32 in: \emph{Repr\'esentations des groupes r\'eductifs sur un corps local},
Travaux en cours, Hermann, Paris, 1984

\bibitem[BrTi]{BrTi2} F. Bruhat, J. Tits,
``Groupes r\'eductifs sur un corps local: II. Sch\'emas en groupes.
Existence d'une donn\'ee radicielle valu\'ee'',
Publ. Math. Inst. Hautes \'Etudes Sci. {\bf 60} (1984), 5--184

\bibitem[Bor]{Bor} A. Borel, 
``Automorphic L-functions", 
Proc. Symp. Pure Math {\bf 33.2} (1979), 27--61

\bibitem[BuKu]{BuKu} C.J. Bushnell, P.C. Kutzko,  
``Smooth representations of reductive $p$-adic groups: structure theory via types",
Proc. London Math. Soc. {\bf 77.3} (1998), 582--634

\bibitem[DeOp]{DeOp} P. Delorme, E.M. Opdam,
``The Schwartz algebra of an affine Hecke algebra",
J. reine angew. Math. \textbf{625} (2008), 59--114

\bibitem[FeOp]{FeOp} Y. Feng, E. Opdam,
``On a uniqueness property of cuspidal unipotent representations", 
arXiv:1504.03458, 2017

\bibitem[FOS]{FOS} Y. Feng, E. Opdam, M. Solleveld,
``Supercuspidal unipotent representations: L-packets and formal degrees", 
arXiv:1805.01888, 2018

\bibitem[Hai]{Hai} T.J. Haines,
``The stable Bernstein center and test functions for Shimura varieties'',
pp. 118--186 in: \emph{Automorphic forms and Galois representations},
London Math. Soc. Lecture Note Ser. {\bf 415}, Cambridge University Press, 2014

\bibitem[HII]{HII} K. Hiraga, A. Ichino, T. Ikeda,
``Formal degrees and adjoint $\gamma$-factors'',
J. Amer. Math. Soc. {\bf 21.1} (2008), 283--304
and correction J. Amer. Math. Soc. {\bf 21.4} (2008), 1211--1213

\bibitem[HiSa]{HiSa} K. Hiraga, H. Saito, 
``On L-packets for inner forms of $SL_n$",
Mem. Amer. Math. Soc. {\bf 1013}, Vol. {\bf 215} (2012)

\bibitem[Kon]{Kon} T. Konno,
``A note on the Langlands classification and irreducibility
of induced representations of $p$-adic groups,
Kyushu J. Math. {\bf 57} (2003), 383--409

\bibitem[Lan]{Lan} R.P. Langlands,
``On the classification of irreducible representations of real algebraic groups",
pp. 101--170 in: \emph{Representation theory and harmonic analysis on semisimple Lie groups}, 
Math. Surveys Monogr. {\bf 31}, American Mathematical Society, 1989

\bibitem[Lus1]{Lus-Che} G. Lusztig,
\emph{Representations of finite Chevalley groups},
Regional conference series in mathematics {\bf 39}
American Mathematical Society, 1978

\bibitem[Lus2]{LusCusp} G. Lusztig,
``Cuspidal local systems and graded Hecke algebras",
Publ. Math. Inst. Hautes \'Etudes Sci. {\bf 67} (1988), 145--202

\bibitem[Lus3]{Lus-Gr} G. Lusztig,
``Affine Hecke algebras and their graded version'',
J. Amer. Math. Soc {\bf 2.3} (1989), 599--635

\bibitem[Lus4]{LusUni1} G. Lusztig,
``Classification of unipotent representations of simple $p$-adic groups",
Int. Math. Res. Notices {\bf 11} (1995), 517--589

\bibitem[Lus5]{LusUni2} G. Lusztig,
``Classification of unipotent representations of simple $p$-adic groups II",
Represent. Theory {\bf 6} (2002), 243--289

\bibitem[Mor1]{Mor1} L. Morris,
``Tamely ramified intertwining algebras",
Invent. Math. {\bf 114.1} (1993), 1--54

\bibitem[Mor2]{Mor2} L. Morris,
``Level zero G-types",
Compositio Math. {\bf 118.2} (1999), 135--157

\bibitem[MoPr]{MoPr2} A. Moy, G. Prasad,
``Jacquet functors and unrefined minimal K-types",
Comment. Math. Helvetici {\bf 71} (1996), 98--121

\bibitem[Opd]{Opd18} E.M. Opdam,
``Affine Hecke algebras and the conjectures of Hiraga, Ichino and Ikeda on the Plancherel density",
arXiv:1807.10232, 2018

\bibitem[Ren]{Ren} D. Renard,
\emph{Repr\'esentations des groupes r\'eductifs p-adiques}, 
Cours sp\'ecialis\'es {\bf 17}, Soci\'et\'e Math\'ematique de France, 2010

\bibitem[SiZi]{SiZi} A.J. Silberger, E.-W. Zink,
``Langlands Classification for L-Parameters",
arXiv:1407.6494, 2017

\bibitem[Sol1]{SolAHA} M. Solleveld,
``On the classification of irreducible representations of 
affine Hecke algebras with unequal parameters",
Representation Theory {\bf 16} (2012), 1--87

\bibitem[Sol2]{SolComp} M. Solleveld,
``On completions of Hecke algebras'',
arXiv:1612.01776v2, (to appear in the proceedings of 
\emph{Representation theory of p-adic groups}, Pune, 2017)

\bibitem[Spr]{Spr} T.A. Springer,
\emph{Linear algebraic groups 2nd ed.},
Progress in Mathematics {\bf 9}, Birkh\"auser, 1998


\end{thebibliography}
\end{document}